\crefname{equation}{}{}
\crefname{enumi}{}{}
\setlist[enumerate]{label=(\arabic{enumi})}
\numberwithin{equation}{section}
\newtheorem{thm}{Theorem}
\numberwithin{thm}{section}
\newtheorem{prop}[thm]{Proposition}
\newtheorem{cor}[thm]{Corollary}
\newtheorem{lem}[thm]{Lemma}
\theoremstyle{definition}
\newtheorem{defn}[thm]{Definition}
\theoremstyle{remark}
\newtheorem{rem}[thm]{Remark}
\newtheorem{ex}[thm]{Example}
\newtheorem{prob}[thm]{Problem}
\newcommand{\aut}[1]{\text{\rm aut}_1({#1})}
\newcommand{\K}{{\mathbb K}}
\newcommand{\C}{{\mathbb C}}
\newcommand{\Q}{{\mathbb Q}}
\renewcommand{\ker}{\operatorname{Ker}}
\newcommand{\im}{\operatorname{Im}}
\newcommand{\SpaceModel}{\wedge V}
\newcommand{\HochschildModel}{\mathcal L}
\newcommand{\cyclicModel}{\mathcal E}
\newcommand{\ev}{{\rm ev}}
\newcommand{\ad}{{\rm ad}}
\newcommand{\pr}{{\rm pr}}
\newcommand{\inc}{{\rm inc}}
\newcommand{\Der}{\operatorname{Der}}
\newcommand{\Hom}{\operatorname{Hom}}
\newcommand{\End}{\operatorname{End}}
\newcommand{\Map}{\operatorname{Map}}
\newcommand{\pd}{\operatorname{pd}}
\newcommand{\PD}{\operatorname{PD}}
\newcommand{\cyclic}[1]{CC(#1)}
\newcommand{\dcyc}{d_u}
\newcommand{\cycl}{\overline{L}} 
\renewcommand{\deg}[1]{\mathopen\vert #1\mathclose\vert}
\newcommand{\mixed}{{\mathcal C}}
\newcommand{\done}{d}
\newcommand{\dtwo}{B}
\newcommand{\G}{\mathfrak g}
\newcommand{\dG}{\delta}
\renewcommand{\H}{\mathfrak h}
\newcommand{\lie}{\mathfrak{g}}
\newcommand{\dlie}{\delta}
\newcommand{\ccl}{L} 
\newcommand{\cce}{e} 
\newcommand{\ccs}{S} 
\newcommand{\cct}{T} 
\newcommand{\mapright}[1]{%
  \smash{\mathop{%
      \hbox to 1cm{\rightarrowfill}}\limits_{#1}}}
\newcommand{\maprightd}[2]{%
  \smash{\mathop{%
      \hbox to 1.2cm{\rightarrowfill}}\limits^{#1}\limits_{#2}}}
\newcommand{\mapleft}[1]{%
  \smash{\mathop{%
      \hbox to 1cm{\leftarrowfill}}\limits_{#1}}}
\newcommand{\mapleftu}[1]{%
  \smash{\mathop{%
      \hbox to 0.8cm{\leftarrowfill}}\limits^{#1}}}
\newcommand{\maprightu}[1]{%
  \smash{\mathop{%
      \hbox to 1cm{\rightarrowfill}}\limits^{#1}}}
\newcommand{\maprightud}[2]{%
  \smash{\mathop{%
      \hbox to 1cm{\rightarrowfill}}\limits^{#1}_{#2}}}
\newcommand{\mapleftud}[2]{%
  \smash{\mathop{%
      \hbox to 1cm{\leftarrowfill}}\limits^{#1}_{#2}}}
\title[Cartan calculi on the free loop spaces]{Cartan calculi on the free loop spaces \\
}
\author[K. Kuribayashi]{Katsuhiko Kuribayashi}
\address{%
  Department of Mathematical Sciences,
  Faculty of Science,
  Shinshu University,
  Matsumoto, Nagano 390-8621, Japan
}
\email{kuri@math.shinshu-u.ac.jp}
\author[T. Naito]{Takahito Naito}
\address{%
  Nippon Institute of Technology,
  Gakuendai, Miyashiro-machi, Minamisaitama-gun, Saitama 345-8501, Japan
}
\email{naito.takahito@nit.ac.jp}
\author[S. Wakatsuki]{Shun Wakatsuki}
\address{%
  Graduate School of Mathematics, Nagoya University,
  Furo-cho, Chikusa-ku, Nagoya, Aichi 464-8601, Japan
}
\email{shun.wakatsuki@math.nagoya-u.ac.jp}
\author[T. Yamaguchi]{Toshihiro Yamaguchi}
\address{%
  Faculty of Education,
  Kochi University, Akebono-cho, Kochi 780-8520, Japan
}
\email{tyamag@kochi-u.ac.jp}
\subjclass[2020]{55P50, 55P35, 55P62, 13D03, 19D55}
\keywords{Cartan calculus, Hochschild homology, cyclic homology, Andr\'e--Quillen cohomology, Brown--Szczarba model, free loop space, Sullivan model, derivation}
\thanks{%
  The first author was partially supported by a Grant-in-Aid for Scientific
  Research (B) 21H00982 from Japan Society for the Promotion of Science.
  The second author was supported by JSPS KAKENHI Grant Number JP18K13403.
  The third author was supported by JSPS KAKENHI Grant Number 20J00404.
  The fourth author was partially supported by JSPS KAKENHI Grant Number 20K03591.
}
\begin{document}

\maketitle

\begin{abstract} A typical example of a Cartan calculus consists of the Lie derivative and the contraction with vector fields of a manifold on the derivation ring of the de Rham complex.
  In this manuscript, a {\it second stage} of the Cartan calculus is investigated.
  In a general setting, the stage is formulated with operators obtained by  the Andr\'e--Quillen cohomology of a commutative differential graded algebra $A$
on the Hochschild homology of $A$ in terms of the homotopy Cartan calculus in the sense of Fiorenza and Kowalzig.
  Moreover, the  Cartan calculus is interpreted geometrically
  with maps from the rational homotopy group of the monoid of self-homotopy equivalences
  on a space $M$ to the derivation ring on the loop cohomology of $M$. We also give a geometric description to Sullivan's isomorphism, which relates the geometric Cartan calculus to the algebraic one, via the $\Gamma_1$ map due to F\'elix and Thomas.
\end{abstract}

\tableofcontents

\section{Introduction}


In the previous work \cite{KNWY}, we consider a method to describe the string bracket \cite{CS} on the rational $S^1$-equivariant homology of the free loop space $LM$ of
a simply-connected manifold $M$ in terms of the Gerstenhaber bracket on the loop homology of $M$, namely, the homology of $LM$. In particular, the reduction is possible if $M$ is {\it BV exact}; that is, the reduced Batalin--Vilkovisky (BV) operator on the loop homology is exact;
see \cite[Definition 2.9, Theorem 2.15 and Corollary 2.16]{KNWY} for more details.

The result \cite[Assertion 1.2]{KNWY} summarizes relationships between the BV exactness and other traditional homotopy invariants containing the formality of a space.
Especially, we show that a simply-connected space is BV exact if the space admits {\it positive weights}; see \cite[Theorem 2.21]{KNWY}. The key to proving the theorem is that two particular derivations on a Sullivan algebra associated with the space satisfy the {\it Cartan magic formula}; see \cref{prop:CartanCalculusSullivanModel} for the derivations that we use therein.  
The appearance of the formula has inspired us to consider algebraic and topological backgrounds for the derivations.
In this article,
we investigate such derivations in the framework of {\it homotopy Cartan calculi} introduced by Fiorenza and Kowalzig \cite{FK} and moreover give geometric descriptions to the \text{\it Lie derivative} and the {\it contraction operator}, which induce the two derivations mentioned above.

In order to describe our results in more detail, we first recall the classical Cartan calculus of the differential forms on a manifold $M$ together with
Connes' result on the Hochschild cohomology.
The space of vector fields on $M$ is considered a Lie algebra $\Der(C^\infty(M))$ of derivations on $C^\infty(M)$ the
$\mathbb{R}$-algebra of smooth functions on $M$.
The result \cite[II Section 6. Example]{Connes} due to Connes asserts that the {\it continuous} Hochschild cohomology $(HH^*_{conti}(C^\infty(M)), B)$ with Connes' $B$-operator $B$ is isomorphic to the de Rham complex $(\Omega^*(M), d)$ as a complex provided $M$ is compact. Thus the Lie  derivative $L_X$ and the contraction  (interior product) $\iota_X$ for each vector field $X$
are incorporated in the framework of a {\it Cartan calculus}
\begin{equation}\label{eq:classicalCatranCal}
  \xymatrix@C20pt@R20pt{
    \Der(C^\infty(M))\ar[r]^-{L_{(\ )}}_-{\iota_{( \ )}} &\big(\Der(\Omega^*(M)), d \big)\cong\big(\Der(HH^*_{conti}(C^\infty(M))), B\big)
  }
\end{equation}
in the sense that $L_{( \ )}$ is a Lie algebra representation and $\iota_{( \ )}$ is a linear map which satisfy, for any vector field $X$,
Cartan's magic formula
\[
L_X = [d, \iota_X].
\]

The Andr\'e--Quillen cohomology $H^{-*}_{AQ}(A)$ of a commutative differential graded algebra $A$ is an important invariant for such differential objects; see, for example, \cite{B-L05} for its applications.
Thus we may apply again cyclic theory, namely cyclic homology and Hochschild homology to the de Rham complex $(\Omega^*(M), d)$ involving the Andr\'e--Quillen cohomology. Let $\Der{(A)}$ denote the derivation subalgebra of the endomorphism Lie algebra $\End{(A)}$ of a  differential graded algebra $A$.
While the assignment $\Der{( \ )}$ is not functorial,
the Andr\'e--Quillen cohomology is defined as a {\it derived   version} of $\Der (A)$; see \cite{B-L05} and also \cref{sect:preliminaries} for the definition.

Let $M$ be a simply-connected manifold and
$\aut M$ the monoid of self-homotopy equivalences on $M$.
Then, we obtain the isomorphism
\[
\xymatrix@C20pt@R20pt{
\Phi : \pi_*(\aut M)\otimes {\mathbb R} \ar[r]^-{\cong} &
H^{-*}_{AQ}(\Omega^*(M))
}
\]
of Lie algebras due to Sullivan \cite{S}; see \cite[Theorems 3.6 and 4.3]{FLS} and
also \cref{sect:preliminaries} for the definition of $\Phi$.
Here, the homotopy group $\pi_*(\aut M)$ is regarded as a Lie algebra endowed with the Samelson product.
Main results (Propositions \ref{prop:LieAlg-New}, \ref{prop:Lie_e} and \cref{thm:e_and_L}) in this article
enable us to obtain a Cartan calculus on the de Rham complex $\Omega^*(M)$ with values in the endomorphism ring of the Hochschild homology of $\Omega^*(M)$
and its geometric interpretation with the free loop space $LM$. More precisely, the assertions are summarized as follows.

\begin{thm}\label{thm:CartanCal} Under the same notations and assumptions as above,
  there exists a commutative diagram
  \[
    \xymatrix@C25pt@R20pt{
      H^{-*}_{AQ}(\Omega^*(M)) \ar[r]^-{{}_aL}_-{e} & \left(\End(HH_*(\Omega^*(M))), B\right) \\
      \pi_*(\aut M)\otimes {\mathbb R} \ar[r]^-{L}_-{(-1)^*e} \ar[u]^-{\Phi}_-{\cong} & \left( \Der(H^*(LM; {\mathbb R})), \Delta \right)
      \ar[u]_-{\ell}
    }
  \]
  for $*> 1$ in which the upper row sequence is a Cartan calculus induced by a homotopy Cartan calculus in the sense of Fiorenza and Kowalzig \cite{FK} and the bottom row sequence is a Cartan calculus given geometrically by applying the loop construction to the adjoint of an element of homotopy group of $\aut M$;
see (\ref{defn:L_geom}) and (\ref{defn:e_geom}).
In particular, the calculi give the formulae
  \[
    {}_aL_\eta = [B, e_\eta] \  \ \text{and} \ \ L_\theta = [\Delta, \pm e_\theta]
  \]
  for $\eta \in H^*_{AQ}(\Omega^*(M))$ and  $\theta \in \pi_*(\aut M)\otimes {\mathbb R}$.
  Here $B$ and $\Delta$ denote Connes' B-operator on the Hochschild homology and
   the Batalin--Vilkovisky operator on the loop cohomology, respectively. Moreover, the right vertical map $\ell$ is a monomorphism 
   induced by the isomorphism between the loop cohomology and the Hochschild homology in \cite[Theorem 2.4]{BV88} preserving operators $\Delta$ and $B$.
\end{thm}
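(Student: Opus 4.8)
The plan is to assemble the diagram from three independently established components --- the algebraic (upper) Cartan calculus, the geometric (lower) Cartan calculus, and the two vertical comparison maps --- and then to verify that the resulting squares commute. First I would construct the upper row. Using a Sullivan model $(\SpaceModel, d)$ of $M$, I identify $HH_*(\Omega^*(M))$ with the homology of the associated Hochschild complex $\HochschildModel$, and I equip the latter with the homotopy Cartan calculus of Fiorenza and Kowalzig \cite{FK}. An Andr\'e--Quillen class $\eta \in H^{-*}_{AQ}(\Omega^*(M))$ is represented by a $d$-closed derivation of $\SpaceModel$, and feeding this derivation into the Fiorenza--Kowalzig machinery produces the contraction $e_\eta$ and the Lie derivative ${}_aL_\eta$ as endomorphisms of $HH_*(\Omega^*(M))$. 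The Cartan magic formula ${}_aL_\eta = [B, e_\eta]$ then emerges as the degree-zero homotopy Cartan relation for this calculus, with $B$ the induced Connes operator; this is the content of \cref{prop:LieAlg-New} and \cref{prop:Lie_e}.

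Next I would build the lower row geometrically. An element $\theta \in \pi_*(\aut M)\otimes{\mathbb R}$ is represented by a based map $S^k \to \aut M$ whose adjoint is a homotopy $S^k \times M \to M$ through self-equivalences. Applying the free loop functor and evaluating produces families of self-maps of $LM$: differentiating the $S^k$-family defines the geometric Lie derivative $L_\theta$ on $H^*(LM; {\mathbb R})$, see \eqref{defn:L_geom}, while combining this family with the $S^1$-rotation action yields the geometric contraction $e_\theta$, see \eqref{defn:e_geom}. The geometric magic formula $L_\theta = [\Delta, \pm e_\theta]$, with $\Delta$ the Batalin--Vilkovisky operator induced by the rotation action, then follows from the standard interplay between the rotation and the parameter family; this is \cref{thm:e_and_L}.

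It remains to check commutativity of the two squares. For the right-hand square I would invoke the isomorphism $HH_*(\Omega^*(M)) \cong H^*(LM;{\mathbb R})$ of \cite{BV88}, which intertwines $B$ and $\Delta$; the map $\ell$ is then the induced map sending a derivation of the loop cohomology algebra to the corresponding endomorphism of $HH_*(\Omega^*(M))$, and it is a monomorphism because the identification is an algebra isomorphism. For the left-hand square, where the comparison is the real crux, I would use the $\Gamma_1$ map of F\'elix and Thomas to give a chain-level, geometric description of Sullivan's isomorphism $\Phi$, and then trace the geometric operators $e_\theta$ and $L_\theta$ through $\Gamma_1$ in order to show that they agree with the algebraic $e_\eta$ and ${}_aL_\eta$ under $\Phi$, up to the sign $(-1)^*$ recorded in the statement.

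The main obstacle is precisely this last compatibility: matching the geometrically defined contraction, built from the loop construction on $\aut M$, with the algebraic contraction produced by the Fiorenza--Kowalzig calculus on the Hochschild model. The difficulty is that the two operators are defined in very different registers --- topological families of self-equivalences on one side, derivations of a commutative model on the other --- so the argument must carefully transport one description to the other through $\Phi$ (equivalently, through $\Gamma_1$), keeping track of suspension degrees and the resulting sign $(-1)^*$. This is also where the restriction $*>1$ enters, since the identification of $\pi_*(\aut M)$ with homotopy classes of derivations of $\SpaceModel$, and the associated operators, behave exceptionally in the lowest degree.
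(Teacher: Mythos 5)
Your overall architecture matches the paper's — two rows, the right square via the Burghelea--Vigu\'e-Poirrier isomorphism intertwining $B$ and $\Delta$, the left square as the crux, and the restriction $*>1$ coming from low-degree behavior of the geometric $e$ — but your account of where each ingredient comes from contains genuine errors that would derail the argument. The upper magic formula ${}_aL_\eta=[B,e_\eta]$ is not ``the content of \cref{prop:LieAlg-New} and \cref{prop:Lie_e}'': it comes from the homotopy Cartan calculus on the Hochschild chain complex (\cref{prop:HCartan_Hochschild}), whose homotopy term $S$ disappears on homology, together with \cref{thm:e_and_L}, which identifies that calculus on homology with the strict calculus on the Sullivan model $\HochschildModel$ of \cref{prop:CartanCalculusSullivanModel} (where $L_\theta=[s,e_\theta]$ holds before passing to homology). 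Likewise \cref{thm:e_and_L} is a purely algebraic comparison of the two calculi and says nothing about rotation actions, so your claim that the geometric formula $L_\theta=[\Delta,\pm e_\theta]$ ``follows from the standard interplay between the rotation and the parameter family; this is \cref{thm:e_and_L}'' is unsupported on both counts. The paper never proves the geometric formula directly: it is deduced a posteriori by pushing the algebraic formula through the monomorphism $\ell$, which preserves the operators $B$ and $\Delta$. If you want a direct geometric proof, you must actually supply the $S^1$-equivariance argument for $L(\ad(\theta))$; it is in none of the results you cite.

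Two further gaps. For the left square, the paper proves commutativity (\cref{prop:LieAlg-New}, \cref{prop:Lie_e}) by computing Sullivan representatives of the adjoint $\ad(\theta)$ and of $L(\ad(\theta))$ (via \cref{lem:ModelLf} and the adjoint-model results of the appendices) and reading off $\Phi(\theta)=\theta'$ from the expansion $\varphi(v)\equiv 1\otimes v+u\otimes\theta'(v)$; the $\Gamma_1$ description of $\Phi$ (\cref{thm:Model2_aut}) is a separate, standalone geometric interpretation — it requires $M$ closed (a Poincar\'e duality model enters essentially) and records only the Hodge weight-one component $H^{(1)}_*(LM)$, whereas $e_\theta$ and $L_\theta$ act on all of $H^*(LM;{\mathbb R})$, so knowing $\Phi$ through $\Gamma_1$ does not by itself yield compatibility of the operators; routing the crux through $\Gamma_1$ therefore does not work as stated. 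Second, you silently identify $H^{-*}_{AQ}(\Omega^*(M))$, defined via derivations on a cofibrant replacement $Q\Omega^*(M)$, with $H_*(\Der(\wedge V))$ compatibly with $L$ and $e$; since $\Der(-)$ is not functorial (cf.\ \cref{rem:CP^2}), this needs the Block--Lazarev factorization $m=p\circ i$ with the splittings $h_1(\theta)=p\circ\theta\circ g$ and $h_2(\theta)=r\circ\theta\circ i$, producing the commutative ladder into $\End(HH_*(-))$ — this, together with the ${\mathbb Q}$-to-${\mathbb R}$ coefficient transfer through $A_{PL}(M)\otimes_{\mathbb Q}{\mathbb R}$, constitutes the actual first half of the paper's proof and is entirely absent from your plan.
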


\begin{rem} The contraction operator $e$ in the upper sequence in \cref{thm:CartanCal} is defined for $*\geq 1$. However, the operator $e$ in the bottom sequence is defined for $* >1$; see (\ref{defn:e_geom}) below.

We observe that the square above for
${}_aL$ and $L$ is commutative even if $* =1$; see the proof of \cref{thm:CartanCal} in the end of \cref{sect:geom_L_e}. Moreover, it follows from \cref{lem:Lie_homo} and \cref{thm:LieAlgL} \cref{item:LieAlgL-i} that the maps ${}_aL$ and $L$ are morphisms of Lie algebras, respectively.
\end{rem}

We give more comments on Theorem \ref{thm:CartanCal} and its related results in this article.  By the Cartan calculus in (\ref{eq:classicalCatranCal}), we can regard
the de Rham complex as appearing via the Hochschild homology theory for $C^\infty(M)$.
The calculus in the upper sequence in the theorem is obtained by applying again the Hochschild homology theory to the de Rham complex.
Therefore, it seems that the pair
$({}_aL, e)$ of maps is in a {\it second stage} of Cartan calculi for the manifold $M$.

On the other hand, the dual of the calculus in the lower sequence in the theorem gives a Cartan calculus on the homology $H_*(LM;{\mathbb R})$.
The second author reveals a relationship between the calculus and algebraic structures in string topology theory; see \cite[Theorem 1.1]{Nai24}.

We stress that the pair in the upper sequence consists of the {\it Lie derivative} ${}_aL$ and the {\it contraction operator} $e$ in the homotopy Cartan calculus \cite[Definitions 3.1 and 3.7]{FK} associated with
the Hochschild complex ${\mathcal H}$ and the Burghelea--Vigu\'e-Poirrier complex $\HochschildModel$ of $\Omega^*(M)$ in \cite{BV88}, respectively.
As a consequence, we see that the two homotopy calculi coincide with each other on homology level; see Theorem \ref{thm:e_and_L}.
We remark that the Cartan magic formula holds in the complex $\HochschildModel$ before taking homology, but not in ${\mathcal H}$ in general;
see Propositions \ref{prop:CartanCalculusSullivanModel} and \ref{prop:HCartan_Hochschild}. 
Moreover, it is worth mentioning that
the contraction $e$ for the complex ${\mathcal H}$ is defined with the cap product between the Hochschild cochain and chain complexes of a commutative differential graded algebra; see, for example,
\cite{Luc2011} for the cap product.



Moreover, the contraction operator \(e\) is non-trivial in the following sense:
\begin{thm}\label{thm:e-inj}
  For a simply-connected closed manifold \(M\),
  the contraction operator
  \(e\colon \pi_*(\aut M)\otimes {\mathbb R} \to \Der(H^*(LM; {\mathbb R})) \)
  is injective.
\end{thm}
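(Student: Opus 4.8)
The plan is to reduce the statement to the injectivity of the \emph{algebraic} contraction operator appearing in the upper row of \cref{thm:CartanCal}, and then to detect a nonzero Andr\'e--Quillen class by a cap product evaluated against the fundamental class of $M$. First I would use the commutative square for the contraction in \cref{thm:CartanCal}, namely $\ell\circ((-1)^*e)=e\circ\Phi$. Since $\Phi$ is an isomorphism, the sign $(-1)^*$ is a bijection, and $\ell$ is a monomorphism, injectivity of the algebraic operator $e\colon H^{-*}_{AQ}(\Omega^*(M))\to\End(HH_*(\Omega^*(M)))$ forces injectivity of the geometric $e\colon \pi_*(\aut M)\otimes{\mathbb R}\to\Der(H^*(LM;{\mathbb R}))$ for $*>1$. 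Hence it suffices to prove that the algebraic $e$ is injective.

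Next I would pass to a minimal Sullivan model $(\wedge V,d)$ of $M$ and use the weight decomposition $HH_*(\wedge V)=\bigoplus_{k\ge 0}HH_*^{(k)}(\wedge V)$, under which $H^{-*}_{AQ}(\wedge V)$ is identified with the weight-one summand of Hochschild cohomology and the contraction $e_\eta$ becomes the cap product with $\eta$, lowering weight by one. In particular $e_\theta$ sends $HH_*^{(1)}$ into $HH_*^{(0)}=H^*(M)$, so it is enough to detect $\theta$ on the weight-one part. On the Burghelea--Vigu\'e-Poirrier model $(\wedge V\otimes\wedge sV,D)$ a class of $H^{-n}_{AQ}$ is represented by a $[d,-]$-closed derivation $\theta$ of degree $-n$, a weight-one cycle by a sum $x=\sum_i a_i\,sv_i$ with $a_i\in\wedge V$, and $e_\theta(x)=\sum_i\pm a_i\,\theta(v_i)\in\wedge V$. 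Writing $\langle\theta,x\rangle:=\sum_i\pm a_i\,\theta(v_i)$ and using that $\theta$ and the suspension $s$ are both derivations, so that $\langle\theta,s(w)\rangle=\theta(w)$, a short computation yields $\langle[d,\theta],x\rangle\mp\langle\theta,Dx\rangle=\pm d\langle\theta,x\rangle$. Composing with $\int_M\colon H^m(M)\to{\mathbb R}$, where $m=\dim M$, this descends to a pairing $H^{-n}_{AQ}(\wedge V)\otimes HH^{(1)}_{m+n-1}(\wedge V)\to{\mathbb R}$.

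Finally I would prove that this pairing is non-degenerate, which immediately gives $e_\theta\neq0$ whenever $[\theta]\neq 0$. Here the hypothesis that $M$ is closed enters decisively: Poincar\'e duality makes $\int_M(\,\cdot\cup\cdot\,)$ a perfect pairing on the finite type algebra $H^*(M)$, while the chain-level pairing $\langle-,-\rangle\colon(\Der(\wedge V),[d,-])\otimes(\wedge V\otimes sV,D^{(1)})\to(\wedge V,d)$ is perfect over $\wedge V$; combining the two identifies $(\Der(\wedge V),[d,-])$, up to a degree shift, with the $\int_M$-dual of the weight-one complex, so that the induced pairing on homology is perfect over ${\mathbb R}$ in each (finite-dimensional) degree. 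Consequently any nonzero $[\theta]$ pairs nontrivially with some weight-one class $x$, i.e.\ $\int_M e_\theta(x)\neq 0$, whence $e_\theta\neq 0$. I expect this non-degeneracy to be the main obstacle: one must check the sign-laden compatibility of $\langle-,-\rangle$ with both differentials so that it descends to homology, and then argue that Poincar\'e duality upgrades the coefficientwise $\wedge V$-duality to an honestly perfect pairing on homology, not merely a nonzero one. Because $V$ is infinite dimensional for rationally hyperbolic $M$, this perfectness must be extracted degreewise after applying $H^*(M)$, and it is here that closedness of $M$ is indispensable.
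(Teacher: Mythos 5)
Your proposal is correct and follows essentially the same route as the paper: the reduction through the commutative square for the contraction (the paper uses \cref{prop:Lie_e} directly rather than the full diagram of \cref{thm:CartanCal}), the identification of $H^{-*}_{AQ}$ with the weight-one part via the isomorphism $\lambda$ of \cref{prop:lambda_map}, and detection of a nonzero class by pairing the weight-one summand $\HochschildModel_{(1)}$ against the fundamental class, which is exactly \cref{thm:e_fundamental_class}. Your Poincar\'e-duality pairing is the paper's quasi-isomorphism $\pd$ built from the duality map $D\colon \wedge V\to(\wedge V)_*$ and the semifreeness of $\HochschildModel_{(1)}$ over $\wedge V$ (\cref{prop:non_deg_pairing}); note only that one-sided non-degeneracy of this pairing, which follows from $H(\pd)$ being an isomorphism, already suffices, so the full degreewise perfectness you flag as the main obstacle is not actually needed.
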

This is an immediate consequence of \cref{cor:e_faithful} and \cref{prop:Lie_e}.
The former is proved by showing that
the map invariably detects the fundamental class of a manifold \(M\);
see \cref{thm:e_fundamental_class}.

As for Sullivan's isomorphism $\Phi$, we show that the isomorphism factors through the map $\Gamma_1$ from $\pi_*(\aut M) \cong \pi_{*-1}(\Omega\aut M)$ to the loop homology of  simply-connected closed manifold $M$
introduced by F\'elix and Thomas in \cite{FT04}; see Theorem \ref{thm:Model2_aut}. Since the map $\Gamma_1$ is induced by
the evaluation map of the space of sections of the evaluation fibration $\ev_0 : LM \to M$,
it can be said that we give the isomorphism $\Phi$ a geometric interpretation.
It is worth mentioning that the Brown--Szczarba model \cite{BS97} for a function space plays a vital  role in the argument on the geometric description of the isomorphism $\Phi$.

We give comments on the Andr\'e--Quillen cohomology.
As mentioned above, taking the derivation algebra $\Der{(A)}$ for a commutative differential graded algebra $A$ is not functorial. However, we see that a Sullivan model
$\varphi\colon (\wedge V, d)\xrightarrow{\simeq} (A, d)$
 induces a morphism $\widetilde{\varphi} :
H^*(\Der(A)) \to H^*(\Der(\wedge V))$ which is compatible with Cartan calculi for $A$ and $\wedge V$. This is attained in \cref{prop:relate_Cartan_calculi}.
Such a map $\widetilde{\varphi}$ induced by $\varphi$ is an isomorphism if the codomain $A$ is also a Sullivan algebra; see \cref{cor:Der_quasi-iso}.
However, a quasi-isomorphism \(\varphi\colon (\wedge V, d)\xrightarrow{\simeq} (A, d)\) does not necessarily induce a quasi-isomorphism between \(\Der{(A)} \) and
\(\Der{(\wedge V)} \) in general; see \cref{rem:CP^2}.


The rest of this manuscript is organized as follows. 
Section \ref{sect:preliminaries} recalls results in rational homotopy theory with which we develop our arguments.
In Section \ref{sect:AlgCartanCal}, we recall the homotopy Cartan calculus mentioned above. Important examples of the calculi which come from a Sullivan algebra and the Hochschild complex of a DGA are given.
The naturality of a Cartan calculus are discussed in \cref{sect:naturality_of_CartanCalculus}.
Section \ref{sect:GeometricDiscription} is devoted to investigating geometric descriptions of the homotopy Cartan calculi considered in Section \ref{sect:Cartan}.  In Section \ref{sect:GeometricDiscription}, after explaining geometric constructions of the operations $L$ and $e$, we prove Theorem \ref{thm:CartanCal}.
In the rest of the section, we elaborate the proof of \cref{thm:Model2_aut} mentioned above.
Section \ref{sect:examples} deals with computational examples of the Lie derivative $L$ and the contraction operator $e$ described in Theorem \ref{thm:CartanCal}.

 In Appendix \ref{app:ModelAd}, we give a Sullivan representative for an adjoint map by using
{\it twice} Brown--Szczarba models for function spaces. The result plays a crucial rule in giving the geometric description of Sullivan's isomorphism $\Phi$.
In Appendix \ref{app:bar{L}}, we discuss an extension of the Lie derivative $L$ to cyclic theory and its geometric counterpart with the cobar-type Eilenberg--Moore spectral sequence converging to the $S^1$-equivariant cohomology of the free loop space $LM$; see Theorem \ref{thm:LieAlg-New} and Proposition \ref{prop:EMSS}.

\section{Preliminaries}\label{sect:preliminaries}
We begin with the definitions of the Hochschild complex of a differential non-negatively graded algebra (DGA for short) over a field, the endomorphism ring of a DGA and Sullivan's isomorphism $\Phi$ which are used repeatedly in this manuscript.
We assume that the underlying field is of characteristic zero unless otherwise stated.

Let $A=(A,d)$ be an augmented DGA, which is not necessarily graded commutative.
We use the cohomological grading on $A$ and then $\text{deg} \ d = +1$. While the homological degree of a graded vector space $W_*$ is also used, we freely apply the translation for homological and cohomological degrees with
$W_* = W^{-*}$.

Let $C_* (A) = (A\otimes T(s\bar{A}), d=d_1 + d_2)$ be the Hochschild chain complex of $(A,d)$.
Here, $\bar{A}$ denotes the augmentation ideal of $A$ and $s\bar{A}$ denotes the suspension of $\bar{A}$; that is, $(s\bar{A})^n = \bar{A}^{n+1}$.
The differentials $d_1$ and $d_2$ are defined by $d_1 = \sum_i d_{1,i}$ and $d_2 = \sum_i d_{2,i}$ with
\begin{align}
  &d_{1,i}(a_0 [a_1 | \cdots | a_n]) = \left\{
    \begin{array}{ll}
      da_0 [a_1 | \cdots | a_n] & (i=0),\\
      (-1)^{\varepsilon_i + 1}a_0 [a_1 | \cdots | da_i | \cdots | a_n] & (0<i \leq n),
    \end{array}
                                                      \right.
  \\
  &d_{2,i}(a_0 [a_1 | \cdots | a_n]) = \left\{
    \begin{array}{ll}
      (-1)^{|a_0|}a_0 a_1[a_2| \cdots | a_n] & (i=0),\\
      (-1)^{\varepsilon_{i+1}}a_0 [a_1 | \cdots | a_i a_{i+1} | \cdots | a_n] & (0<i < n),\\
      (-1)^{\varepsilon_n |sa_n| + 1} a_k a_0 [a_1 | \cdots | a_{n-1}] & (i=n),
    \end{array}
                                                           \right.
\end{align}
where $\varepsilon_i = |a_0| + \sum_{j<i}|sa_j|$.

\begin{defn}
  \label{defn:mixedObjects}
  \begin{enumerate}
    \item Let \((\mixed, \done)\)
      be a cochain complex.
      A triple \((\mixed, \done, \dtwo)\) is a \textit{mixed complex} if
      \(\dtwo\colon \mixed\to \mixed\) is a differential of degree \(-1\) with
      \([\done, \dtwo] := \done\dtwo + \dtwo\done = 0\).  
    \item A \textit{mixed DGA} is
      a mixed complex \((A, \done, \dtwo)\)
      together with a graded algebra structure on \(A\)
      such that \(\done\) and \(\dtwo\) are derivations with respect to it.
    \item A \textit{mixed differential graded (dg) Lie algebra} is
      a mixed complex \((\H, \done, \dtwo)\)
      together with a graded Lie algebra structure \([\ ,\ ]\) on \(\H\)
      such that \(\done\) and \(\dtwo\) are derivations with respect to \([\ ,\ ]\).
  \end{enumerate}
\end{defn}

Let  \((\mixed, \done, \dtwo)\) be a mixed complex.
We denote by \(\End(\mixed)\)
the endomorphism ring \(\Hom(\mixed, \mixed)\) of linear maps (of any degree). The ring \(\End(\mixed)\) is considered the Lie algebra with the bracket \([\ , \ ]\) defined by \([f, g] = fg - (-1)^{\deg{f}\deg{g}}gf\) for \(f\) and \(g \in \End( \mixed)\). We observe that \(\End(\mixed)\) is endowed with a dg Lie algebra structure whose differential is defined by \([d, \text{--} ]\) with the bracket and the differential $d$ of $\mixed$.
We see that a triple  \(( \End(\mixed), [\done, \ ],  [\dtwo,  \ ])\) is a mixed dg Lie algebra.
Moreover, for a DGA $A$, we define a differential graded  Lie subalgebra $\Der(A)$ of \(\End(A)\) consisting of derivations on $A$.
If \((A, \done, \dtwo)\) is a mixed DGA,
we observe that \(\Der(A)\) is a mixed dg Lie subalgebra of \(\End(A)\).

We recall a {\it derived version} of the non-positive derivations. Let $A$ be a commutative differential graded algebra $A$ (CDGA for short).
Following Block and Lazarev \cite{B-L05}, the Andr\'e--Quillen cohomology $H^*_{AQ}(A)$ of $A$ for $* \leq 0$ is defined by
\[H^*_{AQ}(A): = H^*(\Der(QA, A))\cong H^*(\Der(QA, QA), [d_{QA}, \text{--} ])
\]
with a cofibrant replacement $(QA, d_{QA})$ of $A$ in the category of CDGAs; see \cite{BG}. We regard $H^*_{AQ}(A)$ as a Lie algebra with the Lie bracket
on $H^*(\Der(QA, QA))$. Here we may choose as $QA$ a Sullivan model of $A$; see \cite[Section 12]{FHT1} for a general theory of Sullivan algebras.

Let \((\wedge V, d)\) be a Sullivan algebra for which \(V^1=0\).
Then we define a mixed DGA \((\wedge V\otimes\wedge \overline{V}, d, s)\),
where \(s\) is the derivation of degree \((-1)\) defined by \(sv=\bar{v}\) and \(s\bar{v}=0\) for \(v \in V\) and the differential \(d\) is the unique extension of \(d\colon\wedge V\to\wedge V\) which satisfies the condition that \([d, s] = 0\).
For simplicity of notation, we write
\(\HochschildModel = \wedge V\otimes\wedge\overline{V}\)
together with a decomposition
\(\HochschildModel = \bigoplus_k\HochschildModel_{(k)}\) of complexes,
where \(\HochschildModel_{(k)} = \wedge V\otimes\wedge^{k}\overline{V}\).
Observe that \(H(\HochschildModel)\) is isomorphic to the Hochschild homology of \((\wedge V, d)\); see \cite[Theorem 2.4(ii)]{BV88}.

Let $X$ be a simply-connected space whose rational cohomology \(H^*(X; \Q)\) is of finite type; that is,
\(\dim H^i(X; \Q)< \infty\) for each \(i\geq 0\).
Let \(LX\) be the free loop space which is the space of maps from \(S^1\) to \(X\) endowed with compact-open topology.
Suppose that \((\wedge V, d)\) is a Sullivan model for \(X\). Then
the complex \(\HochschildModel\) mentioned above is a Sullivan model for \(LX\); see \cite{V-S}.

We recall Sullivan's isomorphism $\Phi$ described in \cref{thm:CartanCal}.
Consider a sequence of the homotopy sets
  \[
    \xymatrix@C15pt@R18pt{
      \pi_n(\aut X) \ar[r]^k & [S^n \times X, X]  \ar[r]^-{\mu} &  [{\mathcal M}_X, {\mathcal M}_{S^n\times X}],
    }
  \]
where ${\mathcal M}_Y$ denotes a minimal Sullivan model for a space $Y$ and $\mu$ assigns a map $f$ a Sullivan representative for $f$.
We may replace ${\mathcal M}_{S^n\times X}$ with the DGA $H^*(S^n;\Q)\otimes {\mathcal M}_X$. 
Then we write
  \[
  (\mu\circ k)(\theta) = 1\otimes 1_{{{\mathcal M}_X}} + \iota \otimes \theta',
  \]
  where $\iota$ is the generator of  $H^n(S^n;\Q)$. Then, Sullivan's isomorphism $\Phi$ of Lie algebras
  \[
  \Phi : \pi_*(\aut X)\otimes \Q \to  H^{-*}(\Der({\mathcal M}_X), [d, \text{--}] ) = H^{-*}_{AQ}(A_{PL}^*(X))
  \]
  is defined by $\Phi(\theta) = \theta'$; see \cite{S} and also \cite[Theorems 3.6 and 4.3]{FLS}.

\section{Algebraic Cartan calculi}\label{sect:AlgCartanCal}

The homotopy Cartan calculus due to Fiorenza and Kowalzig \cite{FK} provides a systematic way to endow the shifted homology of a mixed complex $MC$ with the Batalin-Vilkovisky algebra structure and to give the Chas-Sullivan-Menichi \cite{CS, Luc2004} bracket to
the negative cyclic homology of $MC$; see \cite[Theorem D]{FK}.
Thus, it is crucial to consider examples of such a homotopy calculus.

In this section, we recall  the homotopy Cartan calculus with a slight generalization.
Roughly speaking, the calculus consists of two operations ($e$ and $L$) between complexes  and two {\it homotopies} ($S$ and $T$) between the two operations.
We give examples of the calculi by using a Sullivan model of the free loop space of a simply-connected space and the Hochschild chain complex of a differential graded algebra (DGA).
While the operations of the two homotopy calculi are identified on the homology level if a given DGA is a Sullivan algebra; see \cref{thm:e_and_L},
the difference between the homotopy calculi appears in the homotopy between operations; see \cref{prop:CartanCalculusSullivanModel} and \cref{prop:HCartan_Hochschild}.

\subsection{Homotopy Cartan calculus with slight generalization}\label{sect:Cartan}


Let \((\G, \dG)\) be a chain complex
and \((\H, \done, \dtwo)\) a mixed complex.

\begin{defn}[{cf.\ \cite[Definition 3.1]{FK}}]\label{defn:preCartan_generalization} A tuple
\((\G, \H, \cce, \ccl, \ccs)\) consisting of
  linear maps \(\cce, \ccl, \ccs\colon \G\to\H\) of degrees 1, 0 and \(-1\), respectively, is
  a \textit{homotopy pre-Cartan calculus}
  if the equalities
  \begin{align}
    &\ccl_\theta = \dtwo(\cce_\theta) + \done(\ccs_\theta) + \ccs_{\dlie \theta}, \\
    &\done(\cce_\theta) + \cce_{\dlie \theta} = 0 \ \ \text{and}  \\
    &
    \dtwo(\ccs_\theta) = 0
  \end{align}
hold for any \(\theta\in\G\).  The linear maps \(\cce\) and \(\ccl\) are called a \textit{contraction operator} (or \textit{cap product}) and a \textit{Lie derivative}, respectively.
\end{defn}

The first two conditions imply that
\(\cce\) and \(\ccl\) are chain maps of degree \(1\) and \(0\), respectively.

\begin{defn}\label{defn:Cartan_generalization}[{cf.\ \cite[Definition 3.7]{FK}}]
  Let \((\G, \dG, [ \ , \ ])\) be a dg Lie algebra and
  \((\H, \done, \dtwo, [ \ , \ ])\) a mixed Lie algebra; see \cref{defn:mixedObjects}.
  A \textit{homotopy Cartan calculus} \((\G, \H, \cce, \ccl, \ccs, \cct)\)
  is a homotopy pre-Cartan calculus \((\G, \H, \cce, \ccl, \ccs)\) equipped with
  a linear map \(\cct\colon\G\otimes\G\to\H\) satisfying the following equalities
  for any \(\theta, \rho\in\G\)
  \begin{align}
    &[\cce_\theta, \ccl_\rho] - \cce_{[\theta, \rho]} = \done(\cct_{\theta, \rho}) - \cct_{\dlie \theta, \rho} - (-1)^{\deg{\theta}}\cct_{\theta, \dlie \rho}, \\
    &[\ccs_\theta, \ccl_\rho] - \ccs_{[\theta, \rho]} = \dtwo(\cct_{\theta, \rho}).
  \end{align}
  Here \(\cct\) is called a \textit{Gelfan'd-Daletski\u\i-Tsygan homotopy}.
\end{defn}

\begin{rem}
  These definitions are equivalent to \cite[Definitions 3.1 and 3.7]{FK}
  if \((\H, \done, \dtwo, [ \ , \ ])\) is
  the tuple \((\End(\mixed), [\done', \ ],  [\dtwo',  \ ], [\ , \ ])\)
  which is given by a mixed complex \((\mixed, \done', \dtwo')\).
  In this case,
  we may call the calculus a homotopy Cartan calculus on the mixed complex \(\mixed\).
\end{rem}

The following is one of fundamental properties of a homotopy Cartan calculus.

\begin{lem}[{cf.\ \cite[Lemmas 3.4 and 3.10]{FK}}]\label{lem:Lie_homo}
  Let \((\G, \H, \cce, \ccl, \ccs, \cct)\) be a homotopy Cartan calculus.
  Then the map \(\ccl\colon \G\to \H\) is a morphism of dg Lie algebras.
\end{lem}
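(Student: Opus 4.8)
The plan is to verify the two conditions that make $\ccl$ a morphism of dg Lie algebras: compatibility with the differentials and with the brackets. The first is essentially already recorded after \cref{defn:preCartan_generalization}. Indeed, applying $\done$ to the defining relation $\ccl_\theta = \dtwo(\cce_\theta) + \done(\ccs_\theta) + \ccs_{\dlie\theta}$ and using $\done^2 = 0$, the mixed relation $[\done,\dtwo]=0$, and $\done(\cce_\theta) = -\cce_{\dlie\theta}$, one computes $\done(\ccl_\theta) = \dtwo(\cce_{\dlie\theta}) + \done(\ccs_{\dlie\theta}) = \ccl_{\dlie\theta}$, so $\ccl$ is a chain map of degree $0$. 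The substance of the lemma is therefore the bracket identity $\ccl_{[\theta,\rho]} = [\ccl_\theta, \ccl_\rho]$, which I would establish strictly, not merely up to homotopy.

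The crucial preliminary observation is that $\dtwo(\ccl_\rho) = 0$ for every $\rho\in\G$. This follows by applying $\dtwo$ to the defining relation for $\ccl_\rho$: the term $\dtwo\dtwo(\cce_\rho)$ vanishes since $\dtwo^2=0$; the term $\dtwo\done(\ccs_\rho) = -\done\dtwo(\ccs_\rho)$ vanishes because $\dtwo(\ccs_\rho)=0$; and $\dtwo(\ccs_{\dlie\rho})$ vanishes for the same reason applied to $\dlie\rho$. The two vanishings of $\dtwo\ccs$ are precisely the third pre-Cartan relation evaluated at $\rho$ and at $\dlie\rho$.

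The main computation expands $\ccl_{[\theta,\rho]}$ through its defining relation and rewrites the three resulting terms using the Cartan data. Concretely, I replace $\cce_{[\theta,\rho]}$ via the first Gelfan'd--Daletski\u\i--Tsygan relation in \cref{defn:Cartan_generalization}, replace $\ccs_{[\theta,\rho]}$ via the second, and treat $\ccs_{\dlie[\theta,\rho]}$ by first expanding $\dlie[\theta,\rho] = [\dlie\theta,\rho] + (-1)^{\deg{\theta}}[\theta,\dlie\rho]$ and then applying the second relation to each summand. After applying $\dtwo$ and $\done$ and distributing them over the Lie brackets as graded derivations, every occurrence of $\cct$ cancels: the contributions $\dtwo\done(\cct_{\theta,\rho})$ and $\done\dtwo(\cct_{\theta,\rho})$ cancel via $[\done,\dtwo]=0$, while the $\dtwo(\cct_{\dlie\theta,\rho})$ and the $\dtwo(\cct_{\theta,\dlie\rho})$ terms cancel in pairs. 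Using that $\ccl$ is a chain map to replace $\done(\ccl_\rho)$ by $\ccl_{\dlie\rho}$, the two $[\ccs_\theta,\ccl_{\dlie\rho}]$ contributions also cancel by a sign count, and one is left with $[\dtwo(\cce_\theta)+\done(\ccs_\theta)+\ccs_{\dlie\theta},\,\ccl_\rho]$ together with a single residual term proportional to $[\cce_\theta,\dtwo(\ccl_\rho)]$. The bracket equals $[\ccl_\theta,\ccl_\rho]$ by the definition of $\ccl_\theta$, and the residual term vanishes by the preliminary observation $\dtwo(\ccl_\rho)=0$, yielding $\ccl_{[\theta,\rho]} = [\ccl_\theta,\ccl_\rho]$.

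I expect the sign bookkeeping in the third paragraph to be the only real obstacle: one must track the Koszul signs produced when $\dtwo$ (degree $-1$) and $\done$ (degree $+1$) are distributed over the graded Lie brackets, and confirm that the $\cct$-terms and the $[\ccs_\theta,\ccl_{\dlie\rho}]$-terms cancel exactly. The conceptual content is light by contrast: all of the homotopy data $\ccs$ and $\cct$ ultimately disappears, and the bracket identity is forced by the two Gelfan'd--Daletski\u\i--Tsygan relations together with the single clean fact $\dtwo(\ccl_\rho)=0$.
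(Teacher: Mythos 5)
Your argument is correct, and it is in substance the proof the paper delegates to its citation rather than reproducing: the chain-map property together with your key auxiliary identity $\dtwo(\ccl_\rho)=0$ is exactly the content of \cite[Lemma 3.4]{FK}, while your bracket computation---expanding $\ccl_{[\theta,\rho]}$ through the defining relation, substituting the two Gelfan'd--Daletski\u\i--Tsygan relations (also for $\dlie\theta$ and $\dlie\rho$ after applying the Leibniz rule to $\dlie[\theta,\rho]$), cancelling all $\cct$-terms via $[\done,\dtwo]=0$ and the pairwise cancellations, and killing the residual $[\cce_\theta,\dtwo(\ccl_\rho)]$---is \cite[Lemma 3.10]{FK} transported verbatim to the paper's slightly generalized setting where $\H$ is an abstract mixed dg Lie algebra rather than $\End(\mixed)$. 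The sign bookkeeping you flag does close: with the convention $D[x,y]=[Dx,y]+(-1)^{\deg{D}\deg{x}}[x,Dy]$ for the bracket derivations $\done$ and $\dtwo$, the two $[\ccs_\theta,\ccl_{\dlie\rho}]$ contributions enter with signs $(-1)^{\deg{\theta}-1}$ and $(-1)^{\deg{\theta}}$ and cancel, leaving precisely $[\dtwo(\cce_\theta)+\done(\ccs_\theta)+\ccs_{\dlie\theta},\,\ccl_\rho]=[\ccl_\theta,\ccl_\rho]$.
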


In particular, we see that a homotopy Cartan calculus \((\G, \H, \cce, \ccl, \ccs, \cct)\) gives a \((H(\G), [\ , \ ])\)-module structure to \(H(\H)\) via the map
\(H(L) : H(\G) \to H(\H)\). Moreover, it follows that \(H(e) :  H(\G) \to H(\H)\) is a morphism of \((H(\G), [\ , \ ])\)-modules.

If the linear map \(T\) in a homotopy Cartan calculus is trivial, then the map \(\cce\colon\G\to\H\) is regarded as a morphism of \((\G, \dG, [ \ , \ ])\)-modules, where the \(\G\)-module structure of \(\H\) is given by the morphism \(L\) of Lie algebras. We observe that our examples of homotopy Cartan calculi are in such a case; see Propositions  \ref{prop:CartanCalculusSullivanModel} and \ref{prop:HCartan_Hochschild} below.

\subsection{Homotopy Cartan calculus on the Sullivan model of free loop spaces}
\label{sect:CartanOnSullivan}
In this section, we give a homotopy Cartan calculus induced by a Sullivan algebra.
We recall the CDGA \(\HochschildModel\) described in Section \ref{sect:preliminaries}.

\begin{defn}\label{defn:derivationSullivan}
  For a derivation \(\theta\) on \(\wedge V\),
  we define derivations \(L_\theta\) and \(e_\theta\) on \(\HochschildModel\) by
  \begin{align}
    &L_\theta{v} = \theta v,\quad L_\theta{\bar{v}} = (-1)^{\deg{\theta}}s\theta v,\\
    &e_\theta{v} = 0,\quad e_\theta{\bar{v}} = (-1)^{\deg{\theta}}\theta v
  \end{align}
  for \(v\in V\).
  This defines linear maps
  \(L\colon \Der(\wedge V)\to \Der(\HochschildModel)\)
  of degree \(0\) and
  \(e\colon \Der(\wedge V)\to \Der(\HochschildModel)\)
  of degree \((-1)\).
\end{defn}

These derivations are introduced in \cite[Proof of Theorem 2.21]{KNWY}
by modifying constructions in \cite[Proposition 5]{Vigue1994}.

\begin{prop}
  \label{prop:CartanCalculusSullivanModel}
  The above maps give a homotopy Cartan calculus of the form
  \((\Der(\wedge V), \Der(\HochschildModel), e, L, \ccs=0, \cct=0)\).
\end{prop}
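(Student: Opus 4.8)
The plan is to verify the defining equations of \cref{defn:preCartan_generalization} and \cref{defn:Cartan_generalization} directly, exploiting the vanishing $S=0$ and $T=0$. On $\Der(\HochschildModel)$ the two mixed differentials are $\done = [d, \text{--}]$ and $\dtwo = [s, \text{--}]$, while the differential on $\Der(\wedge V)$ is $\dG = [d, \text{--}]$. With $S=T=0$ the equations involving $S$ and $T$ hold trivially, and the remaining three collapse to
\[
  L_\theta = [s, e_\theta], \qquad [d, e_\theta] + e_{[d, \theta]} = 0, \qquad [e_\theta, L_\rho] = e_{[\theta, \rho]}.
\]
The first is exactly the Cartan magic formula $L_\theta = \dtwo(e_\theta)$ for the pair $(e, L)$; the second says $e$ is a chain map of degree $1$; and the third says $e$ is a morphism of $\Der(\wedge V)$-modules.

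Every operator occurring in these three identities is a derivation of the free graded-commutative algebra $\HochschildModel = \wedge V \otimes \wedge \overline V$: each is assembled from $d$, $s$, $L$ and $e$ by graded commutators, and the graded commutator of two derivations is again a derivation. Hence it suffices to check each identity on the algebra generators $v \in V$ and $\bar v \in \overline V$, using $sv = \bar v$, $s\bar v = 0$ and $d\bar v = -s(dv)$ (the last coming from $[d,s]=0$).

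The computational core is a single auxiliary identity, $e_\theta \circ s = (-1)^{\deg{\theta}}\theta$ as maps $\wedge V \to \HochschildModel$. Both sides are derivations of degree $\deg{\theta}$ carrying the same Koszul sign — here one uses that $e_\theta$ vanishes on all of $\wedge V$, since it kills $V$ — so they agree as soon as they agree on $v \in V$, where $e_\theta(sv) = e_\theta \bar v = (-1)^{\deg{\theta}}\theta v$. Granting this, the magic formula follows by evaluating $[s, e_\theta]$ on $v$ and $\bar v$; the chain-map relation follows from the same auxiliary identity applied to $dv \in \wedge V$, making the two resulting terms cancel; and the module relation follows by expanding $[e_\theta, L_\rho]\bar v$, applying the auxiliary identity and the fact that $L_\rho$ restricts to $\rho$ on $\wedge V$, and then recognizing the graded commutator $[\theta, \rho]$ of derivations on $\wedge V$.

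I expect the only genuine obstacle to be sign bookkeeping. The derivation $e_\theta$ has degree $\deg{\theta}+1$, so the Koszul signs in the graded commutators $[s, e_\theta]$, $[d, e_\theta]$ and $[e_\theta, L_\rho]$ must be tracked carefully, and one must confirm that the factor $(-1)^{\deg{\theta}}$ from the auxiliary identity propagates so that the two terms of the chain-map relation cancel and the two terms of the module relation combine into $(-1)^{\deg{\theta}+\deg{\rho}}[\theta,\rho]\,v$, which matches $e_{[\theta,\rho]}\bar v$. Once the reduction to generators and the auxiliary identity are in place, there is no conceptual difficulty remaining.
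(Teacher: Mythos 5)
Your proof is correct and follows essentially the same route as the paper's: since $S=T=0$, the calculus reduces to the three identities $L_\theta=[s,e_\theta]$, $[d,e_\theta]+e_{\dlie\theta}=0$ and $[e_\theta,L_\rho]=e_{[\theta,\rho]}$, which are verified on the generators $V\oplus\overline{V}$ and extended using that all operators involved are (commutators of) derivations on the free algebra $\HochschildModel$. Your auxiliary identity $e_\theta\circ s=(-1)^{\deg{\theta}}\theta$ on $\wedge V$ is simply a clean packaging of the ``straightforward computation'' the paper leaves implicit, and your sign bookkeeping checks out.
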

\begin{proof}
  Since \(\ccs=\cct=0\),
  we can reduce the equalities in \cref{defn:preCartan_generalization} and \cref{defn:Cartan_generalization} to
  \begin{align}
    &\ccl_\theta = [s, \cce_\theta], \\
    &[\done, \cce_\theta] + \cce_{\dlie \theta} = 0, \\
    &[\cce_\theta, \ccl_\rho] - \cce_{[\theta, \rho]} = 0.
  \end{align}
  A straightforward computation enables us to deduce that the equalities above hold on 
  \(V\oplus\overline{V}\).
  Since \(\cce_\theta\) and \(\ccl_\theta\) are derivations for any \(\theta\in\Der(\wedge V)\), we have the result.
\end{proof}

\subsection{Homotopy Cartan calculus on the Hochschild chain complex}
\label{sect:CartanOnHochschild}


In this section, we consider a homotopy Cartan calculus on the Hochschild chain complex of an augmented DGA.
While the domain of our calculus is restricted to the derivation ring of a DGA, the calculus is regarded as a DGA version of a homotopy Cartan calculus on the Hochschild chain complex of an associative algebra described in \cite[Example 3.13]{FK}.

We recall the Hochschild complex $C_*(A)$ of a DGA $(A, d)$ mentioned in Section \ref{sect:preliminaries}.
Then,
Connes' $B$ operator $B : C_* (A) \to C_*(A)$ is defined by
\[
  B_n := B | _{A\otimes T^n (s\bar{A}) } = s\circ (1 + t_n + \cdots + t^n_n)
\]
for $n\geq 0$.
Here, $t_n : A\otimes T^n (s\bar{A}) \to A\otimes T^n (s\bar{A})$ and $s : A\otimes T^n (s\bar{A}) \to A\otimes T^{n+1} (s\bar{A})$ are morphisms given by $t_0 =1$ and
\begin{align}
  t_n (a_0 [a_1 | \cdots |a_n]) &= (-1)^{|sa_n|(\varepsilon_n +1)} a_n [a_0| \cdots | a_{n-1}],
  \\
  s(a_0 [a_1 | \cdots |a_n]) &= 1[a_0 | a_1 | \cdots | a_n],
\end{align}
where $\varepsilon_i$ is the notation described in Section \ref{sect:preliminaries}.
Then, it follows from \cite[Example 1]{BV88} that the triple $(C_*(A), d, B)$ is a mixed complex.

Let $A'$ be an augmented DGA and  $\varphi : A \to A'$ a morphism of DGAs. For a derivation $\theta \in \Der (A,A')$, we define $L_{\theta} : C_*(A) \to C_*(A')$ by $L_{\theta} = \sum_ {i} L_{\theta,i}$ and
\[
  L_{\theta, i}(a_0 [a_1 | \cdots |a_n]) = \left\{
    \begin{array}{ll}
      \theta( a_0 )[\varphi(a_1) | \varphi(a_2) |\cdots | \varphi(a_n)] & ( i = 0),\\
      (-1)^{|\theta|(\varepsilon_i + 1)}\varphi(a_0) [\varphi(a_1) | \cdots | \theta (a_i) | \cdots | \varphi(a_n)] & (1 \leq i \leq n).
    \end{array}
  \right.
\]
We also define $e_{\theta} : C_*(A) \to C_*(A')$ by $e_{\theta}|_A =0$ and
\[
  e_{\theta}(a_0 [a_1 | \cdots |a_n])
  =
  (-1)^{|\theta||a_0| + |\theta| + |a_0|} \varphi(a_0) \theta (a_1)[ \varphi(a_2)|\cdots |\varphi(a_n)].
\]
Let $e'_{\theta}$ be the element in the Hochschild cochain complex $C^*(A;A')$ given by
\[
  e'_{\theta}(a_0 [a_1 | \cdots |a_n ] a_{n+1})
  = \left\{
    \begin{array}{ll}
      (-1)^{|\theta||a_0| + |\theta| + |a_0|} \varphi(a_0) \theta (a_1) \varphi(a_2) & ( n =1 ), \\
      0 & (n \neq 1).
    \end{array}
  \right.
\]
Then we see that $e_{\theta} = e'_{\theta} \cap \text{--}$, where the right-hand side is the cap product with $e'_{\theta}$
; see \cite[\S 3]{Luc2011}
for the cap product.
Moreover, we define $S_{\theta} : C_*(A) \to C_* (A')$ by $S_{\theta} |_A = 0$ and, for $n\geq 1$,
\[
  S_{\theta}|_{A\otimes T^n (s\bar{A})} = \sum_{j=1}^n \left( \sum_{k=0}^{n-j} s\circ t_n^k \right) \circ L_{\theta , j}.
\]

\begin{prop}\label{prop:HCartan_Hochschild}
  Let $e$, $L$ and $S$ be the morphisms described above.

  \begin{enumerate}
    \item\label{item:HCartan_Hochschild-1} The tuple $(\Der (A,A'), \Hom (C_*(A), C_*(A')), e, L, S)$ is a homotopy pre-Cartan calculus.
    \item\label{item:HCartan_Hochschild-2} The tuple $(\Der (A), \End(C_*(A)), e, L, S, T = 0)$ is a homotopy Cartan calculus on the mixed complex $(C_*(A), d, B)$.
  \end{enumerate}
\end{prop}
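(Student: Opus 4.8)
The plan is to verify directly the defining relations of \cref{defn:preCartan_generalization} and \cref{defn:Cartan_generalization}, specialized to the mixed complex \((C_*(A), d, B)\); here the two differentials on \(\Hom(C_*(A), C_*(A'))\) are \([d,-]\) and \([B,-]\), and the differential on \(\Der(A,A')\) is \(\delta\theta = d_{A'}\theta - (-1)^{\deg{\theta}}\theta d_A\). Because \(e\), \(L\) and \(S\) are prescribed by explicit formulas on a generic chain \(a_0[a_1|\cdots|a_n]\), each relation becomes a signed bookkeeping computation on such an element. The organizing principle is that our calculus is the ``derivation part'' of the Hochschild \emph{cochain} calculus of Fiorenza--Kowalzig: when \(d_A=0\), \(A'=A\) and \(\varphi=\mathrm{id}\), the maps \(e\), \(L\), \(S\) are the restrictions to \(1\)-cochains of the operators in \cite[Example 3.13]{FK}, and the relations are theirs (the bar part of the Hochschild coboundary of \(\theta\) vanishes precisely because \(\theta\) is a derivation). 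The genuinely new content is the contribution of the internal differential \(d_A\), equivalently of the part \(d_1\) of \(d=d_1+d_2\), together with the morphism \(\varphi\), which I track throughout.

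For \cref{item:HCartan_Hochschild-1} I verify
\[
  L_\theta = [B, e_\theta] + [d, S_\theta] + S_{\delta\theta}, \qquad [d, e_\theta] + e_{\delta\theta} = 0, \qquad [B, S_\theta] = 0.
\]
The middle equality says that \(e\) is a chain map up to the correction \(e_{\delta\theta}\); writing \(e_\theta = e'_\theta \cap (-)\), it reduces to the Leibniz relation between the Hochschild differential of the cochain \(e'_\theta\) and the cochain \(e'_{\delta\theta}\), checked by splitting \(d = d_1 + d_2\) and comparing both sides entry by entry. The first equality is the homotopy Cartan magic formula and is the heart of the matter. Splitting \(d = d_1 + d_2\), I will establish separately that
\[
  [d_1, S_\theta] + S_{\delta\theta} = 0 \qquad\text{and}\qquad L_\theta = [B, e_\theta] + [d_2, S_\theta].
\]
The first of these is where the DGA structure enters: since \(d_1\) applies \(d_A\) slot-by-slot while \(S_\theta=\sum_{j}\big(\sum_{k} s\,t_n^k\big) L_{\theta,j}\) interlaces the rotation \(t_n\), the suspension \(s\) and the partial Lie derivatives \(L_{\theta,j}\), and since \(d_1\) commutes with \(s\) and \(t_n\) up to the signs \((-1)^{\varepsilon_i}\), the graded commutator \([d_1, L_{\theta,j}]\) collapses to \(L_{[d_A,\theta],j} = L_{\delta\theta,j}\); summing over \(j\) and \(k\) then yields \(-S_{\delta\theta}\). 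The second equality no longer involves \(d_A\) and is exactly the associative-algebra formula of \cite{FK} applied to the graded algebra underlying \(A\), which survives because \(\theta\) (resp.\ \(\varphi\)) is a derivation (resp.\ a morphism), so the Leibniz rule makes \(e_\theta\), \(L_\theta\) compatible with the products in \(d_2\). \textbf{The main obstacle} is the complete sign accounting in these two reductions, in particular the telescoping of the nested cyclic sums \(\sum_k s\,t_n^k\) when pushed through \(d_1\) and \(d_2\) and the matching of the \((-1)^{\varepsilon_i}\)-signs against \(S_{\delta\theta}\).

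The third equality \([B, S_\theta] = 0\) I expect to follow cleanly from the structure of the normalized complex: both \(B = s\,(1 + t_n + \cdots + t_n^n)\) and \(S_\theta\) have image in \(\im(s)\), the subspace of chains whose \(0\)-th slot is the unit \(1\), and \emph{both} operators vanish on \(\im(s)\). Indeed, on a chain with leading entry \(1\) every nontrivial rotation \(t_n^k\) \((k\ge 1)\) moves \(1\) into a bracket slot and so is zero (as \(1\notin\bar{A}\)), while the outer suspension \(s\) then produces \(1[1|\cdots]=0\); for \(S_\theta\) the same conclusion holds because its defining sum begins at \(j=1\), so each summand ends in \(s\,t_n^k\) applied to a chain with leading entry \(1\). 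Hence \(B S_\theta = 0 = S_\theta B\), giving \([B,S_\theta]=0\).

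For \cref{item:HCartan_Hochschild-2} I put \(A'=A\), \(\varphi=\mathrm{id}\) and check the two relations of \cref{defn:Cartan_generalization} with \(\cct=0\), namely \([e_\theta, L_\rho] = e_{[\theta,\rho]}\) and \([S_\theta, L_\rho] = S_{[\theta,\rho]}\), which assert that \(e\) and \(S\) are \emph{strictly} equivariant for the Lie action induced by \(L\). In the cochain calculus of \cite[Example 3.13]{FK} the corresponding homotopy \(T\) is assembled from Gerstenhaber brace products; the point is that these braces degenerate on \(1\)-cochains that are derivations, so the obstruction vanishes and one may take \(T=0\). Concretely I compare \(L_\rho e_\theta - (-1)^{\deg{\theta}\deg{\rho}} e_\theta L_\rho\) with \(e_{[\theta,\rho]}\) on \(a_0[a_1|\cdots|a_n]\), using that the Lie bracket of derivations is the pointwise bracket and that the values of \(\theta\) and \(\rho\) distribute over products by Leibniz; the same computation, with \(S\) in place of \(e\), settles the second relation.
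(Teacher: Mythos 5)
Your proposal is correct, and its skeleton is the paper's own proof: the paper verifies exactly the identities (i)--(v) you list by direct computation on chains $a_0[a_1|\cdots|a_n]$, with the same splitting $d=d_1+d_2$, separating the $d_1$-commutator with $S_\theta$ (which accounts for $S_{\delta\theta}$) from the purely multiplicative statement $L_\theta=[B,e_\theta]+[d_2,S_\theta]$; likewise for \cref{item:HCartan_Hochschild-2} both you and the paper check strict equivariance of $e$ and $S$ directly (the paper organizes this via $e_\theta=d_{2,0}\circ L_{\theta,1}$ and the commutation of the $L_{\theta,i}$ with $d_{2,0}$, $t_n$ and $s$). Three local differences are worth recording. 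First, where you outsource $L_\theta=[B,e_\theta]+[d_2,S_\theta]$ to the Fiorenza--Kowalzig associative calculus (in effect Rinehart's formula for a derivation), the paper proves it by hand through two finer component identities, $L_\theta=(-1)^{|\theta|}e_\theta\circ B_n+(d_{2,0}+d_{2,n+1})\circ S_\theta$ and $B_{n-1}\circ e_\theta+\bigl(\sum_{i=1}^n d_{2,i}\bigr)\circ S_\theta+(-1)^{|\theta|}S_\theta\circ d_2=0$; the citation is legitimate, but since the graded signs and the morphism $\varphi$ must be tracked anyway, in practice it saves little. Second, your argument for $[B,S_\theta]=0$ is genuinely different and cleaner: the paper chases the relations $s\circ s=0$, $t_n\circ s=0$ and $L_{\theta,i}\circ s=(-1)^{|\theta|}s\circ L_{\theta,i-1}$, whereas you observe that both operators take values in $\im(s)$ and vanish on $\im(s)$ (a leading entry $1$ is annihilated by every $t_n^k$ with $k\geq 1$, since $1\notin\bar{A}$, and by a further $s$), which even yields the stronger statement $BS_\theta=0=S_\theta B$. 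Third, on signs: with your stated convention $\delta\theta=d_{A'}\theta-(-1)^{|\theta|}\theta d_A=[d,\theta]$, your version $[d_1,S_\theta]=-S_{\delta\theta}$ is the one that recombines with $L_\theta=[B,e_\theta]+[d_2,S_\theta]$ into the required $L_\theta=[B,e_\theta]+[d,S_\theta]+S_{\delta\theta}$ (one verifies it readily on $a_0[a_1]$); the paper displays $[d_1,S_\theta]=S_{\delta\theta}$, which is consistent only under the opposite sign convention for $\delta$, so your bookkeeping is the coherent one. One small slip in your prose for (iv): since $e_\theta$ has degree $|\theta|+1$, the relevant commutator is $[e_\theta,L_\rho]=e_\theta L_\rho-(-1)^{(|\theta|+1)|\rho|}L_\rho e_\theta$ rather than carrying the sign $(-1)^{|\theta||\rho|}$; this does not affect the plan.
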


\begin{proof}
In order to prove \cref{item:HCartan_Hochschild-1}, it suffices to check the following equalities:
  \begin{enumerate}
    \item[(i)] \label{LBedS}
      $L_{\theta} = [B,e_{\theta}] + [d, S_{\theta}] + S_{\delta \theta}$,
    \item[(ii)] \label{de}
      $[d, e_{\theta}] + e_{\delta \theta} = 0$,
    \item[(iii)] \label{BS}
      $[B, S_{\theta}] = 0$.
  \end{enumerate}
  A straightforward computation allows us to deduce that
  \begin{align}
    &[d_1, S_{\theta}] = S_{\delta \theta},
    \\
    &L_{\theta} = (-1)^{|\theta|}e_{\theta} \circ B_n + (d_{2,0} + d_{2,n+1})\circ S_{\theta} \ \ \  \text{and}
    \\
    &B_{n-1} \circ e_{\theta}
      +
      \left( \displaystyle\sum_{i=1}^n  d_{2,i} \right) \circ S_{\theta }
      +
      (-1)^{|\theta|} S_{\theta} \circ d_2
      =
      0.
  \end{align}
  By combining the equalities, we obtain the formula (i).
  The linear maps $d_{1,i}$, $d_{2,i}$ and $e_{\theta}$ satisfy the followings relations:
  \begin{align}
    & \label{d1e}
      d_{1,i}\circ e_{\theta} = \left\{
      \begin{array}{ll}
        (-1)^{|\theta|+1}e_{\theta}\circ (d_{1,0} + d_{1,1}) - e_{\delta \theta} & (i=0),\\
        (-1)^{|\theta|+1}e_{\theta}\circ d_{1,i+1}  & ( 1\leq i \leq n-1),
      \end{array}
                                                      \right.
    \\
    & 
      d_{2,i}\circ e_{\theta} = \left\{
      \begin{array}{ll}
        (-1)^{|\theta|+1}e_{\theta}\circ (d_{2,0} + d_{2,1}) & (i=0),\\
        (-1)^{|\theta|+1}e_{\theta}\circ d_{2,i+1} & ( 1 \leq i \leq n-1).
      \end{array}
                                                     \right.
  \end{align}
  Then, we have the formula (ii) by combining the equalities (\ref{d1e}).
  Since $s\circ s =0$, $t_n \circ s =0$ and
  \begin{equation}\label{Ls}
    L_{\theta , i}\circ s = (-1)^{|\theta|}s\circ L_{\theta , i-1}
  \end{equation}
  for $i \geq 1$, it is immediate to verify the relation (iii). As a consequence, we have \cref{item:HCartan_Hochschild-1}. We consider the case where $A=A'$. In order to prove the assertion \cref{item:HCartan_Hochschild-2}, we show the following equalities
  \begin{itemize}
      \item[(iv)]\label{eL}
      $[e_{\theta}, L_{\rho}] - e_{[\theta, \rho]} = 0$ \ \ \ \text{and}
      \item[(v)]\label{SL}
      $[S_{\theta} , L_{\rho}] - S_{[\theta, \rho]} =0$.
  \end{itemize}
  Observe that $e_{\theta} = d_{2,0}\circ L_{\theta , 1}$. Moreover, we have
  \begin{equation}\label{Ld2}
    L_{\theta , i } \circ d_{2 , 0}
    = \left\{
      \begin{array}{ll}
        (-1)^{|\theta|} d_{2,0}\circ (L_{\theta , 0} + L_{\theta ,1} ) & (i=0), \\
        (-1)^{|\theta|} d_{2,0} \circ L_{\theta , i + 1} & ( i \geq 1)
      \end{array}
    \right.  \ \ \ \text{and}
  \end{equation}
  \begin{equation}\label{LL}
    L_{\theta , i } \circ L_{\rho , j}
    = \left\{
      \begin{array}{ll}
        L_{\theta \rho , i} & (i=j), \\
        (-1)^{|\theta||\rho|} L_{\rho , j} \circ L_{\theta , i} & ( i \neq j).
      \end{array}
    \right.
  \end{equation}
  The equalities \eqref{Ld2} and \eqref{LL} enable us to obtain the formula (iv).
  It is readily seen that 
  \begin{equation}\label{Lt}
    L_{\theta , i} \circ t_n= \left\{
      \begin{array}{ll}
        t_n \circ L_{\theta, n} & (i=0), \\
        t_n \circ L_{\theta, i-1}  & ( 1 \leq i \leq n).
      \end{array}
    \right.
  \end{equation}
  Therefore, we have the formula (v) by combining \eqref{Ls}, \eqref{LL} and \eqref{Lt}. 
\end{proof}

\subsection{Comparison among Cartan calculi}
\label{sect:naturality_of_CartanCalculus}
In this section, we compare two Cartan calculi
defined in \cref{sect:CartanOnSullivan} and \cref{sect:CartanOnHochschild}; see \cref{thm:e_and_L}.
We also show that a Sullivan model induces morphisms of graded Lie algebras
on the homology of Cartan calculi on the Hochschild complexes; see \cref{prop:relate_Cartan_calculi}.

As mentioned in the beginning of Section \ref{sect:AlgCartanCal},
the homotopy Cartan calculi of a Sullivan algebra in Proposition \ref{prop:CartanCalculusSullivanModel}
and the Hochschild complex in Proposition \ref{prop:HCartan_Hochschild} coincide with each other on homology. To see this,
we identify the Hochschild homology $HH_* (\SpaceModel)$ with the homology $H^*(\HochschildModel)$ by the quasi-isomorphism
$\Theta : C_*(\SpaceModel)  \to \HochschildModel$
defined by $\Theta (a_0[a_1| \cdots | a_n]) =
\frac{1}{n!} a_0sa_1\cdots sa_n$; see
\cite[Theorem 2.4]{BV88}.
Here $s$ is the unique derivation on $\HochschildModel$ stated in Section \ref{sect:preliminaries}.
We also recall the morphism
$\Theta' : {\mathcal L} \to  C_*(\SpaceModel)$ defined by
$ \Theta' (a_0sa_1\cdots sa_n)  = a_0\ast [a_1]\ast \cdots \ast[a_n]$, where $\ast$ denotes the shuffle product on the Hochschild complex; see \cite[Section 4]{G-J}.
Observe that $\Theta \circ \Theta' = 1$. Our main result in this section is described as follows.

\begin{thm}\label{thm:e_and_L}
With the same notation as above, one has a commutative diagram
  \[
    \xymatrix@C55pt@R20pt{
      H (\Der (\SpaceModel))
      \ar[r]^-{L_{( \ )}}_-{(\text{resp.} e_{( \ )})}
      \ar[dr]^-(0.7){L_{( \ )}}_-{(\text{resp.} e_{( \ )})}
      &
      \End (HH_*(\SpaceModel))
      \\
      & \Der (H^*(\HochschildModel)),
      \ar[u]_{i}
    }
  \]
  where $i$ is the monomorphism defined by the isomorphism $H(\Theta)$.
\end{thm}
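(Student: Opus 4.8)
The plan is to compare the two calculi at the chain level through the quasi-isomorphism $\Theta\colon C_*(\SpaceModel)\to\HochschildModel$ and then pass to homology. Write $L^{\HochschildModel}_\theta, e^{\HochschildModel}_\theta$ for the Sullivan operators of \cref{prop:CartanCalculusSullivanModel} and $L^{C}_\theta, e^{C}_\theta$ for the Hochschild operators of \cref{prop:HCartan_Hochschild}. Since $H(\Theta)$ is an isomorphism and the vertical map $i$ is, by definition, conjugation by $H(\Theta)$, the monomorphy of $i$ is automatic. The diagonal map is well defined because $L^{\HochschildModel}_\theta$ and $e^{\HochschildModel}_\theta$ are derivations of $\HochschildModel$ commuting with $d$ whenever $\delta\theta=0$, hence induce derivations on $H^*(\HochschildModel)$, while a boundary $\theta=[d,\psi]$ yields a null-homotopic derivation; the top horizontal map is well defined for the same reason applied to the chain maps $L^{C}_\theta, e^{C}_\theta$. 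With these observations, commutativity for $L$ (resp.\ $e$) reduces to the identity $H(L^{\HochschildModel}_\theta\circ\Theta)=H(\Theta\circ L^{C}_\theta)$ (resp.\ the $e$-analogue) in $\Hom(HH_*(\SpaceModel),H^*(\HochschildModel))$. Thus it suffices to prove that $L^{\HochschildModel}_\theta\circ\Theta$ and $\Theta\circ L^{C}_\theta$, and likewise $e^{\HochschildModel}_\theta\circ\Theta$ and $\Theta\circ e^{C}_\theta$, are chain homotopic.

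For the Lie derivative I would first record the relation $[L^{\HochschildModel}_\theta,s]=0$ on $\HochschildModel$, which follows by checking it on the generators $V\oplus\overline{V}$ (both sides being derivations). Consequently $L^{\HochschildModel}_\theta(sa)=(-1)^{\deg{\theta}}s(\theta a)$ for $a\in\SpaceModel$ and $L^{\HochschildModel}_\theta|_{\SpaceModel}=\theta$. Expanding $L^{\HochschildModel}_\theta\bigl(\Theta(a_0[a_1|\cdots|a_n])\bigr)=L^{\HochschildModel}_\theta\bigl(\tfrac1{n!}a_0\,sa_1\cdots sa_n\bigr)$ by the derivation property and matching the Koszul signs $(-1)^{\deg{\theta}\varepsilon_i}(-1)^{\deg{\theta}}=(-1)^{\deg{\theta}(\varepsilon_i+1)}$ against the definition of $L^{C}_\theta$ shows that $L^{\HochschildModel}_\theta\circ\Theta=\Theta\circ L^{C}_\theta$ holds on the nose. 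This settles the $L$-triangle.

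The contraction operator is the subtle case, and I expect it to be the main obstacle. Using $e^{\HochschildModel}_\theta(sa)=(-1)^{\deg{\theta}}\theta a$ and $e^{\HochschildModel}_\theta|_{\SpaceModel}=0$, a direct computation gives
\[
e^{\HochschildModel}_\theta\bigl(\Theta(a_0[a_1|\cdots|a_n])\bigr)=\tfrac1{n!}\sum_{i=1}^n(-1)^{(\deg{\theta}+1)\varepsilon_i+\deg{\theta}}\,a_0\,sa_1\cdots sa_{i-1}\,\theta(a_i)\,sa_{i+1}\cdots sa_n,
\]
so the Sullivan contraction symmetrizes over all tensor slots, whereas $\Theta\circ e^{C}_\theta$ contracts only the first slot $a_1$; these chain maps do not agree on the nose. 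One checks that the $i=1$ summand above coincides with the single term $\Theta(e^{C}_\theta(a_0[a_1|\cdots|a_n]))$ up to the factor $n$, so the difference $e^{\HochschildModel}_\theta\circ\Theta-\Theta\circ e^{C}_\theta$ is precisely the ``symmetrization minus $n$ times the first slot'' operator. The plan is to exhibit an explicit chain homotopy $h\colon C_*(\SpaceModel)\to\HochschildModel$ with $e^{\HochschildModel}_\theta\circ\Theta-\Theta\circ e^{C}_\theta=[d,h]$, built from the homotopy $S$ of \cref{prop:HCartan_Hochschild} together with the contracting homotopy underlying the proof that $\Theta$ is a quasi-isomorphism in \cite[Theorem 2.4]{BV88}. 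Verifying that $[d,h]$ equals this difference, with all signs, is where the real work lies.

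As a consistency check, both calculi satisfy their magic formulae, so on homology $[H(s),H(e^{\HochschildModel}_\theta)]=H(L^{\HochschildModel}_\theta)$ and, since $H(\Theta)$ intertwines $H(B)$ with $H(s)$ by \cite[Theorem 2.4]{BV88} and the $L$-operators already agree, also $[H(s),\,H(\Theta)H(e^{C}_\theta)H(\Theta)^{-1}]=H(L^{\HochschildModel}_\theta)$. Hence the two candidates for the induced contraction differ by an operator commuting with $H(s)$; the homotopy $h$ is what pins this ambiguity down to $0$ and completes the argument for $e$.
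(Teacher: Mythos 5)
Your $L$-half is correct and is essentially the paper's argument: the identity $L^{\HochschildModel}_\theta\circ\Theta=\Theta\circ L^{C}_\theta$ holds on the nose, verified on generators using $[L^{\HochschildModel}_\theta,s]=0$, exactly as in the paper's proof of the left-hand square of \eqref{diag:L_e}. Your diagnosis of the $e$-half is also accurate: $e^{\HochschildModel}_\theta\circ\Theta$ symmetrizes over all tensor slots while $\Theta\circ e^{C}_\theta$ contracts only the first, so the two chain maps genuinely differ before passing to homology.

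The gap is that your proposed resolution for $e$ is never carried out, and it cannot be patched by soft arguments. You reduce everything to producing a chain homotopy $h$ with $e^{\HochschildModel}_\theta\circ\Theta-\Theta\circ e^{C}_\theta=[d,h]$, but you only announce a plan to build $h$ from $S$ and the contracting homotopy of \cite[Theorem 2.4]{BV88} and concede that "the real work lies" in verifying it; no formula or verification is given. Note that you cannot shortcut this by observing that over a field two chain maps agreeing on homology are homotopic: that the two maps agree on homology is precisely the statement being proven, so invoking it is circular. Your closing consistency check also does not close the gap, since it only pins down the discrepancy up to operators commuting with $H(s)$, as you acknowledge. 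The paper avoids the homotopy entirely by a different device: it uses the section $\Theta'\colon\HochschildModel\to C_*(\SpaceModel)$ given by the shuffle product, $\Theta'(a_0sa_1\cdots sa_n)=a_0\ast[a_1]\ast\cdots\ast[a_n]$, which satisfies $\Theta\circ\Theta'=1$, and verifies the \emph{exact} chain-level identity $\Theta\circ e^{C}_\theta\circ\Theta'=e^{\HochschildModel}_\theta$ by expanding the shuffle over $\mathfrak{S}_k$ — the symmetrization you observed on the $\HochschildModel$-side is exactly reproduced by summing the first-slot contraction over all shuffles. Since $H(\Theta')=H(\Theta)^{-1}$, this yields the commutativity of the $e$-triangle on homology with no homotopy needed. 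Replacing your unconstructed $h$ by this computation with $\Theta'$ (or actually constructing and verifying $h$) is required to complete your argument.
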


\begin{proof}
  In order to prove the assertion, it suffices to show that the squares  
  \begin{equation}\label{diag:L_e}
    \xymatrix@C30pt@R15pt
    {
      C_*(\SpaceModel) \ar[d]_-{L_{\theta}}
      \ar[r]^-{\Theta}
      &
      \HochschildModel
      \ar[d]^-{L_\theta}
      \\
      C_*(\SpaceModel)
      \ar[r]_-\Theta
      &
      \HochschildModel,
    }
    \hspace{1em}
    \xymatrix@C30pt@R15pt
    {
      C_*(\SpaceModel)
      \ar[d]_-{e_{\theta}}
      &
      \HochschildModel
      \ar[l]_-{\Theta'}
      \ar[d]^-{e_\theta}
      \\
      C_*(\SpaceModel)
      \ar[r]_-\Theta
      &
      \HochschildModel
    }
  \end{equation}
  are commutative for $\theta \in \Der (\SpaceModel)$.
  Observe that $[L_{\theta}, s]=0$ in $\End (\HochschildModel)$ by Definition \ref{defn:derivationSullivan}.
  Then, we get
  \begin{align}
    &\Theta \circ L_{\theta}(a_0[a_1|a_2|\cdots |a_k])\\
    =&\dfrac{1}{k!}\left(
       \theta(a_{0})sa_1 sa_2 \cdots sa_k
       +\sum_{i=1}^k (-1)^{|\theta|(\varepsilon_i + 1)}a_0 sa_1 \cdots s\theta (a_i) \cdots sa_k
       \right)\\
    =&\dfrac{1}{k!}\left(
       \theta(a_{0})sa_1 sa_2 \cdots sa_k
       +\sum_{i=1}^k (-1)^{|\theta|\varepsilon_i}a_0 sa_1 \cdots L_{\theta}(sa_i) \cdots sa_k
       \right)
    \\
    =& L_{\theta} \circ \Theta (a_0[a_1|a_2|\cdots |a_k])
  \end{align}
  which implies the commutativity of the left-hand side square.
  On the other hand, given $a\bar{v}_1\bar{v}_2 \cdots \bar{v}_k \in \HochschildModel$ for $a\in \SpaceModel$ and $v_i \in V$.
  The induction on $k$ enables us to deduce that the shuffle product on $C_*(\SpaceModel)$ satisfies
  \[
    \Theta'( a\bar{v}_1\bar{v}_2 \cdots \bar{v}_k)
    =
    a * [v_1] * [v_2] * \cdots * [v_k]
    = \sum_{\sigma \in \mathfrak{S}}(-1)^{\varepsilon(\sigma)}a[v_{\sigma(1)}|v_{\sigma(2)}|\cdots | v_{\sigma(k)}],
  \]
  where $\mathfrak{S}_k$ is the symmetric group of degree $k$ and
  $(-1)^{\varepsilon (\sigma)}$ is the Koszul sign defined by the equality
  $(-1)^{\varepsilon (\sigma)} \bar{v}_{\sigma(1)} \bar{v}_{\sigma(2)} \cdots \bar{v}_{\sigma(k)} = \bar{v}_1\bar{v}_2 \cdots \bar{v}_k$ in $\HochschildModel$. Thus we have
  \begin{align}
    &\Theta \circ e_{\theta} \circ \Theta' (a\bar{v}_1\bar{v}_2 \cdots \bar{v}_k)
    \\
    =& \sum_{\sigma \in \mathfrak{S}_k}
       (-1)^{\varepsilon(\sigma) +  |\theta||a|+|\theta|+|a|}
       \dfrac{1}{(k-1)!}a\theta(v_{\sigma (1)}) \bar{v}_{\sigma(2)} \cdots \bar{v}_{\sigma(k)}
    \\
    =& \sum_{i=1}^k
       (-1)^{ |\theta||a|+|\theta|+|a| + |\bar{v}_i|(|\bar{v}_1| + \cdots + |\bar{v}_{i-1}|)} a\theta(v_i)\bar{v}_1 \cdots \bar{v}_{i-1}\bar{v}_{i+1} \cdots \bar{v}_k
    \\
    =& \sum_{i=1}^k
       (-1)^{ |\theta||a|+|a| + (|\theta|+1)(|\bar{v}_1| + \cdots + |\bar{v}_{i-1}|)} a\bar{v}_1 \cdots \bar{v}_{i-1}  e_{\theta}(\bar{v}_i) \bar{v}_{i+1} \cdots \bar{v}_k
   \\
    =& e_{\theta} (a\bar{v}_1\bar{v}_2 \cdots \bar{v}_k ).
  \end{align}
  This yields the commutativity of the right-hand side square.
\end{proof}




Let \(\varphi\colon (\wedge V, d)\xrightarrow{\simeq} (A, d)\) be
a (not necessarily connected) Sullivan model
of an augmented CDGA \((A, d)\).
In the rest of this section,
we show that the quasi-isomorphism $\varphi$ induces a morphism of Lie algebras between the homology Lie algebras of derivations. Moreover, we relate two homotopy Cartan calculi
\[
(\Der(A), \End(C_*(A)), e, L, S, 0) \ \ \text{and} \ \ (\Der(\wedge V), \End(C_*(\wedge V)), e, L, S, 0).
\]
We refer the reader to \cref{prop:HCartan_Hochschild} \cref{item:HCartan_Hochschild-2} for the calculi.

\begin{prop}
  \label{prop:relate_Cartan_calculi}
  There exist a homomorphism \(H^*(\Der(A))\to H^*(\Der(\wedge V))\)
  and an isomorphism  \(H^*(\End(C_*(A)))\xrightarrow{\cong} H^*(\End(C_*(\wedge V)))\)
  of graded Lie algebras such that the following diagrams commute:
  \begin{equation}
    \xymatrix@C20pt@R20pt{
      H^*(\Der(\wedge V)) \ar[r]^-{H(L)} & H^*(\End(C_*(\wedge V))) \\
      H^*(\Der(A)) \ar[r]^-{H(L)} \ar[u] & H^*(\End(C_*(A))), \ar[u]^-{\cong}
    }
    \xymatrix@C20pt@R20pt{
      H^*(\Der(\wedge V)) \ar[r]^-{H(e)} & H^{*+1}(\End(C_*(\wedge V))) \\
      H^*(\Der(A)) \ar[r]^-{H(e)} \ar[u] & H^{*+1}(\End(C_*(A))). \ar[u]^-{\cong}
    }
  \end{equation}
\end{prop}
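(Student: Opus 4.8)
The plan is to factor the comparison map through the complex of \(\varphi\)-derivations \(\Der(\SpaceModel, A)\), with \(A\) regarded as a \(\SpaceModel\)-module via \(\varphi\). Postcomposition and precomposition with \(\varphi\) give chain maps (of degree \(0\), commuting with the differentials \([d,\text{--}]\))
\[
\varphi_*\colon \Der(\SpaceModel)\to\Der(\SpaceModel, A),\ \xi\mapsto\varphi\circ\xi, \qquad \varphi^*\colon \Der(A)\to\Der(\SpaceModel, A),\ \theta\mapsto\theta\circ\varphi.
\]
Because \(\SpaceModel\) is a Sullivan algebra (so that the relevant module of Kähler differentials is semifree), \(\varphi_*\) is a quasi-isomorphism; this is the standard fact underlying the well-definedness of André--Quillen cohomology, see \cite{B-L05, FLS}. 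I then set \(\widetilde\varphi := H(\varphi_*)^{-1}\circ H(\varphi^*)\colon H^*(\Der(A))\to H^*(\Der(\SpaceModel))\). For the right-hand vertical map, note that \(\varphi\) is a quasi-isomorphism of DGAs, hence \(C_*(\varphi)\colon C_*(\SpaceModel)\to C_*(A)\) is a quasi-isomorphism by invariance of Hochschild homology \cite{BV88}, and therefore a chain homotopy equivalence since we work over a field. Conjugation by \(C_*(\varphi)\) yields an isomorphism \(H^*(\End(C_*(A)))\xrightarrow{\cong}H^*(\End(C_*(\SpaceModel)))\) of graded algebras, in particular of graded Lie algebras; equivalently, over the base field \(H^*(\End(C_*(B)))\cong\End(HH_*(B))\) naturally, and the map is conjugation by the induced isomorphism \(HH_*(\SpaceModel)\cong HH_*(A)\).

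The heart of the proof is to show that \(\widetilde\varphi\) is a morphism of graded Lie algebras, and this is the step I expect to be the crux. The obstacle is that \(\Der(\SpaceModel, A)\) carries no Lie bracket, so neither \(\varphi_*\) nor \(\varphi^*\) is individually a Lie map and Lie-ness cannot be read off from the defining zig-zag. One also cannot transport it across the square proved below, since \(H(L)\) is far from injective: the canonical identification \(H^*(\End(C_*(\SpaceModel)))\cong\End(HH_*(\SpaceModel))\) carries \(H(L)\) to \([\xi]\mapsto(L_\xi)_*\), which is not injective by \cref{thm:e_and_L}. To remedy this I introduce the \emph{derivations of the map} \(\varphi\),
\[
\Der_\varphi := \{(\xi,\theta)\in\Der(\SpaceModel)\times\Der(A) : \varphi\circ\xi=\theta\circ\varphi\},
\]
which is a sub dg Lie algebra of the product: a direct check gives \(\varphi[\xi_1,\xi_2]=[\theta_1,\theta_2]\varphi\) whenever \(\varphi\xi_i=\theta_i\varphi\). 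The projections \(p_1\colon\Der_\varphi\to\Der(\SpaceModel)\) and \(p_2\colon\Der_\varphi\to\Der(A)\) are morphisms of dg Lie algebras with \(\varphi_*\circ p_1=\varphi^*\circ p_2\), so \(\Der_\varphi\) is the fibre product of \(\varphi_*\) and \(\varphi^*\). Taking the homotopy fibre product of this diagram of dg Lie algebras (or, when \(\varphi\) is chosen surjective, the strict \(\Der_\varphi\), in which case \(p_2\) is the base change of the acyclic fibration \(\varphi_*\) and \(\ker p_2\cong\ker\varphi_*\) is acyclic) makes \(p_2\) a quasi-isomorphism because \(\varphi_*\) is one. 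Consequently \(\widetilde\varphi=H(p_1)\circ H(p_2)^{-1}\) is the composite of a Lie morphism with the inverse of a Lie isomorphism, hence a morphism of graded Lie algebras, as required.

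It remains to verify the two squares, and here the key input is the naturality of the Hochschild operators of \cref{prop:HCartan_Hochschild}\,\cref{item:HCartan_Hochschild-1} with respect to \(\varphi\). From the explicit formulas one reads off, for \(\theta\in\Der(A)\) and \(\xi\in\Der(\SpaceModel)\),
\[
L_{\theta\circ\varphi}=L_\theta\circ C_*(\varphi),\quad L_{\varphi\circ\xi}=C_*(\varphi)\circ L_\xi,\quad e_{\theta\circ\varphi}=e_\theta\circ C_*(\varphi),\quad e_{\varphi\circ\xi}=C_*(\varphi)\circ e_\xi,
\]
where on the left \(L\) and \(e\) denote the \(\Hom(C_*(\SpaceModel),C_*(A))\)-valued operators attached to the \(\varphi\)-derivations \(\theta\circ\varphi,\ \varphi\circ\xi\in\Der(\SpaceModel,A)\); these are chain maps by the remark following \cref{defn:preCartan_generalization}. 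Writing \(g_*=C_*(\varphi)\circ(\text{--})\) and \(g^*=(\text{--})\circ C_*(\varphi)\) for the induced quasi-isomorphisms into \(\Hom(C_*(\SpaceModel),C_*(A))\), these identities say exactly that \(L\) (respectively \(e\)) intertwines the zig-zag \((\varphi_*,\varphi^*)\) with the zig-zag \((g_*,g^*)\), and that the right vertical isomorphism \((Hg_*)^{-1}\circ Hg^*\) coincides with the conjugation isomorphism of the first paragraph. A direct chase, using \(H(L)\circ H(\varphi_*)=Hg_*\circ H(L)\) and \(H(L)\circ H(\varphi^*)=Hg^*\circ H(L)\), then gives \(H(L)\circ\widetilde\varphi=\big((Hg_*)^{-1}Hg^*\big)\circ H(L)\), which is the commutativity of the first square; the second square follows identically, the degree shift of \(e\) accounting for the target \(H^{*+1}(\End(C_*(-)))\).
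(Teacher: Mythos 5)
Your surjective-case argument is correct and is essentially the paper's proof in a cleaner package: the paper also defines the comparison via the zig-zag \(\Der(\SpaceModel)\xrightarrow{\varphi_*}\Der(\SpaceModel,A)\xleftarrow{\varphi^*}\Der(A)\), uses the same naturality identities for \(L\) and \(e\) (its first commutative diagram), invokes \cref{cor:Der_quasi-iso} for \(\varphi_*\), and proves Lie-compatibility by lifting cocycles \(f,g\in\Der(A)\) to cocycles \(f',g'\in\Der(\SpaceModel)\) with \(f\varphi=\varphi f'\), \(g\varphi=\varphi g'\) and checking \([f,g]\varphi=\varphi[f',g']\) --- which is exactly the statement that \((f',f)\), \((g',g)\) are cocycles in your dg Lie algebra \(\Der_\varphi\) and that \(\Der_\varphi\) is closed under the bracket. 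Your observation that \(p_2\) is the base change of the acyclic fibration \(\varphi_*\) (surjectivity of \(\varphi_*\) following from freeness of \(\SpaceModel\)) is a valid repackaging of that lifting argument.

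The genuine gap is the general, non-surjective case, which the proposition must cover since \(\varphi\) is the \emph{given} Sullivan model and cannot simply be ``chosen surjective.'' Your fallback --- ``taking the homotopy fibre product of this diagram of dg Lie algebras'' --- does not make sense: the apex \(\Der(\SpaceModel,A)\) of the cospan carries no Lie bracket (as you yourself stress at the start of your second paragraph), so the cospan is not a diagram of dg Lie algebras, and the homotopy pullback of the underlying complexes has a homotopy component living in \(\Der(\SpaceModel,A)^{*-1}\), where there is no composition and hence no evident bracket making \(p_1\) and \(p_2\) Lie morphisms. Meanwhile the strict \(\Der_\varphi\) fails here because \(\varphi_*\) need not be surjective, so \(p_2\) need not be a quasi-isomorphism. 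The paper closes exactly this gap with the surjective trick: factor \(\varphi\) as \((\wedge V,d)\xrightarrow{i}(\wedge V,d)\otimes(E(A),\delta)\xrightarrow{\varphi'}(A,d)\) with \(E(A)\) contractible, let \(r\) be the surjective homotopy inverse of \(i\), and apply the surjective case twice (to \(r\) and to \(\varphi'\)), stacking the resulting commutative squares and using \cref{cor:Der_quasi-iso} \cref{item:Der_quasi-iso_domain} to invert \(r^*\). To repair your proof you need this (or an equivalent) reduction; note that you may then either verify that the composite of the two zig-zag maps agrees with your \(H(\varphi_*)^{-1}\circ H(\varphi^*)\), or simply take that composite as the definition of the homomorphism, which suffices because the proposition only asserts the existence of a Lie homomorphism making the squares commute.
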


\begin{proof}
  First we prove the proposition in the case where
  \(\varphi\colon (\wedge V, d)\to (A, d)\) is a \textit{surjective} quasi-isomorphism.
  The morphism \(\varphi\) of CDGAs gives rise to a commutative diagram
  \begin{equation}
    \xymatrix@C30pt@R20pt{
      H(\Der(\wedge V)) \ar[r]^-{H(L), H(e)} \ar[d]^-{\cong}_-{\varphi_*} & H(\End(C_*(\wedge V))) \ar[d]_-{\cong}^-{\varphi_*} \\
      H(\Der(\wedge V, A)) \ar[r]^-{H(L), H(e)} & H(\End(C_*(\wedge V), C_*(A))) \\
      H(\Der(A)) \ar[r]^-{H(L), H(e)} \ar[u]^-{\varphi^*} & H(\End(C_*(A))). \ar[u]^-{\cong}_-{\varphi^*}
    }
  \end{equation}
  Since the functor \(C_*(-)\) preserves quasi-isomorphisms, it follows that
  the right vertical maps are isomorphisms.
  \cref{cor:Der_quasi-iso} \cref{item:Der_quasi-iso_codomain} implies that
  the upper left map \(\varphi_*\) is an isomorphism.
  Now we need to prove that the two vertical composites are morphisms of Lie algebras.
  Since the right one can be proved similarly to (and easier than) the left one,
  we give only a proof for the left one.

  It follows from \cref{prop:lambda_map} that the map \(\varphi_* \colon
  \Der(\wedge V) \to \Der(\wedge V, A)\) is a surjective quasi-isomorphism. Then, for any elements \([f], [g] \in H(\Der(A))\),
  there are cocycles \(f', g'\in\Der(\wedge V)\) such that
  \(f\varphi=\varphi f'\) and \(g\varphi=\varphi g'\).
  Therefore, we see that
  \((\varphi_*)^{-1}\circ\varphi^*[f] = [f']\) and \((\varphi_*)^{-1}\circ\varphi^*[g] = [g']\).
  By a straightforward computation, we have \([f, g]\varphi = \varphi[f', g']\)
  and this completes the proof for the particular  case.

  Next we deal with a general case.
  To this end, the ``surjective trick'' is applicable. In fact,
  the map \(\varphi\colon (\wedge V, d)\to (A, d)\) factors as
  \((\wedge V, d)\xrightarrow[\simeq]{i}(\wedge V, d)\otimes (E(A), \delta)\xrightarrow[\simeq]{\varphi'}(A, d)\) with
   a contractible algebra \((E(A), \delta)\) 
   and
  the canonical maps \(i\) and \(\varphi'\); see \cite[Section 12 (b)]{FHT1} for details.
  Now we have a homotopy inverse
  \(r\colon (\wedge W, d)\to (\wedge V, d)\) of \(i\) defined by sending \(A\) to zero,
  where  \((\wedge W, d) = (\wedge V, d)\otimes (E(A), \delta)\).
  Observe that \(W = V\oplus A\oplus \delta A\).
  Then \(r\) and \(\varphi'\) are \textit{surjective} quasi-isomorphisms
  and hence the first half of the proof gives the following diagram:
  \begin{equation}
    \xymatrix@C30pt@R20pt{
      H(\Der(\wedge V)) \ar[r]^-{H(L), H(e)} \ar[d]^-{\cong}_-{r^*} & H(\End(C_*(\wedge V))) \ar[d]_-{\cong}^-{r^*} \\
      H(\Der(\wedge W, \wedge V)) \ar[r]^-{H(L), H(e)} & H(\End(C_*(\wedge W), C_*(\wedge V))) \\
      H(\Der(\wedge W)) \ar[r]^-{H(L), H(e)} \ar[d]^-{\cong}_-{\varphi_*} \ar[u]_-{\cong}^-{r_*} & H(\End(C_*(\wedge W))) \ar[d]_-{\cong}^-{\varphi_*} \ar[u]^-{\cong}_-{r_*}\\
      H(\Der(\wedge W, A)) \ar[r]^-{H(L), H(e)} & H(\End(C_*(\wedge W), C_*(A))) \\
      H(\Der(A)) \ar[r]^-{H(L), H(e)} \ar[u]^-{\varphi^*} & H(\End(C_*(A))). \ar[u]^-{\cong}_-{\varphi^*}
    }
  \end{equation}
  The upper left map \(r^*\) is an isomorphism with the inverse $i^*$ by \cref{cor:Der_quasi-iso} \cref{item:Der_quasi-iso_domain}.
  This completes the proof of the general case.
\end{proof}

\subsection{Injectivity of the contraction \(e\) on homology}\label{sect:Faithfulness_e}
Let \((\wedge V, d)\) be a simply-connected Sullivan algebra
whose homology satisfies the Poincar\'e duality. We assume that the fundamental class is in
\(H^m(\wedge V)\).
In this section,
we study properties of the derivation
\(H(e_\theta)\colon H(\HochschildModel)\to H(\HochschildModel)\)
and show injectivity of this map. We recall the differential graded module \(\HochschildModel_{(k)}\) defined in \cref{sect:CartanOnSullivan}.

\begin{thm}
  \label{thm:e_fundamental_class} Let
  \((\Der(\wedge V), \Der(\HochschildModel), e, L, 0, 0)\) be the homotopy Cartan calculus in \cref{prop:CartanCalculusSullivanModel}.
  For any \([\theta]\neq 0\in H^{-n-1}(\Der(\wedge V))\),
  there exists a cohomology class \([\alpha]\in H^{m+n}(\HochschildModel_{(1)})\) such that
  \(H(e_\theta)[\alpha]\) is the same as the fundamental class in
  \(H^m(\wedge V)\subset H^m(\HochschildModel)\).
\end{thm}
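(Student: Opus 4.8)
The plan is to deduce the statement from the non-degeneracy of the contraction pairing, which I will in turn reduce to Poincaré duality for \(\SpaceModel\) through the module of K\"ahler differentials.

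First I fix a cocycle \(\theta\) representing \([\theta]\neq 0\) and a cocycle \(\omega\) representing the fundamental class. Since \(\dlie\theta=0\), the relation \([\done,e_\theta]+e_{\dlie\theta}=0\) of \cref{prop:CartanCalculusSullivanModel} shows that \(e_\theta\) is a chain map; as a derivation it lowers the word length in \(\overline{V}\) by one, so it restricts to a chain map \(e_\theta\colon \HochschildModel_{(1)}\to \HochschildModel_{(0)}=\SpaceModel\) of degree \(-n\). Because \(H^m(\SpaceModel)\cong\K\) is one-dimensional by Poincaré duality, it suffices to prove that \(H(e_\theta)\colon H^{m+n}(\HochschildModel_{(1)})\to H^m(\SpaceModel)\) is nonzero; any preimage of a nonzero multiple of \(\omega\) can then be rescaled. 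Choosing a chain-level fundamental-class functional \(\tau\colon\SpaceModel\to\K\) with \(\tau\circ\done=0\) and \(\tau(\omega)=1\), non-vanishing of \(H(e_\theta)\) onto the top class is equivalent to the cocycle \(\tau\circ e_\theta\in\Hom(\HochschildModel_{(1)},\K)\) being cohomologically nonzero. Thus I am reduced to showing that \(\Phi\colon\Der(\SpaceModel)\to\Hom(\HochschildModel_{(1)},\K)\), \(\theta\mapsto\tau\circ e_\theta\), induces a monomorphism (in fact an isomorphism) on cohomology; that \(\Phi\) is a chain map follows from \(\tau\circ\done=0\) together with the same relation of \cref{prop:CartanCalculusSullivanModel}.

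To identify \(\Phi\), I would pass to K\"ahler differentials. Writing \(\Omega=\Omega^1_{\SpaceModel}\) for the \(\SpaceModel\)-module of K\"ahler differentials, which is semifree because \(\SpaceModel\) is a Sullivan algebra, one has the standard identifications \(\Der(\SpaceModel)=\Hom_{\SpaceModel}(\Omega,\SpaceModel)\) and \(\HochschildModel_{(1)}\cong s\Omega\) as differential graded \(\SpaceModel\)-modules via \(\overline{v}\leftrightarrow s\,Dv\). By the Hom--tensor adjunction this gives \(\Hom(\HochschildModel_{(1)},\K)\cong\Hom_{\SpaceModel}(\Omega,\Hom_\K(\SpaceModel,\K))\) up to suspension. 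Now Poincaré duality of \(H(\SpaceModel)\) is exactly the assertion that \(D\colon\SpaceModel\to\Hom_\K(\SpaceModel,\K)\), \(D(b)=\tau(b\cdot{-})\), is a quasi-isomorphism of differential graded \(\SpaceModel\)-modules (using finite type for universal coefficients). Since \(\Omega\) is semifree, \(\Hom_{\SpaceModel}(\Omega,{-})\) preserves quasi-isomorphisms, so \(\Hom_{\SpaceModel}(\Omega,D)\) is a quasi-isomorphism; unwinding the identifications, it agrees up to sign and suspension with \(\Phi\). Hence \(H(\Phi)\) is an isomorphism, and in particular injective.

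Finally, a degree count shows that \(\tau\circ e_\theta\) can pair nontrivially only with \(H^{m+n}(\HochschildModel_{(1)})\), so injectivity of \(H(\Phi)\) produces \([\alpha]\in H^{m+n}(\HochschildModel_{(1)})\) with \(\tau(e_\theta(\alpha))\neq 0\), that is, \(H(e_\theta)[\alpha]=c\,[\omega]\) with \(c\neq 0\); replacing \(\alpha\) by \(c^{-1}\alpha\) yields the fundamental class. I expect the main obstacle to be the bookkeeping in the middle step: constructing the chain-level duality map \(D\) and, above all, checking that \(\Hom_{\SpaceModel}(\Omega,D)\) really is \(\Phi\) after the suspension and adjunction identifications, with all Koszul signs correct. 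The remaining inputs—that \(e_\theta\) is a chain map, that \(\Omega\) is semifree, and that \(\Hom_{\SpaceModel}(\Omega,{-})\) preserves quasi-isomorphisms—are standard.
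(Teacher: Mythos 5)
Your proof is correct and, despite the K\"ahler-differential packaging, it is essentially the paper's own argument: your map \(\Phi(\theta)=\tau\circ e_\theta\) is exactly the paper's composite \(\pd\circ\lambda\), since the paper's isomorphism \(\lambda\colon\Der(\wedge V)\xrightarrow{\cong}\Hom_{\wedge V}(\HochschildModel_{(1)},\wedge V)\) is your identification \(\HochschildModel_{(1)}\cong s\Omega^1_{\wedge V}\) combined with \(\Der(\wedge V)=\Hom_{\wedge V}(\Omega^1_{\wedge V},\wedge V)\), and the paper's \(\pd\) is your \(\Hom_{\wedge V}(\Omega^1_{\wedge V},D)\) followed by the same adjunction, with the chain-level duality quasi-isomorphism \(D\), semifreeness of \(\HochschildModel_{(1)}\), and preservation of quasi-isomorphisms by \(\Hom_{\wedge V}(\text{semifree},-)\) used identically in both. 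Your closing step (injectivity of \(H(\Phi)\) plus universal coefficients over a field and rescaling by \(c^{-1}\)) is precisely \cref{prop:non_deg_pairing}\,\cref{item:eval_to_fundamental_class} of the paper.
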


This theorem immediately implies the following corollary.

\begin{cor}
  \label{cor:e_faithful}
  Suppose that  \(H^*(\wedge V)\) satisfies the Poincar\'e duality. Then
  the map
  \(H^*(\Der(\wedge V))\to \Der^{*+1}(H(\HochschildModel))\)
  induced by the contraction
  \(e\colon \Der^*(\wedge V)\to\Der^{*+1}(\HochschildModel)\)
  is a monomorphism and hence so is
  the map
  \(H^*(e)\colon H^*(\Der(\wedge V))\to H^{*+1}(\Der(\HochschildModel))\).
\end{cor}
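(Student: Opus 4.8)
The plan is to obtain \cref{cor:e_faithful} as a direct consequence of \cref{thm:e_fundamental_class}, so the real task is only to identify the map whose injectivity is asserted and then read off the conclusion. First I would record that the contraction descends to cohomology. By \cref{prop:CartanCalculusSullivanModel} we have $[d,e_\theta]+e_{\delta\theta}=0$ in $\Der(\HochschildModel)$, where $\delta\theta=[d,\theta]$. Hence a cocycle $\theta$ gives a chain map $e_\theta$ on $\HochschildModel$, while a coboundary $\theta=\delta\phi$ gives $e_\theta=-[d,e_\phi]$, which is null-homotopic. Since each $e_\theta$ is a derivation of $\HochschildModel$, its induced map on cohomology is a derivation of the algebra $H(\HochschildModel)$, and the previous remark shows it depends only on $[\theta]$. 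Thus $[\theta]\mapsto H(e_\theta)$ is a well-defined linear map
\[
\bar e\colon H^*(\Der(\wedge V))\longrightarrow \Der^{*+1}(H(\HochschildModel)),
\]
which is exactly the map ``induced by the contraction $e$'' in the statement.

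Because $\bar e$ is linear, showing it is a monomorphism amounts to showing that its kernel is trivial, and this is precisely the content of \cref{thm:e_fundamental_class}. Indeed, for any $[\theta]\neq 0$ in $H^{-n-1}(\Der(\wedge V))$ the theorem produces a class $[\alpha]\in H^{m+n}(\HochschildModel_{(1)})$ with $H(e_\theta)[\alpha]$ equal to the fundamental class in $H^m(\wedge V)\subset H^m(\HochschildModel)$, which is nonzero by the Poincar\'e duality hypothesis. Hence $\bar e([\theta])=H(e_\theta)\neq 0$, so $\ker\bar e=0$ and $\bar e$ is injective.

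For the remaining claim I would factor $\bar e$ through the chain-level contraction. The sign relation above shows that $e\colon (\Der(\wedge V),[d,\text{--}])\to(\Der(\HochschildModel),[d,\text{--}])$ is a chain map of degree $+1$, hence induces $H^*(e)\colon H^*(\Der(\wedge V))\to H^{*+1}(\Der(\HochschildModel))$. Composing with the canonical evaluation $\rho\colon H^{*+1}(\Der(\HochschildModel))\to \Der^{*+1}(H(\HochschildModel))$, $[\psi]\mapsto H(\psi)$, which is well defined since a null-homotopic derivation induces $0$ on cohomology, recovers $\bar e=\rho\circ H^*(e)$. Injectivity of the composite $\bar e$ then forces injectivity of its first factor $H^*(e)$, which is the second assertion.

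I do not expect a genuine obstacle here: the substantive work, namely constructing the class $[\alpha]$ that detects the fundamental class, is carried entirely by \cref{thm:e_fundamental_class}. The only points to verify within the corollary are the bookkeeping ones, that is, well-definedness of $\bar e$ and the factorization $\bar e=\rho\circ H^*(e)$, both of which are routine once the sign relation $[d,e_\theta]=-e_{\delta\theta}$ is in hand.
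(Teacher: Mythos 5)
Your proposal is correct and takes essentially the same approach as the paper, which states that \cref{cor:e_faithful} follows immediately from \cref{thm:e_fundamental_class} (nonzero classes are detected because $H(e_\theta)$ hits the fundamental class). The only difference is that you spell out the routine bookkeeping the paper leaves implicit, namely the well-definedness of the induced map $\bar e$ via the relation $[d,e_\theta]=-e_{\delta\theta}$ and the factorization $\bar e=\rho\circ H^*(e)$ yielding injectivity of $H^*(e)$.
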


Thanks to \cref{cor:e_faithful} and \cref{prop:Lie_e} below, we have \cref{thm:e-inj}.
It is expected that the contraction operator $e$ is injective for more general spaces.

The rest of this section is devoted to proving  \cref{thm:e_fundamental_class}.
Let \((\wedge V, d)_*\) be the linear dual of \((\wedge V, d)\) and
\(D\colon (\wedge V, d)\to (\wedge V, d)_*\) the duality map;
that is, a quasi-isomorphism of \((\wedge V, d)\)-modules of degree \((-m)\)
given by the cap product with the representing cycle of the fundamental class.
For a (non-negative) integer \(k\), define a quasi-isomorphism \(\pd\) by the composition
\begin{equation}
  \pd\colon
  \Hom_{\wedge V}(\HochschildModel_{(k)}, \wedge V)
  \xrightarrow[\simeq]{D_*} \Hom_{\wedge V}(\HochschildModel_{(k)}, (\wedge V)_*)
  \xrightarrow[\cong]{\mathrm{adjoint}}\Hom(\HochschildModel_{(k)}, \Q)
  =(\HochschildModel_{(k)})_*
\end{equation}
and denote its adjoint by
\(\ad(\pd)\colon \Hom_{\wedge V}^{-n}(\HochschildModel_{(k)}, \wedge V)\otimes\HochschildModel_{(k)}^{m+n}\to\Q\).

By a straightforward computation, we have

\begin{lem}
  \label{lem:pd_and_D}
  For any integer \(n\) and \(k\), we have a commutative diagram
  \begin{equation}
    \xymatrix@C20pt@R20pt{
      \Hom_{\wedge V}^{-n}(\HochschildModel_{(k)}, \wedge V)\otimes \HochschildModel_{(k)}^{m+n} \ar[d]^-{\ev} \ar[r]^-{\ad(\pd)} & \Q \\
      (\wedge V)^m \ar[r]^-{D} & ((\wedge V)_*)^0. \ar[u]_-{\ev_1}^-{\cong}
    }
  \end{equation}
\end{lem}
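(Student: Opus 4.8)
The plan is to prove commutativity by a direct computation at the level of elements, tracing a generator $f\otimes x$ through both paths of the square and checking that the two resulting scalars agree. So I would fix a $\wedge V$-linear map $f\in\Hom_{\wedge V}^{-n}(\HochschildModel_{(k)}, \wedge V)$ and an element $x\in\HochschildModel_{(k)}^{m+n}$; the degree bookkeeping then shows $f(x)=\ev(f\otimes x)\in(\wedge V)^m$, $D(f(x))\in((\wedge V)_*)^0$, and $\ev_1(D(f(x)))=D(f(x))(1)\in\Q$, so the lower-then-right composite sends $f\otimes x$ to $D(f(x))(1)$.

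The essential step is to unwind the two adjunctions packaged into $\ad(\pd)$. Since $D$ is $\wedge V$-linear, so is $D_*(f)=D\circ f$, and the adjoint isomorphism
\[
  \Hom_{\wedge V}(\HochschildModel_{(k)}, (\wedge V)_*)
  = \Hom_{\wedge V}(\HochschildModel_{(k)}, \Hom(\wedge V, \Q))
  \cong \Hom(\HochschildModel_{(k)}, \Q)
\]
used to define $\pd$ sends a $\wedge V$-linear map $g$ to the functional $y\mapsto g(y)(1)$, that is, to post-composition with evaluation at $1\in\wedge V$. Applying this to $g=D\circ f$ gives $\pd(f)=\ev_1\circ D\circ f$, whence $\pd(f)(x)=D(f(x))(1)$. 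Passing to the adjoint over the tensor factor $\HochschildModel_{(k)}$ then yields $\ad(\pd)(f\otimes x)=\pd(f)(x)=D(f(x))(1)$, which is exactly the value computed along the other path. This establishes the commutativity.

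I do not expect a genuine obstacle here; the statement is, as the paper remarks, a straightforward computation. The only point that requires care is the Koszul sign in the tensor--hom adjunction defining $\pd$. Because the inner evaluation is at the degree-zero unit $1\in\wedge V$ and the degree shifts carried by $f$ and by $D$ are already absorbed into $f(x)$ before $\ev_1$ is applied, no stray sign survives, and the identity $\ad(\pd)(f\otimes x)=\ev_1\circ D\circ\ev(f\otimes x)$ holds on the nose, as displayed in the diagram.
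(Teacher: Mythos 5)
Your proof is correct and is precisely the ``straightforward computation'' the paper invokes without writing out: unwinding the tensor--hom adjunction in the definition of $\pd$ to get $\ad(\pd)(f\otimes x)=D(f(x))(1)$, which matches the composite $\ev_1\circ D\circ \ev$, with the evaluation at the degree-zero unit $1\in\wedge V$ correctly disposing of any Koszul signs. Nothing further is needed.
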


\begin{prop}
  \label{prop:non_deg_pairing}
  Let \(n\) and \(k\) be integers.
  \begin{enumerate}
    \item The pairing
      \begin{equation}
        H(\ev)\colon H^{-n}(\Hom_{\wedge V}(\HochschildModel_{(k)}, \wedge V))
        \otimes H^{m+n}(\HochschildModel_{(k)})
        \to H^m(\wedge V) \cong \Q
      \end{equation}
      is non-degenerate.
    \item\label{item:eval_to_fundamental_class} For any
      \([f]\neq 0\in H^{-n}(\Hom_{\wedge V}(\HochschildModel_{(k)}, \wedge V))\),
      there is a cohomology class
      \([\alpha]\in H^{m+n}(\HochschildModel_{(k)})\)
      such that \([f(\alpha)] \in H^m(\wedge V)\) is the same as the fundamental class.
  \end{enumerate}
\end{prop}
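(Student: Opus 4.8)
The plan is to derive both assertions from the single fact that the composite $\pd$ is a quasi-isomorphism, combined with \cref{lem:pd_and_D}. First I would check that $\pd$ is a quasi-isomorphism. Its second arrow is the adjunction isomorphism $\Hom_{\wedge V}(\HochschildModel_{(k)}, \Hom_\Q(\wedge V, \Q)) \cong \Hom_\Q(\HochschildModel_{(k)}, \Q)$, an isomorphism of complexes, so all the content lies in the first arrow $D_*$. Here $D\colon \wedge V \to (\wedge V)_*$ is a quasi-isomorphism of $\wedge V$-dg-modules, and $\HochschildModel_{(k)} = \wedge V\otimes\wedge^{k}\overline{V}$ is a semifree $\wedge V$-dg-module, being a direct summand of the relative Sullivan algebra $\HochschildModel$ over $\wedge V$. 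Hence $\Hom_{\wedge V}(\HochschildModel_{(k)}, -)$ preserves quasi-isomorphisms, so $D_*$, and therefore $\pd$, is a quasi-isomorphism.

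Next, since we work over the field $\Q$, the functor $\Hom(-, \Q)$ is exact and the codomain satisfies $H((\HochschildModel_{(k)})_*) \cong (H(\HochschildModel_{(k)}))_*$. Thus $H(\pd)$ is an isomorphism
\[
  H^{-n}(\Hom_{\wedge V}(\HochschildModel_{(k)}, \wedge V)) \xrightarrow{\ \cong\ } \left(H^{m+n}(\HochschildModel_{(k)})\right)_*.
\]
By construction $H(\pd)[f]$ is the functional $[\alpha]\mapsto[\ad(\pd)(f\otimes\alpha)]$, so this isomorphism is precisely the left adjoint of the induced pairing $H(\ad(\pd))$, and its being an isomorphism onto the full dual means that $H(\ad(\pd))$ is \emph{non-degenerate}. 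I would then transport this along \cref{lem:pd_and_D}, which identifies $\ad(\pd)$ with $\ev_1\circ D\circ\ev$. Passing to homology, $\ev_1$ is an isomorphism and $H(D)\colon H^m(\wedge V)\to H^0((\wedge V)_*)\cong\Q$ is an isomorphism because $D$ is a quasi-isomorphism and $\wedge V$ is connected. Composing a pairing with isomorphisms on its target preserves non-degeneracy, so $H(\ev)$ is non-degenerate, which is (1). Since $X$ is of finite type, each $H(\HochschildModel_{(k)})$ is degreewise finite-dimensional, so the left adjoint being an isomorphism also forces non-degeneracy in the second variable.

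Finally, assertion \cref{item:eval_to_fundamental_class} is the explicit form of left non-degeneracy. Given $[f]\neq 0$, injectivity of $H(\pd)$ yields $H(\pd)[f]\neq 0$, hence a class $[\alpha]\in H^{m+n}(\HochschildModel_{(k)})$ with $[\ad(\pd)(f\otimes\alpha)]\neq 0$; by \cref{lem:pd_and_D} this means $H(\ev)([f]\otimes[\alpha]) = [f(\alpha)]\neq 0$ in $H^m(\wedge V)$. As $H^m(\wedge V)\cong\Q$ is spanned by the fundamental class, $[f(\alpha)]$ is a nonzero scalar multiple of it, and rescaling $[\alpha]$ makes $[f(\alpha)]$ equal to the fundamental class.

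The one genuinely non-formal input I expect to need is the semifreeness of $\HochschildModel_{(k)}$ as a $\wedge V$-dg-module, which is exactly what makes $D_*$ a quasi-isomorphism and hence drives the whole argument; the remaining steps are the bookkeeping of dualization over a field and the identification provided by the diagram of \cref{lem:pd_and_D}.
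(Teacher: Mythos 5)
Your proposal is correct and follows essentially the same route as the paper: the paper likewise defines \(\pd\) as a quasi-isomorphism (via semifreeness of \(\HochschildModel_{(k)}\) over \(\wedge V\), making \(D_*\) a quasi-isomorphism), uses \cref{lem:pd_and_D} to identify \(H(\ev)\) with \(H(\ad(\pd))\) up to isomorphism, and deduces both assertions from \(H(\pd)\) being an isomorphism onto the full dual. Your write-up merely makes explicit the bookkeeping (exactness of \(\Hom(-,\Q)\), finite type for non-degeneracy in the second variable, and the rescaling step for \cref{item:eval_to_fundamental_class}) that the paper leaves implicit.
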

\begin{proof}
  Since \(D\) induces an isomorphism on homology,
  \cref{lem:pd_and_D} identifies \(H(\ev)\) and \(H(\ad(\pd))\) up to isomorphism.
  Hence the proposition follows from the fact that
  \(H(\pd)\) is an isomorphism.
\end{proof}

\newcommand{\eFromHom}{\tilde{e}}
In order to prove \cref{thm:e_fundamental_class}, we
represent the contraction \(e\) as a composite of maps. \cref{prop:lambda_map} below asserts that the  linear map
\(\lambda\colon \Der(\wedge V)\to \Hom_{\wedge V}(\HochschildModel_{(1)}, \wedge V)\cong\Hom(\overline{V}, \wedge V)\) defined by
\(\lambda(\theta)(\bar{v}) = (-1)^{\deg{\theta}}\theta(v)\)
for \(v\in V\) and \(\theta\in\Der(\wedge V)\) is an isomorphism of complexes of degree \(1\).
Moreover, we define a chain map
\(\eFromHom\colon \Hom_{\wedge V}(\HochschildModel_{(1)}, \wedge V)\to\Der(\HochschildModel)\)
of degree 0 by
\(\eFromHom(f)(v)=0\) and \(\eFromHom(f)(\bar{v})=f(\bar{v})\)
for \(v\in V\) and \(f\in\Hom_{\wedge V}(\HochschildModel_{(1)}, \wedge V)\).
Then we have a commutative diagram
\begin{equation}
  \xymatrix@C20pt@R20pt{
    \Der(\wedge V) \ar[r]^-{e} \ar[d]_-{\cong}^-{\lambda} & \Der(\HochschildModel) \\
    \Hom_{\wedge V}(\HochschildModel_{(1)}, \wedge V). \ar[ru]^-{\eFromHom}
  }
\end{equation}

\begin{proof}[Proof of \cref{thm:e_fundamental_class}]
  Take an element \([\theta]\neq 0\in H^{-n-1}(\Der(\wedge V))\).
  Since \(\lambda\) is an isomorphism, we have
  \(H(\lambda)[\theta]\neq 0\in H^{-n}(\Hom_{\wedge V}(\HochschildModel_{(1)}, \wedge V))\).
  Hence, by \cref{prop:non_deg_pairing} \cref{item:eval_to_fundamental_class},
  there exists a cohomology class \([\alpha]\in H^{m+n}(\HochschildModel_{(1)})\) such that
  \([\lambda(\theta)(\alpha)]\in H^m(\wedge V)\) is the same as the fundamental class.
  Then we see that \([(\eFromHom\lambda(\theta))(\alpha)]\in H^m(\HochschildModel)\) is nothing but
  the fundamental class. Therefore, the theorem follows from the above commutative diagram.
\end{proof}

\section{Geometric counterparts of Cartan calculi}
\label{sect:GeometricDiscription}

In this section, we assume
that the underlying field is of arbitrary characteristic.
Theorem \ref{thm:e_and_L} asserts that the operations $L$ and $e$ appeared in Proposition  \ref{prop:CartanCalculusSullivanModel} coincide with the Lie representation and the contraction in Proposition \ref{prop:HCartan_Hochschild} on homology, respectively. In this section, we consider geometric constructions of the operations $L$ and $e$ on homology.
Moreover, as mentioned in Introduction, a geometric description of the isomorphism $\Phi$ of Lie algebras in Theorem \ref{thm:CartanCal} is given.

\subsection{Geometric descriptions of the operations  $L$ and $e$}\label{sect:geom_L_e}

Given $\theta \in \pi_n (\aut X)$ which is represented by $\theta : S^n \to \aut X$.
Let $\ad (\theta) : S^n \times X \to X$ be the adjoint of $\theta$ and consider the map between the free loop spaces $L(\ad (\theta)) : LS^n \times LX \to LX$ defined by
$(L(\ad (\theta)(l, \gamma))(t) = \ad (\theta)(l(t), \gamma(t))$ for $(l, \gamma)\in LS^n\times LX$ and $t \in S^1$.
Let $\ev_0 : LS^n \to S^n$ be the evaluation map at $0$. We define $L : \pi_n (\aut X) \to \End^{-n} (H^* (LX))$ by the composite
\begin{equation}\label{defn:L_geom}
\xymatrix@C30pt@R30pt{
L_\theta : H^* (LX)
\ar[r]^-{L(\ad (\theta))^*}
&
H^*(LS^n \times LX)
\ar[r]^-{\int_{[S^n]}}
&
H^*(LX),
}
\end{equation}
where $\int_{[S^n]}$ denotes the integration along the image of the fundamental class of $S^n$ by the map $\ev_0^* : H^n(S^n) \to H^n(LS^n)$.
The rotation on $S^1$ induces the action $\mu : S^1\times LX \to LX$ on the free loop space. By definition, the BV-operator $\Delta$ on $H^*(LX)$ is the composite
\[
\xymatrix@C20pt@R20pt{
\Delta: H^*(LX) \ar[r]^{\mu^*} & H^*(S^1\times LX) \ar[r]^-{\int_{S^1}} & H^{*-1}(LX),
}
\]
where $\int_{S^1}$ is the integration along the fundamental class of $S^1$.
Let $\overline{[S^n]}$ be the cohomology class in $H^{n-1}(LS^n)$ which is the image of the fundamental class of $S^n$ by the composite
\[
\xymatrix@C20pt@R20pt{
H^n (S^n)
\ar[r]^-{\ev_0^*}
&
H^n (LS^n)
\ar[r]^-{\Delta}
&
H^{n-1}(LS^n).
}
\]
Then, we define a linear map $e : \pi_n (\aut X) \to \End^{-n+1} (H^* (LX))$ by the composite
\begin{equation}\label{defn:e_geom}
\xymatrix@C30pt@R30pt{
e_\theta : H^* (LX)
\ar[r]^-{L(\ad (\theta))^*}
&
H^*(LS^n \times LX)
\ar[r]^-{\int_{\overline{[S^n]}}}
&
H^*(LX).
}
\end{equation}

We first consider properties of the operation $L$.
\begin{thm}\label{thm:LieAlgL}
  \begin{enumerate}
    \item\label{item:LieAlgL-i} The map
      $L : \pi_*(\aut X) \to \Der^{-*}(H^*(LX))$
      is a morphism of Lie algebras.
    \item\label{item:LieAlgL-ii} For each $\theta$ in  $\pi_*(\aut X)$,
      the derivation $L_\theta$  commutes with the BV operator
      $\Delta : H^*(LX) \to H^{*-1}(LX)$.
  \end{enumerate}
\end{thm}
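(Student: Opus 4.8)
The plan is to reduce both statements to the single observation that $L_\theta$ is, up to sign, the ``derivative along $\theta$'' of the post-composition action of $\aut{X}$ on $LX$. Let $a\colon \aut{X}\times LX\to LX$ denote the action $a(\phi,\gamma)=\phi\circ\gamma$; note that $L\phi=a(\phi,-)$. Write $c\colon S^n\to LS^n$ for the inclusion of constant loops, which is a section of $\ev_0$. The integration $\int_{[S^n]}$ is computed by pulling back along $c$ and integrating the resulting $S^n$-restriction, equivalently by slanting the $LS^n$-factor against $c_*[S^n]\in H_n(LS^n)$; by naturality of the slant product this gives
\[
L_\theta(x)=\int_{[S^n]}L(\ad(\theta))^*(x)=\int_{S^n}\big(L(\ad(\theta))\circ(c\times\mathrm{id})\big)^*(x).
\]
A direct check of the defining formula shows $L(\ad(\theta))\circ(c\times\mathrm{id})=a\circ(\theta\times\mathrm{id})\colon S^n\times LX\to LX$, since on a constant loop the pointwise application of $\ad(\theta)$ is exactly post-composition by $\theta(l_0)$. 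Hence, writing $\sigma:=\theta_*[S^n]\in H_n(\aut{X})$ for the Hurewicz image and $/$ for the slant product over the $\aut{X}$-factor, one obtains the working description $L_\theta(x)=a^*(x)/\sigma$, which I would use throughout.

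For \cref{item:LieAlgL-i} I would first record that $L_\theta$ is a derivation. The coaction $a^*\colon H^*(LX)\to H^*(\aut{X})\otimes H^*(LX)$ is a ring homomorphism, and $\sigma$ is \emph{primitive}, because the Hurewicz image of $\pi_*$ of a connected grouplike monoid lies in the primitives of the Hopf algebra $H_*(\aut{X})$. Pairing a cup product against a primitive obeys the Leibniz rule, and the counit part of the coaction is the identity, since $a(\mathrm{id},-)=\mathrm{id}_{LX}$; combining these yields $L_\theta(xy)=L_\theta(x)\,y\pm x\,L_\theta(y)$, so $L_\theta\in\Der^{-n}(H^*(LX))$. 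For the bracket, associativity of the action $a\circ(\mu\times\mathrm{id})=a\circ(\mathrm{id}\times a)$ dualizes to coassociativity $(\mu^*\otimes\mathrm{id})a^*=(\mathrm{id}\otimes a^*)a^*$, from which a bookkeeping of slant products yields the module-type identity $L_\sigma L_\tau=\pm L_{\tau\cdot\sigma}$, where $\cdot$ is the Pontryagin product. Since the Hurewicz homomorphism carries the Samelson product to the graded commutator for the Pontryagin product, $\langle\theta,\rho\rangle_*=\sigma\tau-(-1)^{|\sigma||\tau|}\tau\sigma$, substituting this gives $L_{\langle\theta,\rho\rangle}=[L_\theta,L_\rho]$.

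For \cref{item:LieAlgL-ii} I would use that the two actions on $LX$ commute: for every $s\in S^1$ and $\phi\in\aut{X}$ the rotation $\mu_s$ and the post-composition map $L\phi$ satisfy $\mu_s\circ L\phi=L\phi\circ\mu_s$, because applying a self-map of $X$ pointwise commutes with shifting the loop parameter. Recalling $\Delta=\int_{S^1}\mu^*$ and $L_\theta=\int_{S^n}\big(a\circ(\theta\times\mathrm{id})\big)^*$, each member of the $S^1$-family $\{\mu_s\}$ commutes with each member of the $S^n$-family $\{L\phi\}$, so on cohomology $\mu^*$ and $a^*$ intertwine; as the two fiber integrations are taken over the independent factors $S^1$ and $S^n$ they commute, giving $[\Delta,L_\theta]=0$.

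The main obstacle is the sign bookkeeping. Verifying that the slant-product manipulation in \cref{item:LieAlgL-i} produces a genuine Lie algebra homomorphism rather than an anti-homomorphism, that is, pinning down the Koszul signs in $L_\sigma L_\tau=\pm L_{\tau\cdot\sigma}$ against the chosen conventions for the Samelson and slant products, is the delicate point; likewise one must check that commuting the two fiber integrations in \cref{item:LieAlgL-ii} carries no sign. The geometric inputs, namely the reduction to constant loops, primitivity of Hurewicz images, and the passage from Samelson products to Pontryagin commutators, are standard, and I would cite them rather than reprove them.
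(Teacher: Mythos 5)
Your proposal is correct, and for part \cref{item:LieAlgL-ii} it coincides with the paper's argument: the paper simply observes that $ad(u_\theta)\colon S^n\times LX\to LX$ is $S^1$-equivariant (for the trivial action on $S^n$), which is exactly your pointwise identity $\mu_s\circ L\phi=L\phi\circ\mu_s$, and concludes that $L_\theta$ commutes with $\Delta$; your reduction to constant loops, $L(\ad(\theta))\circ(c\times\mathrm{id})=a\circ(\theta\times\mathrm{id})$, is also made explicitly in the paper (as $ad(u_\theta)=L(\ad(\theta))\circ(s\times 1)$). For part \cref{item:LieAlgL-i}, however, you take a genuinely different route. The paper follows F\'elix--Lupton--Smith \cite{FLS}: additivity is proved with the pinch map $S^n\to S^n\vee S^n$, and the Samelson bracket is handled by writing out the commutator map $\gamma(x,y)=\theta(x)\circ\theta'(y)\circ\overline{\theta}(x)\circ\overline{\theta'}(y)$, factoring its adjoint through diagonals and a transposition, and computing on cohomology as in \cite[p.~394]{FLS}. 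You instead factor $L_\theta$ through the Hurewicz image $\sigma=\theta_*[S^n]$ in the Hopf algebra $H_*(\aut X)$, getting additivity and homotopy invariance for free, the derivation property from primitivity of $\sigma$, the module identity $L_\sigma L_\tau=\pm L_{\tau\cdot\sigma}$ from coassociativity of the action, and the bracket by citing the classical fact that the Hurewicz map carries Samelson products to Pontryagin commutators. This is more economical, but note two things. First, the classical citation is proved by essentially the commutator computation the paper performs, so the content is relocated rather than avoided; moreover the paper's space-level argument is chosen because it runs verbatim through the Borel construction, proving the equivariant statement \cref{thm:LieAlg} for $\overline{L}$ simultaneously, whereas your coaction formalism would need reworking for $H^*_{S^1}(LX)$. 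Second, your identification $a^*\colon H^*(LX)\to H^*(\aut X)\otimes H^*(LX)$ uses a K\"unneth isomorphism; this is legitimate here because $H^*(LX)$ has finite type over a field when $X$ is simply connected of finite type (or one can work with slant products against the fixed class $\sigma$ and avoid K\"unneth altogether), but it deserves a sentence. The deferred sign check is real but resolvable: the anti-multiplicativity $L_{\sigma\tau}=\pm L_\tau L_\sigma$ a priori yields the bracket identity only up to a degree-dependent sign, and pinning the conventions so that one obtains the unsigned Lie homomorphism of the theorem is exactly what the explicit computation in \cite{FLS} (and hence the paper's proof) accomplishes.
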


We postpone the proof to \cref{app:bar{L}}; see the argument after \cref{thm:LieAlg}.
The operation $L$ in (\ref{defn:L_geom}) is extended to that on the equivariant cohomology $H^*_{S^1}(LX)$.
\cref{thm:LieAlgL} is proved by considering a result on the equivariant version of the operator $L$.

\begin{rem}
One might expect that the same construction as that in (\ref{defn:L_geom}) and (\ref{defn:e_geom}) is applicable to {\it other} element in $H^*(LS^n)$, or more generally in $H^*(LS)$ for a simply-connected space $S$. In fact, it is possible. Moreover, we see that such operations incorporate $e$ and $L$ together with interesting properties, for instance, the Cartan formula such as that for Steenrod operations.
The topic will be discussed in more detail in \cite{NW2022}.
\end{rem}


In the rest of this section, we assume that the underlying field and the coefficients of cohomology rings of spaces are rational unless otherwise specified.

We relate Sullivan's isomorphism $\Phi$ mentioned in Section \ref{sect:preliminaries} with the Lie derivative  $L_{( \ )}$.

\begin{prop}\label{prop:LieAlg-New} Let $X$ be a simply-connected space of finite type and $\wedge V$ the minimal Sullivan model for $X$. Then there exists a commutative diagram
  \[
    \xymatrix@C20pt@R20pt{
      \pi_*(\aut X)\otimes \Q \ar[r]^-{L}  \ar[d]^{\cong}_{\Phi}& \Der^{-*}(H^*(LX; \Q)) \ar[d]^{\cong}\\
      H_*(\Der(\wedge V)) \ar[r]_-{L} &
      \Der^{-*}(H^*(\HochschildModel)).
    }
  \]
\end{prop}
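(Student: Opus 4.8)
The goal is to show that Sullivan's isomorphism $\Phi$ intertwines the geometric Lie derivative $L$ on $H^*(LX)$ with the algebraic Lie derivative $L$ on $\Der(\HochschildModel)$, where the right vertical isomorphism is the identification of $H^*(LX)$ with $H^*(\HochschildModel)$ coming from the fact that $\HochschildModel$ is a Sullivan model for $LX$ (\cite{V-S}). The plan is to translate the geometric construction \eqref{defn:L_geom} into the Sullivan-model world and then to verify that the resulting operation on $\HochschildModel$ agrees with the algebraic $L_{\Phi(\theta)}$ of \cref{defn:derivationSullivan}. First I would take $\theta \in \pi_n(\aut X)$ and pass to its adjoint $\ad(\theta)\colon S^n \times X \to X$; by the definition of $\Phi$ in \cref{sect:preliminaries}, a Sullivan representative of $\ad(\theta)$ has the form $\wedge V \to H^*(S^n)\otimes \wedge V$, $w \mapsto 1\otimes w + \iota\otimes \Phi(\theta)(w)$, where $\iota$ generates $H^n(S^n)$ and $\Phi(\theta)$ is the degree $-n$ derivation representing the Andr\'e--Quillen class.

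\medskip

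Next I would build a Sullivan model for the map $L(\ad(\theta))\colon LS^n \times LX \to LX$ appearing in \eqref{defn:L_geom}. Applying the free-loop-space functor on models replaces $\wedge V$ by $\HochschildModel = \wedge V \otimes \wedge \overline{V}$ and replaces the model of $S^n$ by the corresponding Hochschild-type model $\HochschildModel_{S^n}$ of $LS^n$; the key computation is that the induced CDGA map $\HochschildModel \to H^*(LS^n)\otimes \HochschildModel$ sends a generator $w$ (resp.\ $\bar{w}$) to $1\otimes w$ (resp.\ $1\otimes \bar{w}$) plus $\iota$-terms governed by $\Phi(\theta)$ and its suspension $s\Phi(\theta)$. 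Then the integration $\int_{[S^n]}$ in \eqref{defn:L_geom} corresponds on models to extracting the coefficient of $\iota$, i.e.\ to applying $\ev_0^*$ followed by the projection dual to the fundamental class. Carrying this out on generators, I expect the coefficient of $\iota$ to reproduce exactly the formulas $L_{\Phi(\theta)}v = \Phi(\theta)v$ and $L_{\Phi(\theta)}\bar{v} = (-1)^{|\Phi(\theta)|}s\Phi(\theta)v$ from \cref{defn:derivationSullivan}, which would establish commutativity on the level of chain maps and hence on homology.

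\medskip

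The main obstacle, and where I would spend the most care, is the bookkeeping of the free-loop-space functor at the level of models: namely checking that the Hochschild model of the adjoint map $\ad(\theta)$ really does induce, via $\ev_0^*$ and the suspension derivation $s$, precisely the derivation $s\Phi(\theta)$ on the $\overline{V}$-generators with the correct Koszul sign $(-1)^{|\Phi(\theta)|}$. This requires knowing how a Sullivan representative of a map interacts with the functorial assignment $Y \mapsto \HochschildModel_Y$ and with the commutation $[L_\theta, s]=0$ recorded in the proof of \cref{thm:e_and_L}. A clean way to handle the $\iota$-coefficient extraction is to observe that $\int_{[S^n]}$ dualizes the map $\ev_0^*\colon H^n(S^n)\to H^n(LS^n)$, so that after passing to models the whole geometric composite becomes a derivation on $\HochschildModel$ of degree $-n$; since both $L_\theta$ (geometric, transported) and $L_{\Phi(\theta)}$ (algebraic) are derivations, it suffices to compare them on the generators $V \oplus \overline{V}$, which reduces the verification to the finite generator-level computation sketched above. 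I would then invoke \cref{thm:e_and_L} to identify $H^*(\HochschildModel)$-level operations, closing the diagram.
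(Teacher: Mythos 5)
Your proposal is correct, but it takes a genuinely different route from the paper's own proof of \cref{prop:LieAlg-New}. What you sketch --- model $L(\ad(\theta))$ via \cref{lem:ModelLf}, write a Sullivan representative of $\ad(\theta)$ as $\varphi(v)\equiv 1\otimes v+u\otimes\theta'(v)$ modulo $({\mathcal M}_{S^n})^{>n}\otimes\wedge V$, and extract the coefficient of $u$ --- is exactly the method the paper uses for the \emph{contraction} in \cref{prop:Lie_e}, transplanted to $L$, and the computation does close: since the integration class $\ev_0^*[S^n]$ is represented by $u$ in $\HochschildModel_{S^n}$, the relevant model is $(\int_u\otimes 1)\circ\HochschildModel\varphi$, and the $u$-coefficient of $\HochschildModel\varphi(\bar v)\equiv 1\otimes\bar v+\bar u\otimes\theta'(v)+(-1)^nu\otimes s\theta'(v)$ is $(-1)^ns\theta'(v)=L_{\theta'}(\bar v)$ because $(-1)^{\deg{\theta'}}=(-1)^{-n}=(-1)^n$; so, unlike the $e$ case, no global sign survives, consistent with the statement. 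The paper's actual proof (in Appendix \ref{app:bar{L}}) avoids $LS^n$ and $\HochschildModel_{S^n}$ altogether: it first replaces $L(\ad(\theta))$ by $ad(u_\theta)=L(\ad(\theta))\circ(s\times 1)$ using the constant-loop section $s\colon S^n\to LS^n$, so that $L_\theta$ is induced by a map $S^n\times LX\to LX$, and then pins down its Sullivan representative by uniqueness of adjoints (\cref{lem:adjoint_alg}): a CDGA map ${\mathcal M}_{LX}\to H^*(S^n;\Q)\otimes{\mathcal M}_{LX}$ is the model precisely when composing with the standard Vigu\'e--Sullivan model of the evaluation $\ev\colon LX\times S^1\to X$ recovers the model of $\ad(\theta)\circ(1\times\ev)$, which forces it to equal $1\otimes 1+\iota\,{}_aL_\theta$ on generators. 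The adjunction route is what buys the equivariant theory: since $ad(u_\theta)$ is $S^1$-equivariant, the same lemma feeds directly into \cref{thm:LieAlg-New} and the Eilenberg--Moore arguments for $\overline{L}$, which your $LS^n\times LX$ model does not yield as directly.

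Two small repairs to your writeup. First, for $n$ even you cannot literally take $H^*(S^n;\Q)\otimes\wedge V$ or $H^*(LS^n;\Q)\otimes\HochschildModel$ as the codomain when invoking \cref{lem:ModelLf}, since these are not Sullivan algebras; you must work with ${\mathcal M}_{S^n}\otimes\wedge V$ and $\HochschildModel_{S^n}\otimes\HochschildModel$, define $\int_u\colon\HochschildModel_{S^n}\to\Q$ on the monomial basis, and check both that $\int_u$ is a chain map (true: $u$ never occurs linearly in the image of $d$) and that it annihilates the higher-filtration terms even after $s$ is applied (true: $s$ of $({\mathcal M}_{S^n})^{>n}\otimes\wedge V$ never produces the bare monomial $u$), exactly as the paper does for $\int_{\bar u}$ in \cref{prop:Lie_e}. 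Second, your final appeal to \cref{thm:e_and_L} is unnecessary: the right vertical isomorphism in the proposition is just the Vigu\'e--Sullivan identification $H^*(LX;\Q)\cong H^*(\HochschildModel)$, and your generator-level verification already takes place in $\HochschildModel$, so the diagram closes without passing through the Hochschild-complex calculus.
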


We prove Proposition \ref{prop:LieAlg-New} in Appendix \ref{app:bar{L}} together with its equivariant version; see \cref{thm:LieAlg-New}.
Next we give a relationship between the operation $e$ and the isomorphism $\Phi$.

\begin{prop}\label{prop:Lie_e} Under the same assumption as in Proposition
\ref{prop:LieAlg-New}, there exists
a commutative diagram
\[
\xymatrix@C30pt@R20pt{
\pi_n (\aut X) \otimes \Q
\ar[r]^-{(-1)^ne}
\ar[d]_-{\Phi}^{\cong}
&
\End^{-n+1} (H^* (LX ; \Q))
\\
H_n ( \Der (\wedge V))
\ar[r]^-{e}
&
\Der^{-n+1} (H^* (\HochschildModel)).
\ar[u]_-{\text{a monomorphism}}
}
\]
\end{prop}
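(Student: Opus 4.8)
The plan is to establish the commutativity of the square in \cref{prop:Lie_e} by comparing the geometric contraction $e_\theta$ on $H^*(LX;\Q)$ with the algebraic contraction $e$ on $\Der(\wedge V)$ through the chain of identifications already available in the excerpt. The right vertical monomorphism is the map $i$ induced by the isomorphism $H(\Theta)\colon HH_*(\wedge V)\cong H^*(\HochschildModel)$ of \cref{thm:e_and_L}, combined with the Burghelea--Vigu\'e-Poirrier isomorphism $H^*(LX;\Q)\cong H^*(\HochschildModel)$ coming from the fact that $\HochschildModel$ is a Sullivan model for $LX$. First I would fix a minimal Sullivan model $(\wedge V, d)$ for $X$ and recall that $L(\ad\theta)\colon LS^n\times LX\to LX$ admits a Sullivan representative built from a Sullivan representative for $\ad\theta\colon S^n\times X\to X$. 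By the definition of $\Phi$ in \cref{sect:preliminaries}, this representative has the form $1\otimes 1 + \iota\otimes\Phi(\theta)$ on the level of $S^n\times X$, where $\iota$ generates $H^n(S^n;\Q)$ and $\Phi(\theta)\in\Der(\wedge V)$.

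The key step is to identify the effect of the geometric operation $\int_{\overline{[S^n]}}$ appearing in \eqref{defn:e_geom} with the algebraic contraction $e_{\Phi(\theta)}$ at the model level. Here I would pass to the Sullivan model $\HochschildModel$ of $LX$ and the analogous model $\HochschildModel_{S^n}$ of $LS^n$, and compute the model for $L(\ad\theta)^*$ followed by integration against the class $\overline{[S^n]}\in H^{n-1}(LS^n)$, which is $\Delta(\ev_0^*[S^n])$. The presence of $\Delta$ (the BV operator, realized on $\HochschildModel_{S^n}$ via the derivation $s$) is precisely what converts the bare substitution by $\Phi(\theta)$ into the suspended contraction $e_{\Phi(\theta)}$ of \cref{defn:derivationSullivan}: integrating against $\Delta(\ev_0^*[S^n])$ picks out exactly the $\overline{V}$-linear part that feeds $\bar v\mapsto(-1)^{\deg\theta}\theta v$. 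I expect the sign $(-1)^n$ on the top arrow to emerge here, from commuting the suspension operator $s$ past the degree-$n$ fundamental class, matching the Koszul signs in the definition of $e_\theta$ and the factor $(-1)^*e$ already recorded in \cref{thm:CartanCal}.

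The main obstacle will be the bookkeeping needed to pin down this sign and to verify that the model computation genuinely lands in the image of the monomorphism $i$, i.e.\ that the geometric $e_\theta$ is realized by a \emph{derivation} of $\HochschildModel$ rather than merely an endomorphism of its homology. To handle the former I would carefully track the suspension $s$ and the fundamental class through the shuffle-type formula for $\Theta$ and $\Theta'$ used in the proof of \cref{thm:e_and_L}, reusing the sign computation carried out there for $\Theta\circ e_\theta\circ\Theta'$. For the latter, the derivation property transfers from the algebraic side because $e_{\Phi(\theta)}$ is a derivation of $\HochschildModel$ by \cref{prop:CartanCalculusSullivanModel}, and $\Theta$ intertwines it with the Hochschild-level contraction; the compatibility with $\int_{\overline{[S^n]}}$ then follows from naturality of the Burghelea--Vigu\'e-Poirrier and Brown--Szczarba models for the function spaces involved, exactly as in the geometric description of $\Phi$ via $\Gamma_1$ promised in \cref{thm:Model2_aut} and \cref{app:ModelAd}. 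Finally, I would invoke \cref{thm:e_and_L} to identify the geometric contraction with the algebraic one $e\colon H_n(\Der(\wedge V))\to\Der^{-n+1}(H^*(\HochschildModel))$, which closes the square up to the explicit sign $(-1)^n$.
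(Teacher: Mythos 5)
Your plan is correct and takes essentially the same route as the paper's proof: the paper likewise builds the Sullivan representative of $L(\ad(\theta))$ from one for $\ad(\theta)$ via \cref{lem:ModelLf} (sending $v\mapsto\varphi(v)$, $\bar v\mapsto s\varphi(v)$), observes that $\overline{[S^n]}$ is represented by $\bar u$, writes $\varphi(v)\equiv 1\otimes v+u\otimes\Phi(\theta)(v)$, and checks that $(\int_{\bar u}\otimes 1)\circ\HochschildModel\varphi$ equals $(-1)^n e_{\Phi(\theta)}$ on $V\oplus\overline V$. Two small corrections: the sign $(-1)^n$ comes from the Koszul factor $(-1)^{\deg\theta}$ in \cref{defn:derivationSullivan} (since $\deg\Phi(\theta)=-n$), not from commuting $s$ past the fundamental class, and your closing detour through \cref{thm:e_and_L} and the shuffle maps $\Theta,\Theta'$ is unnecessary because the bottom arrow of the square is already the Sullivan-model contraction of \cref{prop:CartanCalculusSullivanModel}, so the model computation alone closes the diagram.
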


\begin{proof}
We first consider a rational model for $e_\theta$ described in \eqref{defn:e_geom}.
Let ${\mathcal M}_{S^n}$ be the Sullivan model for $S^n$ which is of the form
\[
{\mathcal M}_{S^n} = \left\{
\begin{array}{ll}
(\wedge (u), 0) & ( n : \text{odd}),\\
(\wedge (u , u'), du' = u^2) & (n : \text{even}),
\end{array}
\right.
\]
where $|u|=n$, $|u'|=2n-1$.
Let  $\HochschildModel_{S^n}$ be the Sullivan model for $LS^n$ induced by ${\mathcal M}_{S^n}$ and  $\varphi : \wedge V \to {\mathcal M}_{S^n}\otimes \wedge V$ a Sullivan representative for $\ad (\theta)$; see \cref{sect:preliminaries}.
It follows from Lemma \ref{lem:ModelLf} that a Sullivan representative $\HochschildModel \varphi : \HochschildModel \to  \HochschildModel_{S^n}\otimes \HochschildModel$ for $L(\ad (\theta))$ is given by
\begin{equation}\label{model:L(ad(f))}
\HochschildModel \varphi  (v) = \varphi (v)
\ \ \text{and} \ \
\HochschildModel \varphi  (\bar{v}) = s \varphi (v)
\end{equation}
for $v \in V$.
We define a morphism $\int_{\bar{u}} : \HochschildModel_{S^n} \to \Q$ of chain complexes of degree $-n+1$ by $\int_{\bar{u}}(\bar{u}) = 1$ and $\int_{\bar{u}}(w) = 0$ for bases $w$ with $w\neq \bar{u}$.
Since the cohomology class $\overline{[S^n]}$ is represented by $\bar{u}$, the definition of $e_\theta$ implies that the composite $(\int_{\bar{u}}\otimes 1) \circ \HochschildModel \varphi $ is a rational model for  $e_\theta$.
Now, we may write
\[
\varphi (v) \equiv 1\otimes v + u \otimes \theta' (v)
\]
for $v\in V$ modulo $({\mathcal M}_{S^n})^{>n} \otimes \wedge V$.
By definition, we see that $\Phi (\theta)(v) = \theta' (v)$.
Therefore, it follows from \eqref{model:L(ad(f))} that
\begin{align}
\HochschildModel \varphi (v)
&\equiv  1\otimes v + u \otimes \theta' (v), \\
\HochschildModel \varphi (\bar{v})
&\equiv  s (1\otimes v + u \otimes \theta' (v))
=1 \otimes \bar{v} + \bar{u} \otimes \theta' (v) + (-1)^n u\otimes s\theta' (v)
\end{align}
modulo $(\HochschildModel_{S^n})^{>n} \otimes \HochschildModel$.
We have $(\int_{\bar{u}}\otimes 1) \circ \HochschildModel \varphi (v)=0$ and $(\int_{\bar{u}}\otimes 1) \circ \HochschildModel \varphi (\bar{v}) = (-1)^n e_{\theta}(\bar{v})$. This completes the proof.  
\end{proof}

Finally we are ready to prove \cref{thm:CartanCal}.
\begin{proof}[Proof of \cref{thm:CartanCal}]
We observe that all results in this manuscript remain  true even if the underlying field ${\mathbb Q}$ is replaced with ${\mathbb R}$. In fact, after taking the tensor product $\text{--}\otimes_\Q{\mathbb R}$, we have these results. Moreover, we recall the fact that there exists a sequence of quasi-isomorphisms connecting $\Omega^*(M)$ with $A_{PL}(M)\otimes_\Q{\mathbb R}$, where $A_{PL}(M)$ denotes the CDGA of polynomial differential forms on a  manifold $M$; see \cite[Section 11 (d)]{FHT1} for the details. Therefore, a Sullivan minimal model $\wedge W \stackrel{\simeq}{\to} A_{PL}(M)$ gives rise to a minimal model $m : \wedge V \stackrel{\simeq}{\to} Q\Omega^*(M)$ in which $V = W\otimes_\Q{\mathbb R}$. Here $Q\Omega^*(M)$ denotes a cofibrant replacement of $\Omega^*(M)$ in the category $\mathcal{A}$ of CDGAs endowed with the model category structure described in \cite{BG}.

Let $M$ be a simply-connected manifold.
To prove Theorem \ref{thm:CartanCal},
we first construct an isomorphism between \(H_*(\Der(Q\Omega^*(M)))\) and \(H_*(\Der(\wedge V))\),
by applying the proof of  \cite[Theorem 2.8]{B-L05}.
With the notation above, the map $m$ has a factorization
\[
  \xymatrix@C20pt@R10pt{
  \wedge V \ar[rr]^-m_-\simeq \ar[rd]_-i^-\simeq&& Q\Omega^*(M) \\
& A \ar[ru]_-p&
}
\]
in $\mathcal{A}$, where $i$ is a trivial cofibration and $p$ is a fibration, namely an epimorphism. Observe that $p$ is also a quasi-isomorphism.
Therefore, we have a right splitting $g$ of $p$ with $p\circ g = id_{Q\Omega^*(M)}$ and a map $h_1:  \Der(A)\to \Der(Q\Omega^*(M))$ defined by $h_1(\theta) = p\circ \theta \circ g$. Moreover, since each object in $\mathcal{A}$ is fibrant, it follows that $i$ admits a left splitting $r : A \to \wedge V$ with  $r \circ i = id_{\wedge V}$.
Thus, a chain map $h_2 : \Der(A)\to \Der(\wedge V)$ is defined by $h_2(\theta)=r\circ \theta \circ i$.
As a consequence, we have a diagram consisting of commutative squares
\[
  \xymatrix@C20pt@R18pt{
   H_*(\Der(Q\Omega^*(M))) \ar[r]^-{L}_-{e}  & \End^{-*}(HH_*(\Omega^*(M))) \\
    H_*(\Der(A)) \ar[r]^-{L}_-{e}  \ar[d]^{\cong}_{(h_2)_*} \ar[u]_{\cong}^{(h_1)_*}& \End^{-*}(HH_*(A)) \ar[d]^{HH(r)\circ ( \ ) \circ HH(i)}_{\cong}
    \ar[u]_{HH(p)\circ ( \ ) \circ HH(g)}^{\cong}\\
    H_*(\Der(\wedge V)) \ar[r]^-{L}_-{e} &
    \End^{-*}(HH_*(\wedge V)).
  }
\]
We observe that left maps $(h_1)_*$ and $(h_2)_*$ are isomorphisms; see the proof of  \cite[Theorem 2.8 (1)]{B-L05}.

Finally Theorem \ref{thm:e_and_L}, Propositions  \ref{prop:LieAlg-New} and \ref{prop:Lie_e} enable us to obtain the commutative diagram in Theorem \ref{thm:CartanCal}.
\end{proof}

\subsection
{The map $\Gamma_1$ due to F\'elix and Thomas}
\label{sect:Gamma_1}

Throughout this section, we assume that $M$ is a simply-connected closed manifold of dimension $m$. Let $\SpaceModel = (\wedge V ,d)$ be the minimal Sullivan model for $M$ and $C_* (\SpaceModel)$ the Hochschild chain complex of $\SpaceModel$.
Recall the direct sum decomposition $\HochschildModel = \wedge V\otimes\wedge\overline{V} = \oplus_k \HochschildModel_{(k)}$ of complexes from \cref{sect:preliminaries}.
Thus we have decompositions $H^* (\HochschildModel)=\oplus_k H^*(\HochschildModel_{(k)})$ and  $H_*(LM; \Q) \cong \oplus_k H^{-*}( \Hom (\HochschildModel_{(k)} , \Q))$ which are called the Hodge decompositions.
We put
\[
  H_*^{(k)}(LM) := H^{-*}( \Hom (\HochschildModel_{(k)} , \Q)).
\]
Since the Hochschild cohomology $HH^*(\SpaceModel,A)$ with coefficients in a $\SpaceModel$-module $A$ is isomorphic to the homology of the complex $\Hom_{\SpaceModel}(\HochschildModel, A)$, we also have the direct sum decomposition
$HH^*(\SpaceModel,A) \cong \oplus_k  HH^*_{(k)}(\SpaceModel,A)$,
where
\[
  HH^*_{(k)}(\SpaceModel,A) = H^*(\Hom_{\SpaceModel}(\HochschildModel_{(k)}, A)).
\]
By a direct computation, we have
\begin{prop}\label{prop:lambda_map}
The map $\lambda : \Der (\SpaceModel, A) \to \Hom_{\SpaceModel} (\HochschildModel_{(1)}, A) \cong \Hom (\overline{V},A)$ of degree $1$ defined by
$\lambda (\theta)(\bar{v})=(-1)^{|\theta|}\theta (v)$
for $\theta \in \Der (\SpaceModel, A)$ and $\bar{v}\in \overline{V}$ is an isomorphism of complexes of degree $1$.
\end{prop}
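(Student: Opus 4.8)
The plan is to verify directly that the map
\(\lambda\colon \Der(\SpaceModel, A)\to \Hom_{\SpaceModel}(\HochschildModel_{(1)}, A)\) defined by
\(\lambda(\theta)(\bar v)=(-1)^{\deg{\theta}}\theta(v)\) is a well-defined isomorphism of complexes of degree \(1\). First I would unpack the two sides. Recall \(\HochschildModel_{(1)} = \wedge V\otimes \overline{V}\), so a \(\SpaceModel\)-linear map out of it is determined by its restriction to the free generators, i.e.\ by a linear map \(\overline{V}\to A\); this gives the stated identification \(\Hom_{\SpaceModel}(\HochschildModel_{(1)},A)\cong \Hom(\overline{V},A)\). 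On the other side, a derivation \(\theta\in\Der(\SpaceModel,A)\) (relative to the module structure on \(A\) via \(\varphi\)) is likewise determined by its values \(\theta(v)\in A\) for \(v\in V\), since \(\SpaceModel=\wedge V\) is free as a graded-commutative algebra and a derivation is determined by its action on algebra generators. Thus as graded vector spaces both \(\Der(\SpaceModel,A)\) and \(\Hom(\overline{V},A)\) are canonically identified with \(\Hom(V,A)\) (using \(\overline{V}\cong V\) via \(v\mapsto \bar v\)), and \(\lambda\) is, up to the sign \((-1)^{\deg{\theta}}\), this identification. The degree shift of \(+1\) comes from \(\deg{\bar v}=\deg{v}-1\): a derivation of degree \(p\) sends \(v\) to an element of degree \(\deg{v}+p\), while the corresponding \(\SpaceModel\)-linear map must send \(\bar v\) (degree \(\deg{v}-1\)) to that same element, hence has degree \(p+1\).

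Next I would check that \(\lambda\) is a bijection. Bijectivity on the underlying graded vector spaces is immediate from the two parametrizations above: given any \(f\in\Hom(\overline{V},A)\), define \(\theta\) by \(\theta(v)=(-1)^{\deg{\theta}}f(\bar v)\) and extend as a derivation; this is the unique preimage. Since both the freeness of \(\wedge V\) as an algebra and the freeness of \(\HochschildModel_{(1)}\) as a \(\wedge V\)-module are used only to guarantee that maps are determined by and can be freely prescribed on generators, the inverse is well-defined and linear, so \(\lambda\) is an isomorphism of graded vector spaces of degree \(1\).

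The substantive point, and the step I expect to require the most care, is showing that \(\lambda\) is a \emph{chain map}, i.e.\ that it intertwines the differentials on the two complexes. The differential on \(\Der(\SpaceModel,A)\) is \([D,-]\) where \(D\) combines \(d\) on \(A\) and \(d\) on \(\wedge V\) through \(\varphi\); the differential on \(\Hom_{\SpaceModel}(\HochschildModel_{(1)},A)\) is the one induced by the internal differential \(d\) of \(\HochschildModel=\wedge V\otimes\wedge\overline{V}\) (the unique extension satisfying \([d,s]=0\)) together with \(d\) on \(A\). The plan is to evaluate both \(\lambda([D,\theta])\) and the differential of \(\lambda(\theta)\) on a generator \(\bar v\) and compare. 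The key computation is to determine \(d(\bar v)\) in \(\HochschildModel_{(1)}\): since \(s\) is the degree \(-1\) derivation with \(sv=\bar v\), \(s\bar v=0\), and \([d,s]=0\), one gets \(d\bar v = -s(dv)\) up to sign, and \(s(dv)\) expands via the Leibniz rule into the \(\wedge V\otimes\overline{V}\) component. Tracking the Koszul signs through this expansion and matching them against the sign \((-1)^{\deg\theta}\) in the definition of \(\lambda\) and the signs in \([D,\theta]\) is the delicate part; the correct normalization of the sign in \(\lambda\) is precisely what is needed to make the two sides agree. Once the sign bookkeeping is settled on generators, \(\SpaceModel\)-linearity and the derivation property extend the identity to all of \(\HochschildModel_{(1)}\), completing the proof that \(\lambda\) is an isomorphism of complexes of degree \(1\).
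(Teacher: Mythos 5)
Your proposal is correct and takes essentially the same approach as the paper: the paper offers no written argument beyond the phrase ``by a direct computation,'' and your plan is exactly that computation, using freeness of \(\wedge V\) as an algebra and of \(\HochschildModel_{(1)}\) as a \(\wedge V\)-module to get bijectivity on generators, and the relations \([d,s]=0\), \(d\bar v=-s(dv)\) with Koszul-sign bookkeeping to verify the chain-map property. The only cosmetic point is the circular-looking phrasing of the inverse (\(\theta(v)=(-1)^{\deg\theta}f(\bar v)\)); since \(\deg\theta=\deg f-1\) is determined by \(f\), this is harmless.
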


\begin{cor}\label{cor:Der_quasi-iso}
  Let \(\eta : \wedge V \to A\) be a morphism of CDGAs,
  where \(\wedge V\) is a Sullivan algebra.
  \begin{enumerate}
    \item \label{item:Der_quasi-iso_codomain}
      For any quasi-isomorphism \(\varphi : A \to B\) of CDGAs,
      the map \(\varphi_* : H_*(\Der(\wedge V, A)) \to H_*(\Der(\wedge V, B))\)
      induced by \(\varphi\) is an isomorphism,
      where \(B\) in the codomain is regarded as a \(\wedge V\)-module
      via the composite \(\varphi\circ\eta\).
    \item \label{item:Der_quasi-iso_domain}
      For any quasi-isomorphism $\psi : \wedge W \to \wedge V$ of Sullivan algebras,
      the map $\psi^* : H_*(\Der(\wedge V, A))\to H_*(\Der(\wedge W, A))$
      induced by $\psi$ is an isomorphism,
      where $A$ in the codomain is regarded as a $\wedge W$-module
      via the composite $\eta\circ\psi$.
  \end{enumerate}
\end{cor}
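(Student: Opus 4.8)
Since $\lambda\colon \Der(\wedge V, A)\xrightarrow{\cong}\Hom_{\wedge V}(\HochschildModel_{(1)}, A)\cong\Hom(\overline{V}, A)$ is an isomorphism of complexes (of degree $1$), and $\overline{V}$ is a free $\wedge V$-module (a shifted copy of $V$) with $H^*(\wedge V)$-action, the question of whether a map of CDGAs induces an isomorphism on $H_*(\Der(\wedge V, -))$ reduces to the behavior of $\Hom(\overline{V}, -)$ on the target module. The guiding principle is that $\overline{V}$, being a graded vector space concentrated in appropriate degrees, makes $\Hom(\overline{V}, -)$ a product of shifted copies of the coefficient module, so the induced map on homology is controlled by the map on the coefficient module's cohomology.

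\textbf{Part (1).} First I would transport the problem across $\lambda$: the naturality of $\lambda$ in the coefficient argument gives a commutative square relating $\varphi_*\colon H_*(\Der(\wedge V, A))\to H_*(\Der(\wedge V, B))$ to the map $H_*(\Hom_{\wedge V}(\HochschildModel_{(1)}, A))\to H_*(\Hom_{\wedge V}(\HochschildModel_{(1)}, B))$ induced by $\varphi$. Since $\HochschildModel_{(1)}=\wedge V\otimes\overline{V}$ is $\wedge V$-free (hence a semifree, and in particular cofibrant, $\wedge V$-module), the functor $\Hom_{\wedge V}(\HochschildModel_{(1)}, -)$ preserves quasi-isomorphisms of $\wedge V$-modules. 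As $\varphi\colon A\to B$ is a quasi-isomorphism of CDGAs and in particular of $\wedge V$-modules, the induced map on $\Hom_{\wedge V}(\HochschildModel_{(1)}, -)$ is a quasi-isomorphism, whence an isomorphism on homology. Transporting back via $\lambda$ yields that $\varphi_*$ is an isomorphism.

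\textbf{Part (2).} Here the variance is the opposite: we vary the Sullivan algebra in the first slot. I would again use $\lambda$, but now must compare $\Hom_{\wedge V}(\HochschildModel_{(1)}, A)$ with $\Hom_{\wedge W}(\HochschildModel_{(1)}^W, A)$, where the superscripts indicate the relevant reduced Hochschild pieces for each algebra. The cleanest route is to invoke the standard fact (cf.\ the proof of \cite[Theorem 2.8]{B-L05}, which the paper already leans on in the proof of \cref{thm:CartanCal}) that a quasi-isomorphism $\psi\colon\wedge W\to\wedge V$ of Sullivan algebras induces a quasi-isomorphism on derivation complexes with coefficients, since both $\wedge W$ and $\wedge V$ are cofibrant and $\psi$ is a weak equivalence between cofibrant objects. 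One realizes $\psi^*$ at the chain level by precomposition $\theta\mapsto\theta\circ\psi$ (using that $A$ becomes a $\wedge W$-module via $\eta\circ\psi$), and the fact that $\psi$ is a weak equivalence between cofibrant CDGAs makes this a quasi-isomorphism; alternatively one factors $\psi$ as a trivial cofibration followed by its retraction and checks each factor, exactly the ``surjective trick'' used in the proof of \cref{prop:relate_Cartan_calculi}.

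\textbf{The main obstacle.} The delicate point is Part (2): unlike the coefficient variance, varying the \emph{source} algebra is not automatically homotopy-invariant, because $\Der(\wedge V, A)$ is not obviously a derived functor in that variable. The subtlety is precisely the one flagged in \cref{rem:CP^2}---a quasi-isomorphism need not induce a quasi-isomorphism on derivation complexes in general. What rescues Part (2) is that $\psi$ is a map \emph{between Sullivan (cofibrant) algebras}, so one may legitimately use lifting and the existence of a homotopy inverse in the model category $\mathcal{A}$ of \cite{BG}; the honest work is verifying that the retraction/section maps assemble into the required chain homotopy inverse on derivations, which is routine once the cofibrancy hypotheses are in place but is the step where all the structure is actually used.
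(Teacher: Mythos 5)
Your Part (1) is essentially identical to the paper's proof: the paper disposes of the whole corollary in two lines, observing that \(\HochschildModel_{(1)}\) is a semifree \(\wedge V\)-module and invoking \cite[Theorem 6.10]{FHT1} together with the naturality of \(\lambda\) from \cref{prop:lambda_map}, and your Part (1) is precisely the coefficient-variance half of that theorem transported across \(\lambda\). For Part (2), however, you take a genuinely different route. The paper stays inside semifree module theory: under \(\lambda\) and the adjunction \(\Hom_{\wedge W}(\HochschildModel^{W}_{(1)}, A)\cong\Hom_{\wedge V}(\wedge V\otimes_{\wedge W}\HochschildModel^{W}_{(1)}, A)\), the map \(\psi^*\) becomes \(\Hom_{\wedge V}(-,A)\) applied to a quasi-isomorphism between semifree \(\wedge V\)-modules, and the source-variance half of \cite[Theorem 6.10]{FHT1} finishes both parts by one uniform mechanism, with no model-category input beyond semifreeness. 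You instead pass to the model category of \cite{BG} and argue via a homotopy inverse of \(\psi\) and the surjective trick, as in the proof of \cite[Theorem 2.8]{B-L05}. Your route is legitimate and has the merit of matching the maneuvers the paper itself performs in the proofs of \cref{thm:CartanCal} and \cref{prop:relate_Cartan_calculi}; the paper's route buys a shorter, self-contained argument that treats both variances symmetrically and never needs homotopies of algebra maps.

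Two caveats on your Part (2). First, the step you call ``routine'' is the actual crux and is not quite routine as stated: if \(\psi\simeq\psi'\), then \(\Der(\wedge W, A)\) taken with the \(\wedge W\)-module structure \(\eta\psi\) and with \(\eta\psi'\) are \emph{different complexes} (both the differential and the Leibniz rule depend on the base map), so ``homotopic maps induce chain-homotopic maps on derivations'' is not even well-typed until one builds a comparison of the two complexes from the homotopy; this is exactly the content of the argument in \cite{B-L05}, so the citation carries real weight rather than being a formality. Second, your phrase ``factors \(\psi\) as a trivial cofibration followed by its retraction'' garbles the surjective trick: the factorization is a trivial cofibration followed by a \emph{surjective quasi-isomorphism}, and the retraction of the trivial cofibration is only the device producing a homotopy inverse. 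Finally, a small warning about your guiding heuristic: \(\Hom(\overline{V}, A)\) is not a product of shifted copies of \(A\) as a complex, since the transported differential has a twisted summand (precomposition with \(d\) on \(\HochschildModel_{(1)}\)); it is the semifree filtration, which you do invoke, and not the naive product description, that makes the coefficient-invariance argument valid.
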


\begin{proof}
The differential graded module $\HochschildModel_{(1)}$ is a semifree $\wedge V$-module. Then the results follow from \cite[Theorem 6.10]{FHT1} and the naturality of
$\lambda$ in Proposition \ref{prop:lambda_map}.
\end{proof}

Poincar\'e duality for manifolds gives rise to a duality between the direct summands $H_*^{(k)}(LM)$ and $HH^*_{(k)}(\SpaceModel,A)$.
To see this, let $A$ be an $m$-dimensional Poincar\'{e} duality model for a simply-connected manifold $M$ introduced in \cite{LS08} equipped with $\varphi : \SpaceModel \to A$ a quasi-isomorphism of CDGAs.
Denote by $A_* := \Hom (A,\Q)$ the linear dual of $A$ with the differential defined by $\alpha \mapsto -(-1)^{|\alpha|} \alpha \circ d_A$ for $\alpha \in A_*$, where $d_A$ denotes  the differential of $A$; see Remark \ref{rem:Sign_ModelEv}.
Let $\{ a_i \}_{i=1}^N$ be a homogeneous basis with $a_N=w_A$ a representative of the fundamental class of $M$. We denote by  $\{ a_i^* \}_{i=1}^N$ the dual basis. Let  $D_A : A \to A_*$ be the duality map; that is, $D_A$ is an isomorphism of $A$-modules defined by $D_A (a)(b) = \omega_A^*(ab)$.
Observe that $A$ is regarded as a $\SpaceModel$-module via  $\varphi$.

Put $\HochschildModel^A := A\otimes _{\SpaceModel}\HochschildModel \cong A\otimes \wedge\overline{V}$. We observe that $\HochschildModel^A$ is also a rational model for $LM$. The direct sum decomposition of $\HochschildModel$ mentioned above induces a decomposition $\HochschildModel^A = \oplus_k \HochschildModel^A_{(k)}$, where $\HochschildModel^A_{(k)}:=A\otimes _{\SpaceModel}\HochschildModel_{(k)}$.
Then, we have an isomorphism
\begin{eqnarray}\label{eq:PD}
  \xymatrix@C10pt@R20pt{
    \ \ \ \ \ \ \ \PD : HH^n_{(k)} (\SpaceModel)
    \ar[r]^-{\varphi_*}_-{\cong}
    &
    HH^n_{(k)} (\SpaceModel,A)
    \ar[r]^-{D_{A*}}_-{\cong}
    &
    HH^{n-m}_{(k)} (\SpaceModel, A_*)
    \ar[ld]_-{\text{adjoint}}^-{\cong}
    \\
    &
    H^{n-m}( \Hom (\HochschildModel^A_{(k)} , \Q))
    \ar[r]^-{(\varphi \otimes 1)^*}_-{\cong}
    &
    H_{-n+m}^{(k)}(LM).
  }
\end{eqnarray}

Let $\Omega\aut M_0$ be the connected component of the based loop space $\Omega \aut M$ containing the constant loop at $id\in \aut M$.
We here recall the morphism $\Gamma_1$ due to F\'elix and Thomas.
Let $g :\Omega \aut M_0 \times M \to LM$ be a map defined by
\[
g(\gamma ,x)(t) = \gamma (t)(x)
\]
for $\gamma \in \Omega \aut M_0$, $x \in M$ and $t\in S^1$.
In \cite{FT04}, F\'elix and Thomas show that the morphism $\Gamma_1$ defined by the composite
\[
  \xymatrix{
    \pi_n (\Omega\aut M_0) \otimes \Q
    \ar[r]^-{\text{Hur}}
    &
    H_n (\Omega\aut M_0;\Q)
    \ar[d]_-{\times [M]}
    \\
    &
    H_{n+m}(\Omega \aut M_0 \times M  ;\Q)
    \ar[r]^-{g_*}
    &
    H_{n+m}(LM  ;\Q)
  }
\]
is injective for $n\geq 1$ and that the image of $\Gamma_1$ is isomorphic to $H^{(1)}_{n+m}(LM)$, where $\text{Hur}$ denotes the Hurewicz map and $[M]\in H_m(M)$ is the fundamental class.

The main theorem in this section is as follows.
\begin{thm}\label{thm:Model2_aut} With the notation above,
  the diagram
  \[
    \xymatrix@C30pt@R20pt{
      \pi_n (\aut M) \otimes \Q
      \ar[rr]^-{\Phi}_-{\cong}
      &&
      H_n (\Der (\SpaceModel))
      \ar[d]^-{\lambda}_-{\cong}
      \\
      \pi_{n-1} (\Omega\aut M_0) \otimes \Q
      \ar[r]^-{\Gamma_1}_-{\cong}
      \ar[u]^-{\partial}_-{\cong}
      &
      H_{n+m-1}^{(1)}(LM)
      \ar[r]^-{\PD^{-1}}_-{\cong}
      &
      HH^{-n+1}_{(1)} (\wedge V)
    }
  \]
is commutative, where $\partial$ is the adjoint map.
\end{thm}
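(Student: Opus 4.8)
The plan is to prove commutativity of the diagram in \cref{thm:Model2_aut} by unwinding the definitions of the three isomorphisms $\Phi$, $\Gamma_1$, and $\PD$ at the chain level and tracking a generator through both composites. The top-right path sends $\theta \in \pi_n(\aut M)\otimes\Q$ first via Sullivan's isomorphism to $\Phi(\theta) = \theta' \in H_n(\Der(\wedge V))$, where $\theta'$ is read off from a Sullivan representative $(\mu\circ k)(\theta) = 1\otimes 1 + \iota\otimes\theta'$ of the adjoint $\ad(\theta)\colon S^n\times M\to M$, and then applies the isomorphism $\lambda\colon \Der(\wedge V)\to\Hom_{\wedge V}(\HochschildModel_{(1)},\wedge V)$ of \cref{prop:lambda_map}, giving the class $\lambda(\theta')(\bar v) = (-1)^{|\theta'|}\theta'(v)$. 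The bottom path first identifies $\pi_n(\aut M)$ with $\pi_{n-1}(\Omega\aut M_0)$ via the adjunction $\partial$, applies the F\'elix--Thomas map $\Gamma_1$ built from the evaluation $g\colon \Omega\aut M_0\times M\to LM$, and then transports the result through the Poincar\'e-duality isomorphism $\PD^{-1}$ of \eqref{eq:PD} into $HH^{-n+1}_{(1)}(\wedge V)$.

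First I would fix a Poincar\'e duality model $A$ with $\varphi\colon\wedge V\xrightarrow{\simeq}A$ and make all three maps explicit on the level of the model $\HochschildModel^A = A\otimes\wedge\overline V$. The key computation is to produce a chain-level representative of $\Gamma_1(\partial\theta)$ inside $H^{(1)}_{n+m-1}(LM)\cong H_{-n+m+1}((\HochschildModel^A_{(1)})_*)$. For this I would use the Sullivan representative of the map $g$, which is essentially the same data as the Sullivan representative of $\ad(\theta)$: the adjoint relationship $\partial$ and the construction of $g(\gamma,x)(t)=\gamma(t)(x)$ mean that the $\wedge\overline V$-linear part of a model for $g$ is controlled by $\theta'$, exactly the class appearing in $\Phi$. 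Concretely, since $\Gamma_1$ is the composite of the Hurewicz map, the cross product with $[M]$, and $g_*$, and since the image of $\Gamma_1$ lands in the length-one Hodge summand $H^{(1)}$, the dual class pairs a generator $\bar v\in\overline V$ against $\theta'(v)\cdot w_A$ (the fundamental class factor coming from $\times[M]$). Running this through the adjoint and $D_A$-duality steps that define $\PD$ in \eqref{eq:PD} should reproduce precisely $\lambda(\theta')$ up to the sign $(-1)^{|\theta'|}$ built into the definition of $\lambda$.

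The technical heart, and the step I expect to be the main obstacle, is matching the signs and the duality bookkeeping coming from $D_A$ and the adjunction isomorphisms against the sign $(-1)^{|\theta|}$ in the definition of $\lambda$ and the orientation conventions in the integration maps defining $\Gamma_1$. In particular I would need to verify that the composite $D_{A*}\circ\varphi_*$ followed by the adjoint, applied to the class $\lambda(\theta')$, agrees with the homology class $g_*(\mathrm{Hur}(\partial\theta)\times[M])$ under the identification $H^{(1)}_{n+m-1}(LM)\cong H_{-n+m+1}((\HochschildModel^A_{(1)})_*)$; this requires keeping careful track of the degree shift by $m$ and of the sign in the dual differential $\alpha\mapsto -(-1)^{|\alpha|}\alpha\circ d_A$ (the convention fixed in \cref{rem:Sign_ModelEv}).

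Finally, I would assemble these into a single commuting square. The cleanest way to organize the verification is to show that the outer square commutes by checking it on the summand $\HochschildModel^A_{(1)}$ only, which is legitimate because every map in the diagram respects the Hodge (word-length) decomposition and $\Phi$, $\Gamma_1$ both land in the length-one part. Thus it suffices to compare the two $\wedge V$-linear (equivalently $A$-linear) maps $\overline V\to A$ and check they induce the same class in $HH^{-n+1}_{(1)}(\wedge V)$ after applying $\PD$. Once the geometric model for $g$ is identified with $\ad(\theta)$ via $\partial$ (so that both paths are governed by the same $\theta'$), the remaining content is purely the sign and duality reconciliation described above, and the diagram follows.
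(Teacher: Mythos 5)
Your overall framework does match the paper's: fix a Poincar\'e duality model $\varphi\colon \SpaceModel\to A$, work with $\HochschildModel^A$, restrict the comparison to the length-one Hodge summand, and finish with sign/duality bookkeeping. But there is a genuine gap at the step you compress into ``the Sullivan representative of $g$ is essentially the same data as the Sullivan representative of $\ad(\theta)$.'' That identification is the actual content of the theorem, and it does not follow from the adjunction alone. At the model level, $g(\gamma,x)(t)=\gamma(t)(x)$ is handled in the paper by factoring $g=\ev'\circ(g'\times 1)$ with $g'=(\ad')^{-1}\circ\ad\circ\inc'\colon \Omega\aut M_0\to\Map(M,LM)$, and making this chain-level requires three constructions your outline does not supply: (i) a Sullivan model for the component $\Omega\aut M_0$, which is delicate precisely because $\aut M$ is not simply connected, so the standard Buijs--Murillo component results do not apply; the paper proves \cref{lem:model_OmegaAut1} by exhibiting the evaluation model $\omega_0$ as a KS-extension and using the H-space structure of $\aut M$ to get nilpotence of the holonomy action; (ii) a rational model for the adjoint correspondence between function spaces, which is \cref{model:ad} in \cref{app:ModelAd}, built by applying the Brown--Szczarba construction \emph{twice}; and (iii) an explicit chain-level model $\iota$ for $\partial$ itself (\cref{prop:ModelConnHom}), proved via a homotopy-commutative comparison through the evaluation $\Omega\aut M_0\times S^1\to \aut M$ --- at the model level $\partial$ is not just ``the adjunction.''

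Relatedly, your pairing heuristic (that the dual class pairs $\bar v$ against $\theta'(v)\cdot w_A$) is the right answer, but you have no mechanism to compute $\Gamma_1(\partial\theta)$ without first relating $\Phi$ to the Brown--Szczarba data: the paper needs \cref{lem:PhiTheta}, which channels $\Phi$ through the Buijs--Murillo quasi-isomorphism $\Psi\colon\Der(\SpaceModel,A)\to\Hom(S_\varphi,\Q)$ and the isomorphism $\nu$ computing rational homotopy from the linear part of the model, before the final diagram chase with $\lambda$, $\lambda'$, $\Psi'$ and ${\mathcal M}_{\Gamma_1}$ from \eqref{eq:Gamma_1} can be run. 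You do correctly flag the dual-differential sign convention (\cref{rem:Sign_ModelEv}) --- the paper indeed adjusts the evaluation-map model's sign for exactly this reason --- but you misidentify that bookkeeping as ``the main obstacle'': it is the lesser issue, while the missing model-building of \cref{prop:ModelConnHom} and \cref{prop:ModelGamma1} is where the proof actually lives.
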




We note that Theorem \ref{thm:Model2_aut} gives another proof of \cite[Theorem 2]{FT04}.
In order to prove Theorem \ref{thm:Model2_aut}, we first observe rational models for $\Omega \aut M_0$ and the adjoint map $\partial$ by using the rational models for function spaces due to Brown and Szczarba \cite{BS97}.
Remark that the proof of the theorem due to F\'elix and Thomas uses a rational model for $\Gamma_1$ constructed by a  Haefliger model  \cite{Haefliger82} for the space of sections of a fibration.

Let $(\wedge (V\otimes A_*), d)$ and $\wedge S_\varphi = \left( \wedge \left( \overline{(V\otimes A_*)}^1 \oplus (V\otimes A_*)^{\geq 2} \right) , d \right)$ be the Brown-Szczarba models for $\Map (M, M)$ and $\aut M$, respectively. For the details of  Brown-Szczarba models, see \cite{BS97, BM06, HKO08} and also Appendix \ref{app:ModelAd}.

Let ${\mathcal M}_{S^1}=(\wedge (u), 0)$ be the Sullivan model for $S^1$ with $|u|=1$.
Since $\aut M$ is connected, nilpotent space \cite{HMR75} of finite type, it follows that the function space
 $L \aut M$ admits a Brown-Szczarba model of the form $(\wedge (S_\varphi \otimes \wedge (u)_*) ,d)$.
A Sullivan representative for the constant loop in $L \aut M$ at $id \in \aut M$ is of the form $\wedge S_\varphi \to \wedge (u)$ defined by $w\mapsto 0$ for $w\in S_\varphi$. This induces an augmentation  $\varepsilon : \wedge (S_\varphi \otimes \wedge (u)_*) \to \Q$
of the model for $L \aut M$.
Therefore, by virtue of \cite[Corollary 4.7]{BM06}, we have
a model
\[
(\wedge S_{\varepsilon} , d) = \left( \wedge \left( \overline{(S_\varphi \otimes \wedge (u)_*)}^1 \oplus (S_\varphi \otimes \wedge (u)_*)^{\geq 2} \right) , d \right),
\]
for the connected component $L \aut M_0$ of $L \aut M$ containing the constant loop. Moreover, it follows from
 \cite[Proposition 4,2, Theorem 4.5]{BM06} that the CDGA morphism
\begin{equation}
\wedge (S_\varphi \otimes \wedge (u)_*) \to \wedge S_{\varepsilon},
\hspace{1em}
w\otimes \beta \mapsto
\left\{
\begin{array}{ll}
\pr (w\otimes \beta) & (  |w\otimes \beta| \geq 1 )\\
0 & ( |w\otimes \beta|=0)
\end{array}
\right.
\end{equation}
for $w\in S_\varphi$ and $\beta \in \wedge (u)_*$ is a model for the inclusion $L \aut M_0 \hookrightarrow L\aut M$, where $\pr$ is the projection $(S_\varphi \otimes \wedge (u)_*)^{\geq 1} \to S_{\varepsilon}$.
The result \cite[Corollary 4.7]{BM06} yields that a morphism
\[
\omega_0 : \wedge S_\varphi \to \wedge S_{\varepsilon}
\]
of CDGAs, which is  defined by the projection onto $\overline{(S_\varphi \otimes \wedge (u)_*)}^1$ in $S^1_\varphi$ and $\omega_0 (w)=w\otimes 1^*$ for  $w \in S^{\geq 2}_\varphi$, is a rational model of the evaluation map at the base point $\ev_0 : L \aut M_0 \to \aut M$.
\begin{lem}\label{lem:model_OmegaAut1}
The fiber of $\omega_0$ at the canonical augmentation of $\wedge S_\varphi$ over $\Q$ is a Sullivan model for $\Omega \aut M_0$.
\end{lem}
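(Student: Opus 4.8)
The plan is to identify the fiber of the model $\omega_0 \colon \wedge S_\varphi \to \wedge S_\varepsilon$ with a Sullivan model for $\Omega \aut M_0$ by analyzing the fibration structure underlying the evaluation map. The essential point is that $\omega_0$ is a rational model for $\ev_0 \colon L\aut M_0 \to \aut M$, whose homotopy fiber over the base point is precisely $\Omega \aut M_0$. Thus the geometric statement ``the fiber of $\ev_0$ is $\Omega\aut M_0$'' should be transported across the rational model by a standard application of the correspondence between fibrations and relative Sullivan models.

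First I would record that, since $\aut M$ is a connected nilpotent space of finite type, the evaluation fibration $\ev_0 \colon L\aut M_0 \to \aut M$ has homotopy fiber $\Omega\aut M_0$ (this is the standard evaluation fibration whose fiber over a point is the based loop space of the component). Next I would invoke the fact that $\omega_0$ is a model for $\ev_0$, established just above the lemma via \cite[Corollary 4.7]{BM06}. The plan is then to form the fiber of $\omega_0$ at the canonical augmentation $\varepsilon_{S_\varphi} \colon \wedge S_\varphi \to \Q$; concretely this is the pushout (relative tensor product)
\[
  \Q \otimes_{\wedge S_\varphi} \wedge S_\varepsilon,
\]
where $\wedge S_\varphi$ acts on $\Q$ through the augmentation and on $\wedge S_\varepsilon$ through $\omega_0$. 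I would check that $\omega_0$ can be arranged (up to the usual factorization into a trivial cofibration followed by a fibration, or directly from its explicit form on generators) to be a relative Sullivan algebra, so that this fiber computes the rational homotopy type of the homotopy fiber. Since $\omega_0$ sends the generators in $S_\varphi^{\geq 2}$ to $w \otimes 1^*$ and projects the degree-one generators, the quotient by the ideal generated by the image has an explicit description, and one verifies it is again a Sullivan algebra with the induced differential.

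The key step is to match this algebraic fiber with the known Sullivan model of $\Omega\aut M_0$. By the general theory relating fibrations and relative Sullivan models (\cite[Proposition 15.5 and Theorem 15.3]{FHT1}), the fiber of a Sullivan model for a fibration is a Sullivan model for the homotopy fiber, provided the total space model is relative Sullivan over the base model and a mild connectivity or nilpotence hypothesis holds. Here the base $\aut M$ and the component $\Omega\aut M_0$ are both nilpotent of finite type, so this applies. I expect the main obstacle to be verifying that $\omega_0$, as written on generators, genuinely presents $\wedge S_\varepsilon$ as (or is quasi-isomorphic to) a relative Sullivan extension of $\wedge S_\varphi$ compatible with the differential — in other words, that the explicit projection-and-inclusion formula for $\omega_0$ is a fibration in the model-category sense rather than merely a morphism of CDGAs. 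Once the relative Sullivan structure is confirmed, identifying the quotient complex as the fiber and hence as a Sullivan model for $\Omega\aut M_0$ is a formal consequence of the cited results.
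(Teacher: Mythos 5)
Your first half matches the paper: the paper also proves the lemma by checking directly that $\omega_0$ is a KS-extension, computing $\overline{(S_\varphi\otimes\wedge(u)_*)}^1$ explicitly and exhibiting an isomorphism $\wedge S_\varepsilon\cong \wedge S_\varphi\otimes\wedge\bigl(\bigl(\overline{S_\varphi^2}\oplus S_\varphi^{\geq 3}\bigr)\otimes\Q u^*\bigr)$, so your worry that this is ``the main obstacle'' is addressed exactly as you propose (and no factorization trick is needed; note that replacing $\omega_0$ by a fibrant factorization would in any case change the literal fiber the lemma is about).

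The genuine gap is in your final step. You invoke \cite[Proposition 15.5 and Theorem 15.3]{FHT1} on the grounds that ``the base $\aut M$ and the component $\Omega\aut M_0$ are both nilpotent of finite type.'' Those results require the \emph{base} to be simply connected, and $\aut M$ is not simply connected in general; the paper even flags this exact trap in the sentence preceding the proof, remarking that \cite[Corollary 4.8]{BM06} is not applicable to $\omega_0$ for this reason. Moreover, nilpotence of the two spaces is not the relevant hypothesis: what is needed is nilpotence of the \emph{holonomy action} of $\pi_1(\aut M)$ on $H^*(\Omega\aut M_0;\Q)$, which does not follow from nilpotence of the spaces and must be argued. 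The paper supplies the missing argument: since $\aut M$ is an H-space, the map $(x,\gamma)\mapsto x\cdot\gamma$ gives a homotopy equivalence $\aut M\times\Omega\aut M_0\simeq L\aut M_0$, and a homotopy defining the holonomy action factors through this product, so the action is trivial (hence nilpotent); one then concludes with \cite[20.3 Theorem]{Halperin83}, the KS-extension theorem valid for nilpotent fibrations, rather than with the simply-connected-base results of \cite{FHT1}. Without this (or some substitute establishing nilpotent holonomy), your citation does not apply and the proof is incomplete.
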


We remark that the result \cite[Corollary 4.8]{BM06} is not applicable to the morphism $\omega_0$ since $\aut M$ is not a simply-connected in general.

\begin{proof}[Proof of Lemma \ref{lem:model_OmegaAut1}]
For proving the assertion, it is enough to show that $\omega_0$ is a KS-extension; that is, $\wedge S_{\varepsilon}$ is a relative Sullivan algebra with base $\wedge S_\varphi$ and $\omega_0$ is the canonical inclusion; see, for example, \cite[Section 14]{FHT1} for relative Sullivan algebras.
Observe that
\[
(S_\varphi \otimes \wedge (u)_* )^0 = S_\varphi^1 \otimes \Q u^*
\hspace{1em}
\text{and}
\hspace{1em}
(S_\varphi \otimes \wedge (u)_* )^1 = (S_\varphi^1 \otimes \Q 1^*) \oplus (S_\varphi^2 \otimes \Q u^*).
\]
It is readily seen that $\overline{(S_\varphi \otimes \wedge (u)_*)}^1$ coincides with the complement of the morphism
\[
\xymatrix@C40pt{
S_\varphi^1 \otimes  \Q u^*
\ar[r]^-{(0, d_0\otimes 1)}
&
(S_\varphi^1 \otimes  \Q 1^*) \oplus (S_\varphi^2 \otimes  \Q u^*),
}
\]
where $d_0$ is the linear part of the differential of $\wedge S_\varphi$.
It follows that $\wedge S_{\varepsilon}$ is isomorphic to
\[
\wedge \left( (S_\varphi \otimes \Q 1^*) \oplus \left( \overline{S^2_\varphi} \oplus S_\varphi^{\geq 3}    \right) \otimes \Q u^*   \right)
\cong
\wedge S_\varphi \otimes \wedge \left( \left( \overline{S^2_\varphi} \oplus S_\varphi^{\geq 3}    \right) \otimes \Q u^*   \right)
\]
which is a relative Sullivan algebra with base $\wedge S_\varphi$.
Here, $\overline{S^2_\varphi}$ is the quotient space $S_\varphi^2/d_0 (S_\varphi^1)$.
Therefore, the morphism $\omega _0$ of CDGAs is a KS-extension with the fiber $\wedge \left( \left( \overline{S^2_\varphi} \oplus S_\varphi^{\geq 3}    \right) \otimes \Q u^*   \right)$.
Since $\aut M$ is an H-space,  we have a homotopy equivalence
\[
\aut M \times \Omega \aut M_0  \simeq L \aut M_0
\]
defined by
$(x, \gamma) \mapsto x\cdot \gamma$. A homotopy, which defines
the holonomy action of $\pi_1 (\aut M)$ on $H^* (\Omega \aut M_0)$, factors through the product. This implies that the $\pi_1 (\aut M)$-action
is trivial and hence nilpotent.
Therefore, by virtue of \cite[20.3 Theorem]{Halperin83}, we see that the fiber $\wedge \left( \left( \overline{S^2_\varphi} \oplus S_\varphi^{\geq 3}    \right) \otimes \Q u^* \right)$ is a Sullivan model for $\Omega\aut M_0$.
\end{proof}

Now, we recall facts on rational homotopy groups of nilpotent spaces.
Let $X$ be a connected nilpotent space of finite type and ${\mathcal M}_X=(\wedge W,d)$ a Sullivan model for $X$. It follows from \cite[11.3]{BG} that there is a natural isomorphism
\[
  \nu : \pi_n (X) \otimes \Q \longrightarrow \Hom (H^n(W,d_0), \Q)
\]
provided $\pi_n (X)$ is abelian, where $d_0$ is the linear part of the differential $d$.
Let $f:S^n \to X$ be a map which represents an element in $\pi_n (X)\otimes \Q$.
Then, the image $\nu (f)$ is defined by the linear part of ${\mathcal M}_f$ a Sullivan representative of the map $f$.
We denote by ${\mathcal M}_{S^n}$ the Sullivan model for $S^n$ described in \cite[\S 12 Example 1]{FHT1} and $\int_{S^n} : {\mathcal M}_{S^n} \to \Q$ the chain map which assigns $1$ to a representative of the fundamental class of $S^n$.
Since ${\mathcal M}_f (w)$ is indecomposable for any $w$ in $(\wedge W)^n$,
it follows that $\nu(f)$ coincides with the map induced by the chain map $\int_{S^n} \circ {\mathcal M}_f |_{W}$ on $H^n(W, d_0)$.

Let $M$ be a simply-connected manifold. The monoid $\aut M$ and $\Omega\aut M_0$ are connected nilpotent H-spaces; see \cite{HMR75}.
Therefore, by the models mentioned above, we see that the dual spaces of $\pi_n (\aut M)\otimes \Q$ and $\pi_n (\Omega_* \aut M)\otimes \Q$ are isomorphic to the homology of $S_\varphi$ and  $ \left( \overline{S^2_\varphi} \oplus S_\varphi^{\geq 3} \right) \otimes \Q u^* $ the linear parts of the Sullivan models for $\aut M$ and $\Omega \aut M_0$, respectively. Observe that the fundamental groups $\pi_1 (\aut M)$ and $\pi_1 (\Omega \aut M_0)$ are abelian.

In what follows, we put
\[
T_\varphi := \overline{S^2_\varphi} \oplus S_\varphi^{\geq 3},
\]
where the differential of $T_\varphi$ is induced by the linear part $(S_\varphi , d_0)$.
Let $\iota : S_\varphi \to  T_\varphi \otimes \Q u^* $ be the composite of the inclusion $ T_\varphi \hookrightarrow T_\varphi \otimes \Q u^* $ defined by $w \mapsto (-1)^{|w|}w\otimes u^*$ and the projection $ \pr' : (S_\varphi)^{\geq 2} \to T_\varphi$.

\begin{prop}\label{prop:ModelConnHom}
The morphism $\iota$ is a rational model for the adjoint map $\partial$; that is, the diagram
\begin{equation}\label{diag:ModelConnHom}
    \xymatrix@C30pt@R20pt{
      \pi_{n-1} (\Omega \aut M_0) \otimes \Q
      \ar[d]^-{\partial}_-{\cong}
      \ar[r]^-{\nu}_-{\cong}
      &
      \Hom_{\Q}\left( H^{n-1}\left( T_\varphi \otimes \Q u^* \right), \Q \right)
      \ar[d]^-{H(\iota)^*}
      \\
      \pi_{n}(\aut M)\otimes \Q
      \ar[r]^-{\nu}_-{\cong}
      &
      \Hom_{\Q}(H^{n}(S_\varphi) , \Q ),
    }
  \end{equation}
  is commutative for $n\geq 2$.
  As a consequence, the morphism
$H(\iota)  : H^n (S_\varphi )   \longrightarrow H^{n-1}( T_\varphi \otimes \Q u^*)
$
induced by $\iota$ is an isomorphism for $n\geq 2$.

\end{prop}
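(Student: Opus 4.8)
The plan is to prove commutativity of the square \eqref{diag:ModelConnHom}; the final assertion that $H(\iota)$ is an isomorphism then follows formally, since $\nu$ and $\partial$ are isomorphisms and all spaces involved are of finite type, so that $H(\iota)^*$ being an isomorphism forces $H(\iota)$ to be one. To establish commutativity I would use the description of $\nu$ recalled just before the statement: for a map $g\colon S^k\to X$ into a connected nilpotent space with Sullivan model $(\wedge W,d)$, the functional $\nu(g)$ is the one induced on $H^k(W,d_0)$ by the chain map $\int_{S^k}\circ\,{\mathcal M}_g|_{W}$. Hence it suffices to verify, for every $f$ representing a class in $\pi_{n-1}(\Omega\aut M_0)\otimes\Q$ and every $d_0$-cocycle $w\in S_\varphi^n$, the scalar identity
\[
\int_{S^n}{\mathcal M}_{\partial f}(w)\;=\;(-1)^{n}\int_{S^{n-1}}{\mathcal M}_{f}(\pr'(w)\otimes u^*),
\]
whose right-hand side is exactly $\nu(f)$ evaluated on $H(\iota)[w]=(-1)^{n}[\pr'(w)\otimes u^*]$.

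The geometric input is that the adjoint $\partial f\colon S^n\to\aut M$ factors as
\[
S^n=S^1\wedge S^{n-1}\xrightarrow{1\wedge f}S^1\wedge\Omega\aut M_0\xrightarrow{\ev}\aut M,
\]
where $\ev$ is the counit of the loop--suspension adjunction, i.e.\ the restriction to based loops of the evaluation $S^1\times L\aut M_0\to\aut M$, $(t,\gamma)\mapsto\gamma(t)$. I would model this evaluation with the Brown--Szczarba machinery of Appendix~\ref{app:ModelAd}: on linear generators the associated CDGA morphism $\wedge S_\varphi\to{\mathcal M}_{S^1}\otimes\wedge(S_\varphi\otimes\wedge(u)_*)$ has the form $w\mapsto 1\otimes(w\otimes 1^*)\pm u\otimes(w\otimes u^*)$ modulo decomposables and higher Brown--Szczarba correction terms. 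Restricting the free loop model $\wedge S_\varepsilon$ to the fiber model $\wedge(T_\varphi\otimes\Q u^*)$ of $\omega_0$ through the projection computed in the proof of \cref{lem:model_OmegaAut1}---which annihilates the base generators $w\otimes 1^*$ and carries $w\otimes u^*$ to $\pr'(w)\otimes u^*$ for $w\in S_\varphi^{\geq 2}$---turns this linear part into $w\mapsto\pm\,u\otimes(\pr'(w)\otimes u^*)$.

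Composing further with ${\mathcal M}_f$ on the second tensor factor produces a Sullivan representative with ${\mathcal M}_{\partial f}(w)\equiv\pm\,u\otimes{\mathcal M}_f(\pr'(w)\otimes u^*)$ modulo terms killed by $\int_{S^n}$. Since the fundamental class of $S^n=S^1\wedge S^{n-1}$ is the exterior product of those of $S^1$ and $S^{n-1}$, one has $\int_{S^n}=\int_{S^1}\cdot\int_{S^{n-1}}$ with $\int_{S^1}(u)=1$, and the displayed scalar identity drops out. The decomposable and higher Brown--Szczarba terms contribute nothing: after fiber restriction and applying ${\mathcal M}_f$ they land in products of generators of ${\mathcal M}_{S^{n-1}}$, and for $n\geq2$ there are no decomposables in degree $n-1$ of the sphere model, so they vanish before integration. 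I expect the main obstacle to be the sign bookkeeping: one must reconcile the Koszul signs produced by the Brown--Szczarba evaluation morphism, by the factorization $\int_{S^n}=\int_{S^1}\cdot\int_{S^{n-1}}$, and by the twist $w\mapsto(-1)^{|w|}w\otimes u^*$ built into the definition of $\iota$, and check that their net effect is exactly the factor $(-1)^{n}$ required above. Indeed the sign $(-1)^{|w|}$ in $\iota$ should be precisely what is needed to absorb these contributions, so the verification of this single sign is the crux of the argument.
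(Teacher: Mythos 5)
Your proposal follows essentially the same route as the paper: the paper likewise factors $\partial(f)$ through the evaluation $\Omega\aut M_0\times S^1\to\aut M$ (its diagram (\ref{ConnHom})), models it by composing the Buijs--Murillo evaluation model for $L\aut M\times S^1\to\aut M$ with the fiber projection ${\mathcal M}_{\inc'}$ from the proof of \cref{lem:model_OmegaAut1}, and integrates over the product decomposition of the fundamental class of $S^n$. The one step you leave as ``expected''---the net sign---is settled in the paper by the explicit formula ${\mathcal M}_{\ev}(w) = -(\pr'(w)\otimes u^*)\otimes u$ (with the sign convention of \cref{rem:Sign_ModelEv}), which after applying $1\otimes\int_{S^1}$ produces exactly the twist $(-1)^{\deg{w}}$ built into $\iota$, confirming your guess.
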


\begin{proof} We may assume that $M$ is a rational space without loss of generality.
  Let $f : S^{n-1} \to \Omega \aut M_0$ be a based map which represents an element in $\pi_{n-1} (\Omega \aut M_0)\cong
  \pi_{n-1} (\Omega \aut M_0) \otimes \Q$.
  Consider a commutative diagram
\begin{equation}\label{ConnHom}
    \xymatrix@C40pt@R20pt{
      S^n
      \ar@{=}[d]
      \ar@/^10pt/[rrd]^-{\partial (f)}
      &&&
      \\
      S^{n-1} \wedge S^1
      \ar[r]^-{f\wedge 1}
      &
      \Omega \aut M_0 \wedge S^1
      \ar[r]^-{\ev}
      &
      \aut M
      \\
      S^{n-1} \times S^1
      \ar[r]^-{f\times 1}
      \ar@{->>}[u]
      &
      \Omega \aut M_0 \times S^1
      \ar[r]^-{\ev}
      \ar@{->>}[u]
      &
      \aut M,
      \ar@{=}[u]
    }
  \end{equation}
where $\ev$ is the evaluation map.
Let $\inc' : \Omega \aut M_0 \hookrightarrow L \aut M$ be the inclusion.
By the construction of the model for $\Omega \aut M_0$ in the proof of Lemma \ref{lem:model_OmegaAut1}, the CDGA morphism ${\mathcal M}_{\inc'} : \wedge (S_\varphi \otimes \wedge (u)_*) \to \wedge \left( T_\varphi \otimes \Q u^*   \right)$ defined by ${\mathcal M}_{\inc'} (w\otimes 1^*) = 0$ and
\[
{\mathcal M}_{\inc'}(w\otimes u^*)
=
\left\{
\begin{array}{ll}
0 & (|w|=1)\\
\pr' (w) \otimes u^* & (|w|\geq 2)
\end{array}
\right.
\]
for $w\in S_\varphi$ is a rational model for $\inc'$.
Therefore, by combining the rational models of $\inc'$ and the evaluation map $L\aut M \times S^1 \to \aut M$ due to Buijs and Murillo \cite{BM06}, we see that $\ev$ admits a Sullivan representative
\[
{\mathcal M}_{\ev}: \wedge S_\varphi \to \wedge \left( T_\varphi \otimes \Q u^*   \right) \otimes \wedge (u)
\]
defined by
${\mathcal M}_{\ev}(w) = - ( \pr'(w)\otimes u^* ) \otimes u$.
  Let ${\mathcal M}_f : \wedge (T_\varphi \otimes \Q u^*) \to {\mathcal M}_{S^{n-1}}$ and ${\mathcal M}_{\partial (f)} : \wedge S_\varphi \to {\mathcal M}_{S^n}$ be Sullivan representatives of $f$ and $\partial (f)$, respectively.
 Then we have the following homotopy commutative diagram of CDGAs
\[
\xymatrix@C50pt@R15pt{
&
&
{\mathcal M}_{S^n}
\ar[r]^-{\int_{S^n}}
 \ar[d]^-{\pi}
&
\Q
\ar@{=}[d]
\\
\wedge S_\varphi
\ar[r]^-{{\mathcal M}_{\ev}}
\ar@/^15pt/[rru]^-{{\mathcal M}_{\partial(f)}}
\ar[rd]_-{\wedge \iota}
&
\wedge (T_\varphi \otimes \Q u^*) \otimes \wedge (u)
\ar[r]^-{{\mathcal M}_f \otimes 1}
\ar[d]^-{1\otimes \int_{S^1}}
&
{\mathcal M}_{S^{n-1}} \otimes \wedge (u)
\ar[r]^-{\int_{S^{n-1}}\otimes \int_{S^1}}
\ar[d]^-{1\otimes \int_{S^1}}
&
\Q
\ar@{=}[d]
\\
&
\wedge (T_\varphi \otimes \Q u^*)
\ar[r]^-{{\mathcal M}_f}
&
{\mathcal M}_{S^{n-1}}
\ar[r]^-{\int_{S^{n-1}}}
&
\Q,
}
\]
  where $\pi$ is the canonical morphism which sends the fundamental class of $S^n$ to the fundamental class of $S^{n-1} \times S^1$ on cohomology.
The uniqueness up to homotopy of a Sullivan representative and commutativity of the diagram (\ref{ConnHom}) enable us to conclude that the top and left-hand side diagram is homotopy commutative.
  Therefore, we have
  \begin{align}
    \nu \circ  \partial (f)
    &=
    \left( \textstyle\int_{S^n} \circ {{\mathcal M}_{\partial (f)}} \right)|_{S_\varphi}
    \simeq
    \left( \textstyle\int_{S^{n-1}} \circ {\mathcal M}_f \circ { \wedge \iota} \right)|_{S_\varphi}
    \\
    &=
    \left( \textstyle\int_{S^{n-1}} \circ {{\mathcal M}_f} \right) |_{T_\varphi \otimes \Q u^*} \circ \iota
    =
    \nu (f) \circ \iota.
  \end{align}
  This completes the proof.
\end{proof}


We next consider a rational model for $\Gamma_1$.
Let $\wedge (V\otimes \wedge (u)_*)$ be the Brown--Szczarba model for $LM$, where $\wedge V$ is a Sullivan model for $M$.
Then we identify  the model with the CDGA $\HochschildModel$ mentioned in Section \ref{sect:preliminaries} by the isomorphism
$
\xi :
\HochschildModel \stackrel{\cong}{\longrightarrow} \wedge (V\otimes \wedge (u)_*),
$
defined by
$v \mapsto v\otimes 1^*$ and
$
\bar{v} \mapsto (-1)^{|v|} v\otimes u^*
$
for $v\in V$.
\begin{lem}\label{lem:ModelLf}
Let $\wedge V_i$ be a minimal Sullivan model of a simply-connected space $X_i$ for each $i=1$ and $2$. Let $\HochschildModel_{X_i}$ be the Sullivan model for $LX_i$ and $\psi : \wedge V_2 \to \wedge V_1 $ a Sullivan representative for $f : X_1 \to X_2$. Then, a CDGA morphism $\HochschildModel \psi : \HochschildModel_{X_2} \to \HochschildModel_{X_1}$ defined by
$\HochschildModel \psi (v) = \varphi (v)$ and  $\HochschildModel \psi (\bar{v}) = s\psi (v)$
for $v\in V_2$ is a Sullivan representative for the map $Lf : LX_1 \to LX_2$.
\end{lem}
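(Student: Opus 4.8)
The plan is to exhibit $\HochschildModel\psi$ as the map induced by $f$ under the natural homotopy-pullback description of the free loop space, after first checking directly that the stated formula defines a genuine CDGA morphism. Recall from \cref{sect:preliminaries} that, since the $X_i$ are simply connected, $(\HochschildModel_{X_i}, D_i)$ is a Sullivan model for $LX_i$, that $D_i$ restricts to $d_i$ on $\wedge V_i$, and that $D_i$ is the unique extension with $[D_i, s]=0$, so that $D_i\bar v = -s\, d_i v$ on $\overline{V_i}$.

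First I would verify that $\HochschildModel\psi$ commutes with the differentials. The crucial point is the intertwining relation $\HochschildModel\psi\circ s = s\circ \HochschildModel\psi$. Both composites are $\HochschildModel\psi$-derivations of degree $-1$, and they agree on the algebra generators: on $v\in V_2$ both send $v$ to $s\psi(v)$, while on $\bar v$ both vanish (using $s^2=0$). Hence they coincide on all of $\HochschildModel_{X_2}$. Granting this, commutation of $\HochschildModel\psi$ with the differential on $V_2$ is immediate from $\psi d_2 = d_1\psi$, and on $\overline{V_2}$ it follows by computing
\[
\HochschildModel\psi(D_2\bar v)=-\HochschildModel\psi(s\, d_2 v)=-s\,\psi(d_2 v)=-s\, d_1\psi(v)=D_1\bigl(s\psi(v)\bigr)=D_1\HochschildModel\psi(\bar v),
\]
where the middle equality uses the intertwining relation and the last one uses $[D_1,s]=0$.

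Next, to see that $\HochschildModel\psi$ actually represents $Lf$, I would use the natural description of $LX$ as the homotopy pullback of the two diagonals $X\xrightarrow{\Delta}X\times X\xleftarrow{\Delta}X$, arising from $S^1=\mathrm{hocolim}(\ast\leftarrow \ast\sqcup\ast\rightarrow\ast)$. This identification is natural in $X$, so it realizes $Lf$ as the induced map of homotopy pullbacks determined by $f$ on the two legs and $f\times f$ on the base. Dualizing with $A_{PL}$, the model of $LX_i$ is the homotopy pushout of $\wedge V_i\xleftarrow{\mu}\wedge V_i\otimes\wedge V_i\xrightarrow{\mu}\wedge V_i$ along the multiplication $\mu$. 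Replacing one copy of $\mu$ by the relative Sullivan model $\wedge V_i\otimes\wedge V_i\hookrightarrow \wedge V_i\otimes\wedge V_i\otimes\wedge\overline{V_i}$ of the path fibration (with $D\bar v\equiv v\otimes 1-1\otimes v$ modulo decomposables) and forming the ordinary pushout along $\mu$ recovers $(\HochschildModel_{X_i},D_i)$, with the inclusion $\wedge V_i\hookrightarrow\HochschildModel_{X_i}$ modeling $\ev_0$. The map of diagonal data $(\psi,\psi\otimes\psi)$ then induces a morphism of these homotopy pushouts, and tracking it through the construction sends $v\mapsto\psi(v)$ and the fiber generator $\bar v$ to the suspension $\overline{\psi(v)}=s\psi(v)$; this is exactly $\HochschildModel\psi$. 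By naturality of the pullback description, this morphism represents $Lf$, which finishes the proof.

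I expect the main obstacle to be the bookkeeping in this last step: one must check that the relative model of the diagonal pushes forward correctly along $\mu$, that the induced map carries $\bar v$ precisely to $s\psi(v)$ with the right signs, and that the homotopy-pullback identification is natural enough that the resulting morphism is genuinely a representative of $Lf$ rather than merely some CDGA map lying over $\psi$. This last subtlety is essential, since morphisms of KS-extensions covering a fixed base map $\psi$ need not be unique up to homotopy, so the strict commutation of $\HochschildModel\psi$ with the inclusions modeling $\ev_0$ would not by itself pin down the map in the fiber direction.
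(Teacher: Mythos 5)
Your verification that $\HochschildModel\psi$ is a morphism of CDGAs is complete and correct: the derivation argument for $\HochschildModel\psi\circ s=s\circ\HochschildModel\psi$ and the resulting computation with $D_i\bar v=-s\,d_iv$ match the structure of the model exactly. Your geometric step, however, takes a genuinely different route from the paper, and as written it has a gap at precisely the point you flag at the end. The paper does not use the pullback description at all: it invokes the Brown--Szczarba model $\wedge(V_i\otimes\wedge(u)_*)\cong\wedge(\wedge V_i\otimes\wedge(u)_*)/\mathcal{I}$ for $LX_i=\Map(S^1,X_i)$, for which functoriality in the \emph{target} variable is strict, so that $\wedge(\psi\otimes 1)$ is automatically a model for $Lf$; the fiber-direction bookkeeping is then absorbed into the explicit isomorphism $\xi\colon\HochschildModel\to\wedge(V\otimes\wedge(u)_*)$, $v\mapsto v\otimes 1^*$, $\bar v\mapsto(-1)^{|v|}v\otimes u^*$, whose compatibility with $\HochschildModel\psi$ is a direct check (the quotient by $\mathcal{I}$ encodes exactly the derivation property of $s$). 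In your approach, by contrast, the homotopy pushout must be presented by a cofibrant replacement of $\mu\colon\wedge V_2\otimes\wedge V_2\to\wedge V_2$, and the map of diagrams requires a \emph{lift} $\tilde\psi$ of $\psi\otimes\psi$ through the relative Sullivan models of $\mu$. Such a lift exists by the lifting lemma, but it is only determined up to homotopy, and since the differential on the path object involves inductively constructed nonlinear correction terms, a generic lift carries $\bar v$ to $s\psi(v)$ plus terms which, after pushing out along $\mu$, land in $\wedge V_1\otimes\wedge^{\geq 1}\overline{V_1}$ and need not vanish. So ``tracking it through the construction'' does not by itself yield $\HochschildModel\psi$ on the nose; it yields \emph{some} representative of $Lf$ agreeing with $\HochschildModel\psi$ on $\wedge V_2$, and your own closing observation — that morphisms of KS-extensions over a fixed $\psi$ are not unique up to homotopy — shows that this is not enough to conclude.

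The gap can be closed while staying close to your adjunction-flavored setup, by the technique the paper itself uses in the proof of \cref{lem:adjoint_alg}: since the exponential law gives a bijection $[LX_1, LX_2]\cong[LX_1\times S^1, X_2]$, the class of $Lf$ is characterized by its adjoint $f\circ\ev$. The standard model of the evaluation map, $\mu(\ev)(\omega)=\omega\otimes 1+(-1)^{\deg{\omega}-1}s\omega\otimes t$, is multiplicative on all of $\wedge V$ (because $s$ is a derivation and $t^2=0$), and one then checks the \emph{strict} identity $(\HochschildModel\psi\otimes 1)\circ\mu(\ev_2)=\mu(\ev_1)\circ\psi$ on generators: both sides send $v$ to $\psi(v)\otimes 1+(-1)^{\deg{v}-1}s\psi(v)\otimes t$. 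This pins down the homotopy class in the fiber direction with no choices, and replaces the unresolved path-object bookkeeping in your last paragraph. Alternatively, one can simply adopt the paper's Brown--Szczarba argument, where strict functoriality makes the issue disappear.
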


\begin{proof}
Let $\wedge (V_i \otimes \wedge (u)_*) \cong \wedge (\wedge V_i \otimes \wedge (u_*))/{\mathcal I}$ be the Brown--Szczarba model for $LX_i$; see Appendix \ref{app:ModelAd}.
A naturality of Brawn-Szczarba models implies that the induced morphism
\[
\wedge (\psi \otimes 1) : \wedge (\wedge V_2 \otimes \wedge (u_*))/{\mathcal I} \longrightarrow \wedge (\wedge V_2 \otimes \wedge (u_*))/{\mathcal I}
\]
is a rational model for $Lf$.
It is readily seen that the square
\[
\xymatrix{
\wedge (V_2 \otimes \wedge (u)_*)
\ar[d]_-{\rho}^-{\cong}
&
\HochschildModel_{X_2}
\ar[l]_-{\xi}^-{\cong}
\ar[r]^-{\HochschildModel \varphi}
&
\HochschildModel_{X_1}
\ar[r]^-{\xi}_-{\cong}
&
\wedge (V_1 \otimes \wedge (u)_*)
\ar[d]^-{\rho}_-{\cong}
\\
\wedge (\wedge V_2 \otimes \wedge (u_*))/{\mathcal I}
\ar[rrr]^-{\wedge (\psi \otimes 1)}
&&&
\wedge (\wedge V_1 \otimes \wedge (u_*))/{\mathcal I}
}
\]
is commutative, which proves the lemma.
\end{proof}
By the restriction of the isomorphism $\xi$ mentioned above to the direct summands of the Hodge decomposition, we see that $\HochschildModel_{(k)}$ is isomorphic to $\wedge V \otimes \wedge ^k (V\otimes \Q u^*)$ which is a direct summand of $\wedge (V\otimes \wedge (u)_*)$.
Define the morphism ${\mathcal M}_{\Gamma_1}$ of CDGAs by the composite
\begin{eqnarray}\label{eq:Gamma_1}
\xymatrix@C20pt@R20pt{
\ \ \ \ \ \ \HochschildModel
\ar[r]^-{\text{proj}}
&
\HochschildModel_{(1)}
\ar[r]_-{\cong}^-{\xi}
&
 \wedge V \otimes (V\otimes \Q u^*)
\ar[r]^-{\varphi \otimes 1}
&
A\otimes (V\otimes \Q u^*)
\ar[lld]_-{D_A \otimes 1}^-{\cong}
\\
&
A_* \otimes (V\otimes \Q u^*)
\ar[r]_-{\cong}^-{T}
&
V\otimes A_* \otimes \Q u^*
\ar[r]^-{\pr'' \otimes 1}
&
T_\varphi \otimes \Q u^* .
}
\end{eqnarray}
Here, the map $T$ is defined by $T(a\otimes v\otimes u^*) = (-1)^{|a||v|}v\otimes a \otimes u^*$ and  $\pr'' : V \otimes A_* \to T_\varphi$ denotes the canonical projection.
\begin{prop}\label{prop:ModelGamma1}
The morphism ${\mathcal M}_{\Gamma_1}$ is a rational model for the dual of $\Gamma_1$.
\end{prop}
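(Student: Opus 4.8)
The plan is to build a Sullivan representative for the map $g$ underlying $\Gamma_1$ and then to dualize the two homology operations $\mathrm{Hur}$ and $-\times[M]$ on the model level, matching the result with the composite \eqref{eq:Gamma_1}. First I would record that $g$ is the adjoint of $\tilde g\colon\Omega\aut M_0\times M\times S^1\to M$, $(\gamma,x,t)\mapsto\gamma(t)(x)$, and that $\tilde g=\mathrm{act}\circ(\ev\times\mathrm{id}_M)$, where $\ev\colon\Omega\aut M_0\times S^1\to\aut M$ is the loop evaluation and $\mathrm{act}\colon\aut M\times M\to M$ is the evaluation action. For $\ev$ I would reuse the representative ${\mathcal M}_{\ev}$ produced in the proof of \cref{prop:ModelConnHom}, and for $\mathrm{act}$ the Brown--Szczarba evaluation model of Appendix \ref{app:ModelAd}, which sends $v\mapsto\sum_i\overline{v\otimes a_i^*}\otimes a_i$ with $\{a_i\}$ a homogeneous basis of the Poincar\'e duality model $A$ and $\{a_i^*\}$ the dual basis.

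Composing $({\mathcal M}_{\ev}\otimes\mathrm{id}_A)$ with $\mathrm{act}^*$ and collecting the coefficient of $u$ (the adjoint/loop rule underlying \cref{lem:ModelLf}) then yields a model $g^*\colon\HochschildModel\to\wedge(T_\varphi\otimes\Q u^*)\otimes A$ with $g^*(\bar v)=\pm\sum_i(\pr'(\overline{v\otimes a_i^*})\otimes u^*)\otimes a_i$; moreover $g^*|_{\wedge V}$ is forced by $\ev_0\circ g=\mathrm{proj}_M$ to be $a\mapsto 1\otimes\varphi(a)$.

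Next I would dualize $\Gamma_1=g_*\circ(-\times[M])\circ\mathrm{Hur}$. A short cochain computation shows that its dual sends a cocycle $\omega$ to $(\pr\otimes\langle[M],-\rangle)(g^*\omega)$, where $\pr$ projects the $\Omega\aut M_0$-factor onto its linear part $T_\varphi\otimes\Q u^*$ (this is the $\nu$-dual of $\mathrm{Hur}$) and $\langle[M],-\rangle=\omega_A^*$ pairs the $A$-factor against the fundamental class. Applying this to $\omega=a\bar v$ and using that $g^*$ is an algebra map, so $g^*(a\bar v)=g^*(a)\,g^*(\bar v)$, gives $\pm\sum_i(\pr'(\overline{v\otimes a_i^*})\otimes u^*)\,\omega_A^*(\varphi(a)a_i)$. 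The crux is now the dual-basis resummation $\sum_i a_i^*\,\alpha(a_i)=\alpha$ in $A_*$: since $\omega_A^*(\varphi(a)a_i)=D_A(\varphi(a))(a_i)$, it collapses the sum to $\pm\,\pr''(v\otimes D_A(\varphi(a)))\otimes u^*$, where I also use that the canonical projection $\pr''\colon V\otimes A_*\to T_\varphi$ factors $\pr'$ through the quotient $V\otimes A_*\twoheadrightarrow S_\varphi$. This expression is exactly $(\pr''\otimes 1)\circ T\circ(D_A\otimes 1)\circ(\varphi\otimes 1)\circ\xi$ evaluated at $a\bar v$, so the dual of $\Gamma_1$ agrees with ${\mathcal M}_{\Gamma_1}$ on $\HochschildModel_{(1)}$, which is all that survives the initial projection.

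The hard part will be the first two steps: assembling the Brown--Szczarba model for $\mathrm{act}$ and pushing it through the adjoint construction, and then pinning down every Koszul sign --- through ${\mathcal M}_{\ev}$, the transposition $T$, and the pairing $\langle[M],-\rangle$ --- so that the ambiguous $\pm$ above is fixed consistently with the conventions of \cref{rem:Sign_ModelEv}.
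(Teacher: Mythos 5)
Your proposal is correct in substance, and it reaches the same endgame as the paper --- the same model \({\mathcal M}_g\) on generators (\(v\mapsto 1\otimes\varphi(v)\), \(\bar v\mapsto \pm\sum_i(\pr''(v\otimes a_i^*)\otimes u^*)\otimes a_i\)), followed by \(1\otimes\int_{\omega_A}\) and projection onto \(T_\varphi\otimes\Q u^*\) --- but it gets to \({\mathcal M}_g\) by a genuinely different route. The paper factors \(g\) as \(\ev'\circ(g'\times 1)\) with \(g'=({\ad'})^{-1}\circ\ad\circ\inc'\) through \(\Map(M,LM)\), which forces it to develop the Brown--Szczarba model of the general adjoint map (\cref{model:ad} in Appendix \ref{app:ModelAd}) and to invert \({\mathcal M}_{\ad'}\); you instead use the free-loop adjunction directly, \(g\leftrightarrow\tilde g=\mathrm{act}\circ(\ev\times\mathrm{id}_M)\), model \(\tilde g\) by composing the two evaluation models already at hand (\({\mathcal M}_{\ev}\) from the proof of \cref{prop:ModelConnHom} and the Brown--Szczarba evaluation model used in \cref{lem:PhiTheta}), and read off the \(\HochschildModel\)-model of \(g\) from the coefficients of \(1\) and \(u\). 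That coefficient-extraction device is not literally \cref{lem:ModelLf} (whose proof runs through naturality of Brown--Szczarba models), but it is exactly the uniqueness-of-adjoints argument the paper itself uses in \cref{lem:adjoint_alg}, so it is available in the paper's toolkit; to make your step airtight you must say why the extracted map is a well-defined CDGA map commuting with differentials (the \(u\)-coefficient of a CDGA map into \(-\otimes\wedge(u)\) is a \(\psi_0\)-derivation, which pins down its multiplicative extension on \(\overline V\)) and why the adjunction is injective on homotopy sets. What your route buys is economy --- no \({\mathcal M}_{\ad}\), no \({\mathcal M}_{\ad'}^{-1}\) --- and your dual-basis resummation \(\sum_i a_i^*\,\omega_A^*(\varphi(a)a_i)=D_A(\varphi(a))\) packages the paper's explicit \(\lambda_{(i_1,\dots,i_k,i)}\)-coefficient computation more cleanly. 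Two loose ends you rightly flag but must actually close: the signs (the paper's \((-1)^{|v|}\) in \({\mathcal M}_g(\bar v)\), the \((-1)^{|a||v|}\) from \(T\), plus the Koszul sign from the hidden swap \(M\times S^1\to S^1\times M\) in your factorization of \(\tilde g\)) have to come out on the nose, since a stray sign gives a map not homotopic to \({\mathcal M}_{\Gamma_1}\), and conventions must follow \cref{rem:Sign_ModelEv}; and the parenthetical ``all that survives the initial projection'' deserves one line --- \(g^*\) sends \(\HochschildModel_{(k)}\) into the \(k\)-th power of the ideal generated by \(T_\varphi\otimes\Q u^*\) (and \(\HochschildModel_{(0)}\) into \(\Q\otimes A\)), so the final projection onto the linear part kills every component with \(k\neq 1\), matching the paper's implicit reduction to \(\HochschildModel_{(1)}\).
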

\begin{proof}
We first consider the composite
\[
\xymatrix{
g' :  \Omega \aut M_0
\ar[r]^-{\inc'}
&
L \aut M
\ar[r]^-{\ad}
&
\Map(S^1 \times M , M)
&
\Map(M , LM),
\ar[l]_-{\ad'}^-{\cong}
}
\]
where $\ad$ and $\ad'$ are the adjoint maps.
By virtue of Lemma \ref{model:ad}, we see that the map $\ad$ is modeled by the morphism ${\mathcal M}_{\ad} : \wedge (V \otimes (\wedge (u)\otimes A)_*) \to \wedge (S_\varphi \otimes \wedge (u)_*)$ given by
\[
{\mathcal M}_{\ad} (v \otimes \alpha )
  =
\rho^{-1}\left(
{\mathcal M}_{\inc}(v\otimes \alpha_0)\otimes 1^* + (-1)^{|\alpha_1|}{\mathcal M}_{\inc}(v\otimes \alpha_1)\otimes u^*
\right)
\]
for $\alpha \in (\wedge (u) \otimes A)_*$ and $\zeta (\alpha) = 1^* \otimes \alpha_0 + u^* \otimes \alpha_1$.
Here, $\rho$ and $\zeta$ are the isomorphisms described in Appendix \ref{app:ModelAd}.

Since $M$ is simply-connected, it follows that $LM$ is connected.
Thus, the same argument as in the proof of Lemma \ref{model:ad} enables us to obtain a model for
$\ad'$ of the form
$
{\mathcal M}_{\ad'} : \wedge (V\otimes (\wedge (u) \otimes A)_*)   \to \wedge ((V\otimes \wedge (u)_*)\otimes A_*)
$
which is induced by the isomorphisms $\zeta : (\wedge (u) \otimes A)_* \cong \wedge (u)_* \otimes A_*$.
Therefore, the composite ${\mathcal M}_{\inc'}\circ {\mathcal M}_{\ad} \circ {\mathcal M}_{\ad'}^{-1}$
is a model for $g'$, where ${\mathcal M}_{\inc'}$ is the model of $\inc'$ described in the proof of Proposition \ref{prop:ModelConnHom}.
Let $\ev' : \Map (M, LM) \times M \to LM$ be the evaluation map. Then we have a model for $\ev'$ of the form  ${\mathcal M}_{\ev'} : \wedge (V\otimes \wedge (u)_*) \to \wedge ((V\otimes \wedge (u)_*)\otimes A_*) \otimes A$ defined by
\[
{\mathcal M}_{\ev'}(v\otimes \beta) = \sum (-1)^{|a_i|} ((v\otimes \beta)\otimes a_i^*) \otimes a_i.
\]
This follows from \cite[Theorem 1.1]{BM06} and \cite[Theorem 4.5]{Kuribayashi06}.
We remark that the sign of the model ${\mathcal M}_{\ev'}$ is different from the original model due to Buijs and Murillo.
For details of the sign, see Remark \ref{rem:Sign_ModelEv} after the proof.

Since the map $g$ coincides with the composite $\ev' \circ (g' \times 1)$, it follows that
\[
{\mathcal M}_g = ({\mathcal M}_{\inc'}\circ {\mathcal M}_{\ad} \circ {\mathcal M}_{\ad'}^{-1} \otimes 1) \circ {\mathcal M}_{\ev'}
\]
is a rational model for $g$.
Explicitly, we compute
\begin{align}
{\mathcal M}_g (v)
&= ({\mathcal M}_{\inc'}\circ {\mathcal M}_{\ad} \circ {\mathcal M}_{\ad'}^{-1} \otimes 1) \circ {\mathcal M}_{\ev'} (v\otimes 1^*)
\\
&=({\mathcal M}_{\inc'}\circ {\mathcal M}_{\ad} \otimes 1) \left( \sum_i (-1)^{|a_i|} (v\otimes (1 \otimes a_i )^*) \otimes a_i \right)
\\
&=({\mathcal M}_{\inc'} \otimes 1) \left( \sum_i (-1)^{|a_i|}   \rho^{-1}\left( {\mathcal M}_{\inc}(v\otimes a_i^*) \otimes 1^* \right) \otimes a_i \right)
\\
&= \sum_{|a_i|=|v|} a_i^* (\varphi(v))\otimes a_i \ \ \ \text{and}
\\
{\mathcal M}_g (\bar{v})
&= (-1)^{|v|} ({\mathcal M}_{\inc'}\circ {\mathcal M}_{\ad} \circ {\mathcal M}_{\ad'}^{-1} \otimes 1) \circ {\mathcal M}_{\ev'} (v\otimes u^*)
\\
&= (-1)^{|v|} ({\mathcal M}_{\inc'}\circ {\mathcal M}_{\ad} \otimes 1) \left( \sum_i (-1)^{|a_i|} ((v\otimes (u \otimes a_i )^*) \otimes a_i \right)
\\
&=(-1)^{|v|}({\mathcal M}_{\inc'} \otimes 1) \left( \sum_i    \rho^{-1}\left( {\mathcal M}_{\inc}(v\otimes a_i^*) \otimes u^* \right) \otimes a_i \right)
\\
&=(-1)^{|v|}  \sum_{|a_i| < |v|}   \left( \pr'' ( v\otimes a_i^*) \otimes u^* \right) \otimes a_i.
\end{align}

Recall $\omega_A \in A^m$ the representative of the fundamental class $[M]$ described above, and define $\int_{\omega_A} : A \to \Q$ a linear map of degree $-m$ which maps $\omega_A$ to $1$. It is immediate that $\int_{\omega_A}$ is a rational model of the dual of $\Q \to H_*(M;\Q)$ defined by $1\mapsto [M]$.
Therefore, the definition of $\Gamma_1$ implies that the composite 
\begin{equation}\label{model:Gamma1_2}
\xymatrix@C20pt{
\HochschildModel
\ar[r]_-{\cong}^-{\xi}
&
\wedge (V\otimes \wedge (u)_*)
\ar[r]^-{{\mathcal M}_g}
&
\wedge \left( T_\varphi \otimes \Q u^*   \right) \otimes A
\ar[r]^-{1\otimes \int_{\omega_A}}
&
\wedge \left( T_\varphi \otimes \Q u^*   \right)
\ar[r]^-{\text{proj}}
&
T_\varphi \otimes \Q u^*
}
\end{equation}
induces the dual of $\Gamma_1$ on homology.

In order to complete the proof, it suffices to show that  the composite above coincides with ${\mathcal M}_{\Gamma_1}$ in (\ref{eq:Gamma_1}) on
$\HochschildModel_{(1)}$. We observe that $\varphi(v)= \sum_i a_i^*(\varphi(v))a_i$ for $v \in V$.
Moreover, we may write $D_A(a_{i_1}\cdots a_{i_k})= \sum_j \lambda_{(i_1, \ldots , i_k , j)}a_j^*$ for some $\lambda_{(i_1, \ldots , i_k , j)} \in \Q$. Thus it follows that
$
\int_{\omega_A}(a_{i_1}\cdots a_{i_k}a_i)=\omega_A^*(a_{i_1}\cdots a_{i_k}a_i) = D_A(a_{i_1}\cdots a_{i_k})(a_i) = \lambda_{(i_1, \ldots , i_k , i)}.
$
Then, the definition of  ${\mathcal M}_{\Gamma_1}$ yields that
\begin{align}
& {\mathcal M}_{\Gamma_1}(v_1 \cdots v_k \bar{v}) \\
= & ( \pr'' \otimes 1) \circ T\circ (D_A\otimes 1)\\
&\hspace{8em} \Big((-1)^{|v|}\sum_{(i_1, \ldots , i_k)}  a_{i_1}^*(\varphi(v_1)) \cdots a_{i_k}^*(\varphi(v_k))  a_{i_1}\cdots a_{i_k}\otimes v\otimes u^*\Big)\\
= & ( \pr'' \otimes 1) \circ T \Big((-1)^{|v|}\sum_{(i_1, \ldots , i_k)}  a_{i_1}^*(\varphi(v_1)) \cdots a_{i_k}^*(\varphi(v_k)) (\sum_i \lambda_{(i_1, \ldots , i_k , i)}a_i^*) \otimes v\otimes u^*\Big)\\
=& (-1)^{|v|}  \sum_{(i_1, \ldots , i_k ,i)} (-1)^{|a_i||v|} a_{i_1}^*(\varphi(v_1)) \cdots a_{i_k}^*(\varphi(v_k)) \pr'' (v\otimes a_i^*)\otimes u^*) \lambda_{(i_1, \ldots , i_k , i)}.
\end{align}
Moreover, we see that
\begin{align}
& (\text{proj}) \circ \left( 1\otimes \textstyle\int_{\omega_A}  \right) \circ {\mathcal M}_g \circ \xi (v_1 \cdots v_k \bar{v})\\
=& (-1)^{|v|}(\text{proj}) \circ \left( 1\otimes \textstyle\int_{\omega_A}  \right) \circ {\mathcal M}_g \left( (v_1 \otimes 1^*) \cdots (v_k \otimes 1^*) (v\otimes u^*)  \right)
\\
=&(-1)^{|v|}(\text{proj}) \circ \left( 1\otimes \textstyle\int_{\omega_A}  \right) \\
 & \left\{ \left(\sum_{(i_1, \ldots , i_k)}  a_{i_1}^*(\varphi(v_1)) \cdots a_{i_k}^*(\varphi(v_k))  a_{i_1}\cdots a_{i_k}
 \right)
 \left( \sum_i (\pr'' (v\otimes a_i^*)\otimes u^*) \otimes a_i \right) \right\}
\\
= & (-1)^{|v|}  \sum_{(i_1, \ldots , i_k ,i)} (-1)^{(|a_i|-m)(|v|+|a_i|+1)+ m(|v|+|a_i|+1)} \\
&\hspace{10em} a_{i_1}^*(\varphi(v_1)) \cdots a_{i_k}^*(\varphi(v_k)) \pr'' (v\otimes a_i^*)\otimes u^*) \lambda_{(i_1, \ldots , i_k , i)}.
\end{align}
Observe that $\lambda_{(i_1, \ldots , i_k , i)} = 0$ if $m \neq |a_{i_1}\cdots a_{i_k}| + |a_i|$.
Thus, we have the result.
\end{proof}

\begin{rem}\label{rem:Sign_ModelEv}
The sign of the rational model ${\mathcal M}_{\ev'}$ for the evaluation map in the proof of Proposition \ref{prop:ModelGamma1} is different from the model due to Buijs-Murillo \cite{BM06} and Kuribayashi \cite{Kuribayashi06}.
It is caused by the difference of sings appeared in the differential of the dual space $A_*$.
The differential $d_*$ of $A_*$ in\cite{BS97,
BM06, Kuribayashi06} is defined by $d_*(\alpha) = \alpha \circ d_A$ for $\alpha \in A_*$ with the differential $d_A$ of $A$.
In this paper, we adopt the differential $d_*$ of $A_*$ defined by $d_*(\alpha) = -(-1)^{|\alpha|} \alpha \circ d_A$ with the Koszul sign convention.
\end{rem}

In \cite[\S 2]{BM08}, Buijs and Murillo define a  quasi-isomorphism
\[
\Psi : \Der (\SpaceModel , A) \to \Hom (S_\varphi, \Q )
\]
by
$
\Psi (\theta)(v\otimes \alpha) := (-1)^{(|\theta | + |v|)|\alpha|}\alpha \circ \theta (v)
$
for $\theta \in \Der (\wedge V , A)$ and $v\otimes \alpha \in S_\varphi$.
Then, the isomorphism $H(\Psi)$ on homology is related to Sullivan's isomorphism $\Phi$.

\begin{lem}\label{lem:PhiTheta} There exists a commutative diagram
  \begin{equation}\label{diag:Model1_aut}
    \xymatrix@C20pt@R20pt{
      \pi_* (\aut M) \otimes \Q
      \ar[rr]^-{\Phi}_-{\cong}
      \ar[d]_-{\nu}^-{\cong}
      &&
      H^{-*} (\Der (\wedge V))
      \ar[d]^-{\varphi_*}_-{\cong}
      \\
      \Hom (  H^*(S_\varphi ), \Q)
      &
      H^{-*}( \Hom (S_\varphi , \Q ) )
      \ar[l]_-{\cong}
      &
      H^{-*} (\Der (\wedge V, A)),
      \ar[l]_-{H(\Psi)}^-{\cong}
    }
  \end{equation}
  where the unnamed arrow denotes the natural isomorphism.
\end{lem}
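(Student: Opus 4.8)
The plan is to verify commutativity by evaluating both composites on a homotopy class $\theta\in\pi_n(\aut M)\otimes\Q$ and comparing the resulting functionals on the linear part $S_\varphi$. Every arrow in the diagram is already an isomorphism (for $\varphi_*$ this is \cref{cor:Der_quasi-iso}), and both a class in $H^{-n}(\Hom(S_\varphi,\Q))$ and an element of $\Hom(H^n(S_\varphi),\Q)$ are determined by their values on representatives $v\otimes\alpha\in S_\varphi\subseteq V\otimes A_*$. Writing $\theta'=\Phi(\theta)\in\Der(\wedge V)$, it therefore suffices to establish the scalar identity
\[
  \nu(\theta)(v\otimes\alpha)=(-1)^{(|\theta|+|v|)|\alpha|}\,\alpha\bigl(\varphi(\theta'(v))\bigr)
\]
up to the Koszul sign carried by the natural isomorphism $H^{-*}(\Hom(S_\varphi,\Q))\cong\Hom(H^*(S_\varphi),\Q)$ along the bottom row.

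First I would unwind the definition of $\Phi$. By construction $\theta'$ is extracted from a Sullivan representative of the adjoint $\ad(\theta)\colon S^n\times M\to M$ through the congruence $\mathcal{M}_{\ad(\theta)}(v)\equiv 1\otimes v+\iota\otimes\theta'(v)$ modulo $(\mathcal{M}_{S^n})^{>n}\otimes\wedge V$. The key geometric input is the factorization $\ad(\theta)=\ev\circ(\theta\times\id_M)$, where $\ev\colon\aut M\times M\to M$ is the evaluation. Using the Buijs--Murillo model $\mathcal{M}_{\ev}(v)=\sum_i(-1)^{|a_i|}(v\otimes a_i^*)\otimes a_i$ (with the sign convention of \cref{rem:Sign_ModelEv}) and a Sullivan representative $\mathcal{M}_\theta\colon\wedge S_\varphi\to\mathcal{M}_{S^n}$ of $\theta$, contravariance gives
\[
  \mathcal{M}_{\ad(\theta)}(v)=(\mathcal{M}_\theta\otimes\id_A)\,\mathcal{M}_{\ev}(v)=\sum_i(-1)^{|a_i|}\,\mathcal{M}_\theta(v\otimes a_i^*)\otimes a_i .
\]
Applying $\int_{S^n}$ to the $\mathcal{M}_{S^n}$-factor extracts the coefficient of the fundamental class, which by the very definition of $\nu$ equals $\nu(\theta)(v\otimes a_i^*)$.

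Comparing the $\iota$-components of the two expressions for $\mathcal{M}_{\ad(\theta)}(v)$ is the heart of the argument. Here the minimal-model computation of $\theta'$ takes place in $\mathcal{M}_{S^n}\otimes\wedge V$, whereas the evaluation model lands in $\mathcal{M}_{S^n}\otimes A$; pushing the former forward along $\id\otimes\varphi$ — which is exactly the role of $\varphi_*$ in the diagram — identifies the $\iota$-coefficients and yields $\varphi(\theta'(v))=\sum_i\nu(\theta)(v\otimes a_i^*)\,a_i$ on cohomology. Pairing with $\alpha=a_j^*$ and extending linearly then gives $\nu(\theta)(v\otimes\alpha)=\alpha(\varphi(\theta'(v)))$, which is precisely $\Psi(\varphi_*\theta')(v\otimes\alpha)$ up to the sign $(-1)^{(|\theta|+|v|)|\alpha|}$ built into the definition of $\Psi$. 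Since that sign is the Koszul sign implemented by the natural identification $H^{-*}(\Hom(S_\varphi,\Q))\cong\Hom(H^*(S_\varphi),\Q)$ over a field, the two composites agree on homology and the diagram commutes.

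The main obstacle I anticipate is the combination of sign bookkeeping with the careful passage between the $A$-valued and $\wedge V$-valued models: reconciling the minimal model $\wedge V$ used to define $\Phi$ with the Poincar\'e-duality model $A$ underlying the evaluation model is precisely what forces the factor $\varphi_*$ into the diagram, and one must check that this is compatible at the level of representatives, not merely on homology. I would also need to confirm that decomposable terms and higher-degree generators (in particular the class $u'$ of $\mathcal{M}_{S^n}$ when $n$ is even) do not contribute to the linear coefficient being extracted by $\int_{S^n}$, and that the universal-coefficients isomorphism along the bottom row carries exactly the sign appearing in $\Psi$, so that the final comparison is a genuine equality rather than an equality up to an uncontrolled scalar.
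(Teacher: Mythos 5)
Your proposal takes essentially the same route as the paper: both unwind $\Phi$ through a Sullivan representative of the adjoint $\ad(\theta)$, factor the adjoint through the evaluation map modeled by the Brown--Szczarba/Buijs--Murillo formula $\mathcal{M}_{\ev''}(v)=\sum_i(-1)^{|a_i|}(v\otimes a_i^*)\otimes a_i$ (the paper routes this through $\Map(M,M)$ and the inclusion model $\mathcal{M}_{\inc}$, which disposes of the degree-zero and decomposable terms you flag), and then identify $\int_{S^n}\circ\mathcal{M}_\theta|_{S_\varphi}$ with $\nu(\theta)$ to obtain $\Psi(\varphi_*\Phi(\theta))(v\otimes a_i^*)=\nu(\theta)(v\otimes a_i^*)$. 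The one residual point you leave open resolves even more simply than you anticipate: the sign $(-1)^{(|\theta|+|v|)|a_i|}$ in $\Psi$ combines with the $(-1)^{|a_i|}$ from the evaluation model to give an exponent congruent to $|a_i|(|a_i|+1)\equiv 0 \pmod 2$ whenever $\int_{S^n}\circ\mathcal{M}_\theta(v\otimes a_i^*)\neq 0$ (since then $|a_i|=|v|-n$), so the signs cancel outright and the bottom natural isomorphism needs no Koszul correction.
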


\begin{proof}
  Given $f \in \pi_n (\aut M) \otimes \Q$ which is represented by $f:S^n \to \aut M$. Here we assume that $M$ is a rational space.
  Let $ \ad (f) : S^n \times M \to M$ be the adjoint of $f$ and ${\mathcal M}_{\ad(f)}:\wedge V \to {\mathcal M}_{S^{n}}\otimes \wedge V$ a Sullivan representative for $\ad (f)$.
  Note that $(1\otimes \varphi) \circ {\mathcal M}_{\ad (f)}$ is also a Sullivan representative for $\ad (f)$.
  By the definition of $\Phi$, we have
  \begin{equation}\label{1}
    \varphi_* \circ \Phi (f)
    = \varphi_* \left\{ \left( \textstyle\int_{S^n} \otimes 1 \right) \circ {\mathcal M}_{\ad(f)} \right\}
    =\left( \textstyle\int_{S^n} \otimes 1 \right) \circ (1\otimes \varphi) \circ {\mathcal M}_{\ad(f)}
    .
  \end{equation}

  On the other hand, the adjoint $\ad (f)$ coincides with the composite
  \begin{equation}\label{ad(f)}
    \xymatrix{
      \ad (f) : S^n \times M
      \ar[r]^-{f\times 1}
      &
      \aut M \times M
      \ar[r]^-{\inc \times 1}
      &
      {\rm Map}(M,M)\times M
      \ar[r]^-{\ev''}
      &
      M,
    }
  \end{equation}
  where $\ev''$ is the evaluation map.
  Let ${\mathcal M}_{f} : \wedge S_\varphi \to {\mathcal M}_{S^n}$ be a Sullivan representative for $f$ and ${\mathcal M}_{\ev''} : \wedge V \to \wedge (V\otimes A_*) \otimes A$ the rational model for ${\ev''}$ defined by
\[
  {\mathcal M}_{\ev''}(v) = \sum_j (-1)^{|a_j|} (v\otimes a_j^*) \otimes a_j
\]
for $v\in V$.
Let ${\mathcal M}_{\inc}$ be the rational model for $\inc$ described in Appendix \ref{app:ModelAd}.
It follows from \eqref{ad(f)} that the composite  $({\mathcal M}_f\circ {\mathcal M}_{\inc} \otimes 1) \circ {\mathcal M}_{\ev''}$ of morphisms of CDGAs is also a rational model for $\ad (f)$ and then it is homotopic to $(1\otimes \varphi)\circ {\mathcal M}_{\ad(f)}$.
  Therefore, by \eqref{1}, we have
  \begin{align}
    \Psi \circ \varphi_* \circ \Phi (f) (v\otimes a_i^*)
    &=(-1)^{(|f|+|v|)|a_i|}a_i^* \left\{
      \left( (\textstyle\int_{S^n}  \circ{\mathcal M}_f\circ {\mathcal M}_{\inc} \otimes 1 \right) \circ {\mathcal M}_{\ev''}(v)\right\} \\
    &= (-1)^{(|f|+|v|)|a_i|}a_i^* \left\{  \sum_j (-1)^{|a_j|} \left( \textstyle\int_{S^n}\circ {\mathcal M}_f (v\otimes a_j^*) \right) a_j \right\} \\
    &= \textstyle\int_{S^n}\circ {\mathcal M}_f (v\otimes a_i^*) = \nu (f)(v\otimes a_i^*)
  \end{align}
  for $v\otimes a_i^* \in S_\varphi$.
\end{proof}

\begin{proof}[Proof of Theorem \ref{thm:Model2_aut}]
By making use of isomorphisms $\overline{V}\cong V\otimes \Q u^*$ and $S_\varphi \otimes \Q u^* \cong S_\varphi $ defined by $\bar{v} \mapsto v\otimes u^*$ and $w\otimes u^* \mapsto (-1)^{|w|}w$, respectively, we obtain morphisms $\lambda ' : \Der (\wedge V) \to \Hom (V\otimes \Q u^* , A)$ and $\Psi' : \Hom (V\otimes \Q u^* , A) \to \Hom (S_\varphi \otimes \Q u^* , \Q)$ of chain complexes which fit in the commutative diagram
\[
\xymatrix@C20pt@R20pt{
\Hom (\overline{V}, A)
&
\Der (\wedge V, A)
\ar[r]^-{\Psi}
\ar[d]^-{\lambda'}_-{\cong}
\ar[l]_-{\lambda}^-{\cong}
&
\Hom (S_\varphi, \Q)
\ar[d]^-{\cong}
\\
&
\Hom (V\otimes \Q u^*, A)
\ar[r]^-{\Psi'}
\ar[lu]^-{\cong}
&
\Hom (S_\varphi \otimes \Q u^*, \Q).
}
\]
Recall the morphisms $\PD$ in (\ref{eq:PD}) and ${\mathcal M}_{\Gamma_1}$ in (\ref{eq:Gamma_1}).
Then, a straightforward computation shows that the following diagram
\[
\xymatrix@C20pt@R20pt{
\Hom (S_\varphi , \Q)
\ar[rd]^-{\cong}
&&
\Der (\wedge V , A)
\ar[ll]_-{\Psi}
\ar[ddd]^-{\lambda}
\ar@/^20pt/[ldd]_-{\lambda'}
\\
\Hom (T_{\varphi} \otimes  \Q u^*, \Q)
\ar@/_85pt/[dd]_-{{\mathcal M}_{\Gamma_1}^*}
\ar[d]_-{(\pr'' \otimes 1)^*}
\ar[u]^-{\iota}
&
\Hom (S_\varphi \otimes \Q u^* , \Q)
\ar[ld]_-{(\pr \otimes 1)^*}
\\
\Hom (A_* \otimes V \otimes \Q u^* , \Q )
\ar[d]_-{ \left( (D_A \circ \varphi \otimes 1) \circ \xi \right)^*}
&
\Hom (V \otimes \Q u^* , A)
\ar[u]^-{\Psi '}
\ar[l]_-{\text{adj}}^-{\cong}
\ar[rd]^-{\cong}
\\
\Hom (\HochschildModel_{(1)} , \Q)
&&
\Hom (\overline{V} , A).
\ar[ll]_-{{\rm PD}}
}
\]
is commutative.
Therefore, by Proposition \ref{prop:ModelConnHom}, \ref{prop:ModelGamma1} and Lemma  \ref{lem:PhiTheta}, we have the commutativity of the diagram in the assertion.
\end{proof}


\section{Examples}\label{sect:examples}
In this section, we describe explicitly the Lie representation  $L$ and the contraction $e$ in Propositions \ref{prop:LieAlg-New} and
\ref{prop:Lie_e} for manifolds and interesting spaces.
For a Sullivan algebra $\wedge V$, we denote by $(v,\alpha)$ the derivation on $\wedge V$ that takes a generator $v$ in $V$ to an element $\alpha$ in $\wedge V$ and the other generators to $0$.

\begin{ex} \label{ex:G} Let $G$ be a
simply-connected compact Lie group and $\alpha_1, \ldots, \alpha_l$  the indecomposable elements of $H^*(G;\Q)$ with maximal degree.
  The cohomology $H^*(LG;\Q)$ is generated by $\alpha_1, \ldots,\alpha_l$ as an algebra with the BV operator and derivations $L_\theta$ for suitable elements $ \theta \in
  \pi_*(\aut G)\otimes \Q$. In this case, the Lie derivative $L : \pi_*(\aut G)\otimes\Q \to \Der_*(H^*(LG; \Q))$ is faithful.
\end{ex}

\begin{ex}\label{ex:CP^2}
Let $X$ be the complex projective plane $\C P^2$.  Then a Sullivan minimal model ${\mathcal M}_X$ for $X$ is  given by  $(\wedge (x,y),d)$ with $|x|=2$, $|y|=5$, $dx=0$ and $dy=x^3$. Moreover, the free loop space $LX$ admits the Sullivan minimal model $\HochschildModel = (\wedge (x,y,\bar{x},\bar{y}),d)$ for which $|\bar{x}|=1$, $|\bar{y}|=4$, $dx=0$, $dy=x^{3}$, $d\bar{x}=0$ and $d\bar{y}=-3x^2\bar{x}$.
Recall the result \cite[Theorem 2.2(ii)]{KY97} which asserts that  $$H^*(LX;\Q )\cong \frac{\Q[x]\otimes \wedge (\bar{x})}{(x^3,x^2\bar{x})}\oplus \Bigl((x,\bar{x})_A\otimes \Q^+[z]\Bigr)\ ; \  \ \ \ |z|=|\bar{y}|=4$$
as an algebra, where $(x,\bar{x})_A$ is the ideal of $A:= {\Q[x]\otimes \wedge (\bar{x})}/{(x^3,x^2\bar{x})}$. We choose a basis for $H^*(LX;\Q )$ of the form
$$\{ 1,\ x\ ,x^2,\ \bar{x}\ (=\alpha_0) \ , x\bar{x}, \ \alpha_n, \ x\alpha_n, \ \beta_n, x\beta_n \}_{n\geq 1},$$
where
$\alpha_n=\bar{x}\bar{y}^n$ and $\beta_n= {x}\bar{y}^n+3n\bar{x}y\bar{y}^{n-1}$.
Observe that $$H_*(\Der ({\mathcal M}_X))=\Q \{ (y,1),(y,x)\}.$$
Let $e_1:=e_{(y,1)}=-(\bar{y},1)$,  $e_2:=e_{(y,x)}=(\bar{y}, {x})$,
$L_1:=L_{(y,1)}=(y,1)$ and  $L_2:=L_{(y,x)}=(y,x)+(\bar{y}, \bar{x})$.
Then we see that
$e_1(\alpha_n)=n\alpha_{n-1}$,  $e_2(\alpha_n)=nx\alpha_{n-1}$, $e_1(\beta_1)=-x$, $e_1(\beta_n)=-n\beta_{n-1}$ $(n>1)$,  $e_2(\beta_1)=x^2$, $e_2(\beta_n)=nx\beta_{n-1}$ $(n>1)$,
$L_1(\alpha_n)=L_2(\alpha_n)=0$, $L_1(\beta_n)=-3n\alpha_{n-1}$ and  $L_2(\beta_n)=-2nx\alpha_{n-1}$. Thus $L$ and $e$ are injective.

Note that the calculations of the operations $L$ and $e$ yield that
$e_1(x\otimes z^n)=-n{x}\otimes z^{n-1}$,
$e_2(x\otimes z^n)=nx^2\otimes z^{n-1}$,
$e_1(\bar{x}\otimes z^n)=n\bar{x}\otimes z^{n-1}$,
$e_2(\bar{x}\otimes z^n)=nx\bar{x}\otimes z^{n-1}$,
$L_1(x\otimes z^n)=-3n\bar{x}\otimes z^{n-1}$,
$L_2(x\otimes z^n)=-2nx^2\otimes z^{n-1}$ and
$L_1(\bar{x}\otimes z^n)=L_2(\bar{x}\otimes z^n)=0$.
\end{ex}

\begin{rem}\label{rem:CP^2}
We see that $H_*(\Der(H^*(\C P^2;\Q))) =\Der_*(H^*(\C P^2;\Q)) =0$. In fact, with the same notation as in
\cref{ex:CP^2}, every derivation assigning an element in $H^0(\C P^2;\Q)=\Q$ to the generator $x$ should be trivial. As mentioned above,
the homology Lie algebra $H_*(\Der({\mathcal M}_{\C P^2}))$ is non trivial. Observe that $\C P^2$ is {\it formal}; see \cite[\S 12(c)]{FHT1}.
Thus, a quasi-isomorphism does not induce
an isomorphism between the homology Lie algebras of derivations in general.
\end{rem}

\begin{ex}
Let $X$ be a non-formal space whose minimal model ${\mathcal M}_X$ has the form
$(\wedge (x,y,z),d)$, where $dx=dy=0$, $dz=xy$,
$|x|=|y|=3$ and $|z|=5$. We observe that ${\mathcal M}_X$
 is realized by a manifold of dimension $11$; see  \cite[Theorem 13.2]{S}.
Then $H_*(\Der ({\mathcal M}_X))=\Q \{ (z,1)\}$
and $H^*(X;\Q )=\wedge (x,y)\otimes \Q [w,u]/(xy, xw, yu, xu+yw, w^2,wu,u^2)$
for $w=[xz]$ and $u=[yz]$.
Thus the natural map $\psi :H_*(\Der ({\mathcal M}_X))\to \Der_* (H^*(X;\Q ))$ is faithful
since $\psi {(z,1)}(w)=[x]$.
Moreover
$e$ and $L$ are injective since
$e_{(z,1)}([xyz\bar{z}])=[xy{z}]$ and  $L_{(z,1)}([xyz\bar{z}])=[xy\bar{z}]$.
\end{ex}

In the following examples,
we rely on the software {\it Kohomology} \cite{Wakatsuki2}
for determining bases for $H^*(X; \Q)$ and $H^*(LX; \Q)$ and
computing actions of \(L_\theta\) and \(e_\theta\) on \(H^*(LX; \Q)\)
with data of 
a Sullivan model for a given space $X$.

\begin{ex}
Let $X$ be a non-formal manifold of dimension $14$ whose minimal model
${\mathcal M}_X$ is of the form $(\wedge (a,x,y,b,v,w),d)$, where
$|a|=2$, $|x|=|y|=3$, $|b|=4$, $|v|=5$, $|w|=7$,
$da=dx=0$, $dy=a^2$, $db=ax$, $dv=ab+xy$, $dw=2xv+b^2$; see \cite[p.439]{FHT1} and \cite[Theorem 13.2]{S}.
Note that $H^*(X;\Q )$ is generated by $$\{ a, x, xb, av-yb, a^2w-abv+xyv, 3axw+b^3
\}$$
as an algebra.
Then $H_*(\Der ({\mathcal M}_X))=\Q \{ (w,1), (w,a)\}$ and
the natural map $\psi :H_*(\Der ({\mathcal M}_X))\to \Der_* ( H^*(X;\Q ))$ is zero though $\Der_* (H^*(X;\Q ))\neq 0$; see \cite{Yam05}.
However, the Lie representation $L:H_*(\Der ({\mathcal M}_X))\to \Der_* (H^*(LX;\Q ))$
is non-trivial.
 Indeed, we see that $L_{(w,1)}\neq 0$ and $L_{(w,a)}\neq 0$ since
\begin{align}
    &L_{(w,1)}(xbv\bar{w}-axw\bar{w})=2xv\bar{b}+ax\bar{w}  \ \ \text{and}\\
    &L_{(w,a)}(-axw\bar{w} +  xbv\bar{w} + xvw\bar{b} )=3 (\frac{1}{2} a^2x\bar{w} + axv\bar{b}) +(axb\bar{v} + axw\bar{a} + xyb\bar{b})
\end{align}
as non-zero cohomology classes \cite{Wakatsuki2}.
Thus it follows that the representation
$L:H_*(\Der ({\mathcal M}_X))\to \End_* ( H^*({\HochschildModel}_{(1)} ))$
is faithful. Here ${\HochschildModel}_{(k)}=\wedge V\otimes \wedge^k \overline{V}$ for ${\mathcal M}_X=(\wedge V,d)$ in Section \ref{sect:preliminaries}.

The contraction $e$ is injective as seen in  Corollary \ref{cor:e_faithful}. In this case, we can check the faithfulness with explicit calculations. Indeed, we see in  \cite{Wakatsuki2} that
\begin{align}
    &e_{(w,1)}(a^2xw\bar{w}-axbv\bar{w}-2axvw\bar{b})=a^2xw-xbva \ \ \text{and}\\
    &e_{(w,a)}(axw\bar{w}-xbv\bar{w}-2xvw\bar{b})=a^2xw-xbva,
\end{align}
where $[a^2xw-xbva]\in H^{14}(X;\Q)(=H^{14}( {\HochschildModel}_{(0)}))$ is the fundamental class of $X$.
\end{ex}

\begin{ex}\label{ex:355}
When $X$ does not have positive weights, $L:H_*(\Der ({\mathcal M}_X))\to \End_* (H^*({\HochschildModel}_{(1)} ))$
may not be faithful.
Let $X$ be an elliptic manifold of dimension $228$ with  $${\mathcal M}_X=(\wedge (x_1, x_2, y_1, y_2, y_3, z), d)$$
given in \cite[Example 5.2]{AL}, where
\(\deg{x_1} = 10\), \(\deg{x_2} = 12\), \(\deg{y_1} = 41\), \(\deg{y_2} = 43\), \(\deg{y_3} = 45\), \(\deg{z} = 119\) and
the differential is defined by
\begin{align}
  dx_1 &= 0 & dy_1 &= x_1^3 x_2
  & dz &= x_2(y_1 x_2 - x_1 y_2)(y_2 x_2 - x_1 y_3) + x_1^{12} + x_2^{10}. \\
  dx_2 &= 0 & dy_2 &= x_1^2 x_2^2 \\
       & & dy_3 &= x_1 x_2^3
\end{align}
Then we see that $L_{\theta}:H_*(\Der ({\mathcal M}_X))\to \End_* (H^*({\HochschildModel}_{(1)} ))$ is not zero except  $\theta=(z,x_2^9)$, $(z,x_1^2x_2)$ and  $(z,x_1x_2^2)$ \cite{Wakatsuki2}.
Observe that
 $L_{\theta}:H_*(\Der ({\mathcal M}_X))\to \End_* (H^*({\HochschildModel}_{(4)} ))$
for $\theta=(z,x_1^2x_2)$ and $(z,x_1x_2^2)$ is non trivial for elements of degree $221$ and $219$ of ${H^*({\HochschildModel}_{(4)} )}$, respectively. Unfortunately, a calculation of $L_{(z,x_2^9)}:
H_*(\Der ({\mathcal M}_X))\to \Der_* (H^{*}(LX;\Q))$ with Kohomology \cite{Wakatsuki2} shows that the representation is trivial for degrees less than or equal to $355$.
\end{ex} 

We do not know whether the operator $L$ is a faithful representation in general. In \cref{ex:355}, it is expected that $L_\theta$ is not zero for some derivation $\theta$ with higher degree.

\begin{prob}
Is $L:H_*(\Der ({\mathcal M}_X))\to \Der_* (H^*(LX;\Q ))$ faithful when $X$ is a closed manifold ?
\end{prob}

\section*{Acknowledgements}
The authors thank the referees for their careful reading and valuable comments.

\appendix

\section{A Sullivan representative for an adjoint map}
\label{app:ModelAd}

We begin by recalling the rational models due to Brown and Szczarba \cite{BS97}.
Let $\wedge V$ be a minimal Sullivan algebra, $A$ a finite dimensional CDGA and $A^q = 0$ for $q<0$.
We denote by $A_* = \Hom (A,\Q)$ the dual of $A$ with the coproduct $\Delta_A$ of $A_*$ induced by the multiplication of $A$; see \cref{rem:Sign_ModelEv}.
We consider the CDGA $\wedge (\wedge V \otimes A_*)$ and the differential ideal ${\mathcal I}$ of $\wedge (\wedge V \otimes A_*)$ generated by $1\otimes 1^* - 1$ and
\[
w_1 w_2 \otimes \alpha - \sum (-1)^{|w_2||\alpha'_i|}(w_1\otimes \alpha'_i)(w_2\otimes \alpha''_i),
\]
where $w_i \in \wedge V$, $\alpha \in A_*$ and $\Delta_A (\alpha) = \sum \alpha'_i \otimes \alpha''_i$.
Then, it follows from \cite[Theorem 3.5]{BS97} that the composite
\[
\xymatrix{
\rho :
\wedge (V\otimes A_*)
\ar[r]^-{\text{incl}}
&
\wedge (\wedge V\otimes A_*)
\ar[r]^-{\text{proj}}
&
\wedge (\wedge V\otimes A_*)/{\mathcal I}
}
\]
is an isomorphism of graded algebras. We define the differential $d_{BS}$ of $\wedge (V\otimes A_*)$ by $\rho^{-1}d\rho$, where $d$ is the differential of $\wedge (\wedge V\otimes A_*)/{\mathcal I}$.

Assume that $\wedge V$ is a minimal Sullivan model for a connected nilpotent space $Y$ of finite type and $A$ is a finite dimensional commutative model for a finite CW complex $X$.
Then, we see that $(\wedge (V\otimes A_*), d_{BS})$ is a rational model of $\Map(X,Y)$; see \cite[Theorem 1.3]{BS97}.

Let  $\varphi : \wedge V \to A$ be a Sullivan representative for a continuous map $f:X\to Y$. The morphism of CDGAs induces the augmentation $\varphi : \wedge (V\otimes A_*) \to \Q$ which is denoted by the same notation.
It follows from \cite[Proposition 4.2, Theorem 4.5]{BM06} and \cite[Remark 3.4]{HKO08} that the connected component $\Map_f (X,Y)$ of $\Map(X,Y)$ containing $f$ has a Sullivan model of the form
\[
(\wedge S_\varphi , d) = \left( \wedge \left( \overline{(V\otimes A_*)}^1 \oplus (V\otimes A_*)^{\geq 2} \right) , d \right),
\]
where $\overline{(V\otimes A_*)}^1$ is the complement of the image of the composite
\[
\xymatrix{
(V\otimes A_*)^0
\ar[r]^-{d}
&
\left( \wedge (V\otimes A_*) \right)^1
\ar@{->>}[r]
&
\left( \wedge (V\otimes A_*) / K_{\varphi} \right)^1
&
(V\otimes A_*)^1
\ar[l]^-{\cong}_-{\text{proj}}
}
\]
in which $K_\varphi$ is the differential ideal of $\wedge (V\otimes A_*)$ generated by $(V\otimes A_*)^{<0}$ and $\{ w - \varphi(w) \mid w\in (V\otimes A_*)^0 \}$. We observe that the differential $d$ is induced by the differential $d_{BS}$ of $\wedge (V\otimes A_*)$; see \cite[The proof of Proposition 4.2]{BM06}.
Moreover, the morphism ${\mathcal M}_{\inc} : \wedge (V\otimes A_*) \to \wedge S_\varphi$ of CDGAs defined by
  \[
    {\mathcal M}_{\inc}(w)
    = \left\{
      \begin{array}{ll}
        \pr (w) & (|w| >  0), \\
        \varphi(w) & ( |w|=0 ),\\
        0 & (|w|<0)
      \end{array}
    \right.
  \]
is a rational model for the inclusion $\inc : \Map_f (X,Y) \hookrightarrow \Map (X,Y)$, where $\pr  : (V\otimes A_*)^{\geq 1} \to S_\varphi$ is the canonical projection.

In what follows, let $X_i$ and $Y_i$ be connected nilpotent spaces of finite type for $i=1,2$.
We further assume that $X_i$ is a finite CW complex.
Moreover, let $A_i$ be a finite dimensional commutative model for $X_i$ and $\wedge V_i$ a minimal Sullivan model for $Y_i$.
We first construct a rational model for the map
\[
- \times g : \Map (X_1 , Y_1) \to \Map (X_1 \times X_2 , Y_1 \times Y_2)
\]
defined by $f \mapsto f \times g$ with a continuous map $g : X_2 \to Y_2$.
Let $\iota_i : A_i \hookrightarrow A_1 \otimes A_2$ denote the inclusion which is a rational model for the projection $\pr_i : X_1 \times X_2 \to X_i$.
Remark that $ \wedge V_1 \otimes \wedge V_2 \cong \wedge (V_1 \oplus V_2)$ is a Sullivan model for $Y_1 \times Y_2$; that is, the Brown-Szczarba model for $\Map(X_1 \times X_2, Y_1\times Y_2)$ is of the form $\wedge ( (V_1 \oplus V_2) \otimes (A_1 \otimes A_2)_* )$.
Let $\psi : \wedge V_2 \to A_2$ be a Sullivan representative for $g$.
Then, we define the CDGA morphism $\widehat{\psi}$ as the following composite;
\[
\xymatrix@C15pt@R15pt{
\wedge ( (V_1 \oplus V_2) \otimes (A_1 \otimes A_2)_* )
\ar[r]^-{\rho}_-{\cong}
&
\wedge (\wedge V_1 \otimes \wedge V_2 \otimes (A_1 \otimes A_2)_*)/{\mathcal I}
\ar[d]_-{\wedge (1\otimes \psi \otimes 1)}
\\
&
\wedge (\wedge V_1 \otimes A_2 \otimes (A_1 \otimes A_2)_*)/{\mathcal I}
\ar[d]_-{\widetilde{\eta}}
\\
\wedge (V_1 \otimes A_{1*})
&
\wedge (\wedge V_1 \otimes A_{1*})/{\mathcal I},
\ar[l]_-{\rho^{-1}}^-{\cong}
}
\]
where $\widetilde{\eta}$ is the CDGA morphism which is induced by the natural isomorphism $\zeta : (A_1\otimes A_2)_* \stackrel{\cong}{\rightarrow} A_{1*}\otimes A_{2*}$ and the paring $\eta : A_2 \otimes A_{2*} \to \Q$.

\begin{lem}\label{BSmodel:times}
The morphism $\widehat{\psi}$ is a rational model for $- \times g$.
\end{lem}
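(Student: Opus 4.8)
The plan is to use that $Y_1\times Y_2$ is a categorical product, which reduces the statement to two computations of $\widehat{\psi}$ on generators. Let $q_i\colon Y_1\times Y_2\to Y_i$ be the projections and $(q_i)_*$ the induced postcomposition maps on mapping spaces. Since $q_1\circ(f\times g)=f\circ\pr_1$ and $q_2\circ(f\times g)=g\circ\pr_2$, the map $-\times g$ is characterised as the unique map whose composite with $(q_1)_*$ is the precomposition $\pr_1^{*}\colon f\mapsto f\circ\pr_1$ and whose composite with $(q_2)_*$ is the constant map at $g\circ\pr_2$. On Brown--Szczarba models this product structure is visible: writing $B=\wedge((V_1\oplus V_2)\otimes(A_1\otimes A_2)_*)$, the splitting $(V_1\oplus V_2)\otimes(A_1\otimes A_2)_*=(V_1\otimes(A_1\otimes A_2)_*)\oplus(V_2\otimes(A_1\otimes A_2)_*)$ presents $B$ as the tensor product of the models of $\Map(X_1\times X_2,Y_1)$ and $\Map(X_1\times X_2,Y_2)$, with the models of $(q_1)_*$ and $(q_2)_*$ the two canonical inclusions. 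A CDGA map out of $B$ is therefore determined by its values on these two tensor factors, so it suffices to identify $\widehat{\psi}$ on the generators $v_1\otimes\alpha$ ($v_1\in V_1$) and $v_2\otimes\alpha$ ($v_2\in V_2$) with models of $\pr_1^{*}$ and of the constant map, respectively.

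First I would record the two target models. By naturality of the Brown--Szczarba construction in the source variable, with $\iota_1\colon A_1\hookrightarrow A_1\otimes A_2$ the model of $\pr_1$, the precomposition $\pr_1^{*}$ is modelled by $\wedge(1\otimes\iota_1^{*})\colon\wedge(V_1\otimes(A_1\otimes A_2)_*)\to\wedge(V_1\otimes A_{1*})$, where $\iota_1^{*}(\alpha)=\sum_k\alpha_k''(1_{A_2})\,\alpha_k'$ for $\zeta(\alpha)=\sum_k\alpha_k'\otimes\alpha_k''$. The constant map at the point $g\circ\pr_2$ is modelled by the augmentation $\varepsilon\colon\wedge(V_2\otimes(A_1\otimes A_2)_*)\to\Q$, $\varepsilon(v_2\otimes\alpha)=\alpha\big((\iota_2\psi)(v_2)\big)$, associated with this point, followed by the unit of $\wedge(V_1\otimes A_{1*})$; here $\iota_2$ models $\pr_2$ and $\psi$ is the chosen Sullivan representative of $g$.

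Next I would evaluate $\widehat{\psi}$ on generators directly from its definition. On $v_1\otimes\alpha$ the morphism $\wedge(1\otimes\psi\otimes1)$ fixes the $\wedge V_1$-slot and leaves the unit $1_{A_2}$ in the middle slot; applying $\widetilde{\eta}$ (splitting $\alpha$ by $\zeta$ and pairing the middle slot against the $A_{2*}$-part by $\eta$) then yields $v_1\otimes\iota_1^{*}(\alpha)$, matching $\pr_1^{*}$. On $v_2\otimes\alpha$ the morphism $\wedge(1\otimes\psi\otimes1)$ replaces $v_2$ by $\psi(v_2)\in A_2$ while the $\wedge V_1$-slot remains the unit $1_{\wedge V_1}$, and after $\widetilde{\eta}$ one is left with $\sum_k\alpha_k''(\psi(v_2))\,[1_{\wedge V_1}\otimes\alpha_k']$, where $\zeta(\alpha)=\sum_k\alpha_k'\otimes\alpha_k''$. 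The crux, which I expect to be the main obstacle, is the identity $[1_{\wedge V_1}\otimes\alpha']=\alpha'(1_{A_1})\cdot1$ in $\wedge(\wedge V_1\otimes A_{1*})/\mathcal{I}$. It encodes how the Brown--Szczarba relations collapse the unit component: taking $w_1=w_2=1_{\wedge V_1}$ in the defining relation of $\mathcal{I}$ shows that the assignment $\alpha'\mapsto[1_{\wedge V_1}\otimes\alpha']$ is unital and convolution-idempotent with respect to the coproduct $\Delta_{A_1}$ of $A_{1*}$; an induction on the coradical filtration of the connected coalgebra $A_{1*}$ then forces it to vanish whenever $\alpha'(1_{A_1})=0$, giving the claimed formula. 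Granting it, $\widehat{\psi}(v_2\otimes\alpha)=\big(\sum_k\alpha_k'(1_{A_1})\alpha_k''(\psi(v_2))\big)1=\alpha\big((\iota_2\psi)(v_2)\big)\cdot1=\varepsilon(v_2\otimes\alpha)\cdot1$, matching the constant map.

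Assembling the two cases and using that a map into a product of mapping spaces is detected by its two components, I conclude that $\widehat{\psi}$ represents the rational homotopy class of $-\times g$. The only remaining care is the bookkeeping of signs, which must follow the convention for the differential on the duals $A_{i*}$ fixed in Remark \ref{rem:Sign_ModelEv}; this is routine once the structural identification above is in place.
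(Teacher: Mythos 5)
Your proposal is correct and takes essentially the same route as the paper: the paper also decomposes $-\times g$ into its two components --- the precomposition $\pr_1^*$ and the constant map $c_g$, via the factorization \eqref{geom:times_g} --- models them by the dual maps $\iota_1^*, \iota_2^*$ and the Buijs--Murillo model for the constant map, and then matches the composite model with $\widehat{\psi}$ on generators using precisely your collapse identity, which the paper states as the observation that $1\otimes\widetilde{\alpha}_1$ vanishes in $\wedge(\wedge V\otimes A_{1*})/{\mathcal I}$ for $\widetilde{\alpha}_1\in (A_1^+)_*$. If anything, your write-up is slightly more complete at that one point, since your convolution-idempotence and coradical-filtration induction (valid because $A_1$ may be taken connected, so $A_{1}^0=\Q$) proves the identity the paper merely observes.
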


\begin{proof}
First, we see that the map $- \times g$ coincides with the composite
\begin{equation}\label{geom:times_g}
\xymatrix@C10pt@R20pt{
\Map (X_1 , Y_1)
\ar[r]^-{(1 , c_g)}
&
\Map (X_1 , Y_1)
\times \Map (X_2 , Y_2)
\ar[ld]_-{\pr_1^*  \times  \pr_2^*}
\\
\Map (X_1 \times X_2 , Y_1)
\times
\Map (X_1 \times X_2 , Y_2)
\ar[r]^-{\cong}
&
\Map (X_1 \times X_2 , Y_1 \times Y_2),
}
\end{equation}
where $c_g : \Map (X_1 , Y_1) \to \Map (X_2 , Y_2)$ is the constant map at $g$.
Let $\Map_g (X_2 , Y_2)$ be the connected component of $\Map (X_2 , Y_2)$ containing $g$.
We also see that $c_g$ is regarded as the composite
\[
\xymatrix{
\Map (X_1 , Y_1)
\ar[r]
&
 {\rm pt}
\ar[r]^-{c_g}
&
\Map_g (X_2 , Y_2)
\ar[r]^-{\inc}
&
\Map (X_2 , Y_2).
}
\]
It follows from the rational model for the inclusion $\Map_g (X_2 , Y_2) \hookrightarrow \Map (X_2 , Y_2)$ described in  \cite[Proposition 4.2, Theorem 4.5]{BM06} and \cite[Remark 3.4]{HKO08} that the morphism
\[
{\mathcal M}_{c_g} : \wedge (V_2 \otimes A_{2*}) \to \wedge (V_1 \otimes A_{1*})
\] defined by
$v_2 \otimes \alpha_2 \mapsto (-1)^{|v_2||\alpha_2|}\alpha_2 (\psi (v_2))$
for $v_2 \otimes \alpha_2 \in V_2 \otimes A_{2*}$ is a rational model of $c_g$.
We choose the inclusion $\iota_i$ into the $i$th factor as a model for the projection $\pr_i$ in the $i$th factor.  Then the description \eqref{geom:times_g}
and the naturality of the Brown-Szczarba models shows that the composite
\begin{equation}\label{model:times_g}
\xymatrix@C10pt@R15pt{
\wedge ( (V_1\oplus V_2)\otimes (A_1 \otimes A_2)_* )
\ar[r]^-{\cong}
&
\wedge ( V_1\otimes (A_1 \otimes A_2)_* ) \otimes \wedge ( V_2\otimes (A_1 \otimes A_2)_* )
\ar[ld]_-{\iota_1^* \otimes \iota_2^*}
\\
\wedge ( V_1\otimes A_{1*})  \otimes \wedge ( V_2 \otimes A_{2*} )
\ar[r]^-{ 1 \cdot {\mathcal M}_{c_g}}
&
\wedge ( V_1\otimes A_{1*})
}
\end{equation}
is a model for the map $- \times g$.
For any element $\alpha \in (A_1\otimes A_2)_*$, we may write
\[
\zeta (\alpha) =  \alpha \circ \iota_1 \otimes 1^* + 1^* \otimes \alpha \circ \iota_2 + \widetilde{\alpha}
\]
with $\widetilde{\alpha} \in (A_1^+)_* \otimes (A_{2}^+)_*$.
Observe that $1\otimes \widetilde{\alpha}_1$ is zero in $\wedge (\wedge V \otimes A_{1*})/{\mathcal I}$ for any $\widetilde{\alpha}_1 \in (A_1^+)_*$.
Therefore, we can check that the composite \eqref{model:times_g} coincides with $\widehat{\psi}$ and the proof is complete.
\end{proof}

We are ready to construct a rational model for the adjoint map
\begin{equation}\label{def:ad}
\ad : \Map (X_1, \Map_f (X_2 , Y)) \to \Map (X_1\times X_2, Y)
\end{equation}
defined by $\ad(g)(x_1, x_2) = g(x_1)(x_2)$
for $f:X_2 \to Y$, $g :X_1 \to \Map_f (X_2 , Y)$ and $x_i \in X_i$.
In order to apply the rational models due to Brown and Szczarba to our objects, we need to consider the connected component $\Map_f (X_2 , Y)$ containing $f$.

Let $\varphi$ be a Sullivan representative for $f$, $\varphi : \wedge (V\otimes A_{2*}) \to \Q$ the  augmentation induced by $\varphi$ and $\wedge S_\varphi$ the Brown-Szczarba model for $\Map_f (X_2 , Y)$ mentioned above.
Since $\Map_f (X_2 , Y)$ is a connected and nilpotent space of finite type; see \cite{HMR75}, by applying the construction of the  Brown-Szczarba model to $X_1$ and
$\Map_f (X_2 , Y)$, we have a model for
$\Map (X_1, \Map_f (X_2 , Y))$ of the form $\wedge (S_\varphi \otimes A_{1*})$.

\begin{prop}\label{model:ad}
The morphism ${\mathcal M}_{\ad} : \wedge (V\otimes (A_1 \otimes A_2)_*) \to \wedge ( S_\varphi \otimes A_{1*})$ of CDGAs defined by
\[
{\mathcal M}_{\ad} (v \otimes \alpha )
  =
  \sum (-1)^{|\alpha_1||\alpha_2|}\rho^{-1}\left( {\mathcal M}_{\inc}(v\otimes \alpha_2) \otimes \alpha_1 \right)
\]
is a rational model for the adjoint map $\ad$ in \eqref{def:ad}, where $v\in V$, $\alpha \in  (A_1 \otimes A_2)_*$ and $\zeta (\alpha)= \sum \alpha_1 \otimes \alpha_2$; see the paragraph before Lemma \ref{BSmodel:times} for the maps $\rho$ and $\zeta$.
\end{prop}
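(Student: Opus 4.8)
The plan is to characterize the adjoint map $\ad$ through a universal evaluation identity and then to verify that the explicit morphism ${\mathcal M}_{\ad}$ realizes it. Write $U = \Map(X_1, \Map_f(X_2, Y))$. As a continuous map, $\ad$ is determined by its own adjoint $\widetilde{\ad}\colon U \times X_1 \times X_2 \to Y$ relative to the exponential law for $\Map(X_1 \times X_2, Y)$, namely $\widetilde{\ad} = \ev_{X_1\times X_2}\circ(\ad\times 1)$. Since $\widetilde{\ad}(g, x_1, x_2) = g(x_1)(x_2)$, one checks at once that $\widetilde{\ad}$ also equals the double evaluation $\ev_{X_2}\circ(\ev_{X_1}\times 1_{X_2})$, where $\ev_{X_1}\colon U \times X_1 \to \Map_f(X_2, Y)$ and $\ev_{X_2}\colon \Map_f(X_2, Y)\times X_2 \to Y$ are the evaluation maps. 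I would then invoke the fact, realized on models by Buijs and Murillo \cite{BM06}, that composition with the evaluation model reflects homotopy: a CDGA morphism $\Phi\colon \wedge(V\otimes (A_1\otimes A_2)_*)\to \wedge(S_\varphi\otimes A_{1*})$ models $\ad$ exactly when $(\Phi\otimes 1)\circ {\mathcal M}_{\ev_{X_1\times X_2}}$ models $\widetilde{\ad}$. Thus it suffices to prove that $({\mathcal M}_{\ad}\otimes 1)\circ {\mathcal M}_{\ev_{X_1\times X_2}}$ is a model for $\ev_{X_2}\circ(\ev_{X_1}\times 1_{X_2})$.

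Next I would record the relevant evaluation models in the sign convention of \cref{rem:Sign_ModelEv}. Fix homogeneous bases $\{a_j\}$ of $A_2$ and $\{b_i\}$ of $A_1$ with dual bases $\{a_j^*\}$ and $\{b_i^*\}$. The Buijs--Murillo model $v\mapsto \sum(-1)^{|c|}(v\otimes c^*)\otimes c$ gives ${\mathcal M}_{\ev_{X_1\times X_2}}\colon \wedge V \to \wedge(V\otimes(A_1\otimes A_2)_*)\otimes(A_1\otimes A_2)$; the evaluation on the component $\Map_f(X_2, Y)$ is modeled by $\wedge V \to \wedge S_\varphi\otimes A_2$, $v\mapsto \sum_j(-1)^{|a_j|}{\mathcal M}_{\inc}(v\otimes a_j^*)\otimes a_j$, obtained by composing the full evaluation model with ${\mathcal M}_{\inc}\otimes 1$; and $\ev_{X_1}$ is modeled by $\wedge S_\varphi \to \wedge(S_\varphi\otimes A_{1*})\otimes A_1$, $w\mapsto \sum_i(-1)^{|b_i|}(w\otimes b_i^*)\otimes b_i$. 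Composing the last two produces the explicit model $({\mathcal M}_{\ev_{X_1}}\otimes 1_{A_2})\circ{\mathcal M}_{\ev_{X_2}}$ for $\widetilde{\ad}$ as a morphism $\wedge V \to \wedge(S_\varphi\otimes A_{1*})\otimes A_1\otimes A_2$.

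The heart of the argument is a direct comparison on a generator $v\in V$. Applying $({\mathcal M}_{\ad}\otimes 1)\circ{\mathcal M}_{\ev_{X_1\times X_2}}$ to $v$, the isomorphism $\zeta\colon (A_1\otimes A_2)_*\xrightarrow{\cong}A_{1*}\otimes A_{2*}$ splits each term $v\otimes(b_i\otimes a_j)^*$ into a factor ${\mathcal M}_{\inc}(v\otimes a_j^*)\otimes b_i^*$ in $\wedge(S_\varphi\otimes A_{1*})$, recovered through $\rho^{-1}$, tensored with $b_i\otimes a_j$; this is precisely the data produced by $({\mathcal M}_{\ev_{X_1}}\otimes 1_{A_2})\circ{\mathcal M}_{\ev_{X_2}}(v)$. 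I expect the main obstacle to be the bookkeeping of Koszul signs: one must check that the prefactor $(-1)^{|\alpha_1||\alpha_2|}$ built into ${\mathcal M}_{\ad}$ is exactly what reconciles the sign introduced by $\zeta$, the signs $(-1)^{|a_j|}$ and $(-1)^{|b_i|}$ of the two evaluation models, and the transpositions needed to bring both expressions into a common normal form in $\wedge(S_\varphi\otimes A_{1*})\otimes A_1\otimes A_2$. Once this identity of morphisms is verified—being algebra maps out of the free algebra $\wedge V$, both are forced by their values on generators—the two composites coincide; since the right-hand one is a composite of models for $\ev_{X_1}\times 1_{X_2}$ and $\ev_{X_2}$, it models $\widetilde{\ad}$, and the model-level adjunction of \cite{BM06, Kuribayashi06} then yields that ${\mathcal M}_{\ad}$ is a model for $\ad$.
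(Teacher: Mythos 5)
Your proposal is correct in substance but takes a genuinely different route from the paper's proof. The paper factors $\ad$ topologically as $\ev_*\circ(-\times \mathrm{id}_{X_2})$, models the postcomposition $\ev_*$ via functoriality of the Brown--Szczarba construction in the target --- which is what forces the surjective trick and the lifting lemma, to replace ${\mathcal M}_{\ev}\colon\wedge V\to\wedge S_\varphi\otimes A_2$ by a lift ${\mathcal M}'_{\ev}$ landing in the Sullivan algebra $\wedge S_\varphi\otimes\wedge W$ --- and then invokes \cref{BSmodel:times} for $-\times\mathrm{id}$ before collapsing the composite by an explicit computation. You instead characterize $\ad$ through its adjoint $\widetilde{\ad}=\ev_{X_2}\circ(\ev_{X_1}\times 1_{X_2})$ and reduce everything to the generator-level identity
\[
({\mathcal M}_{\ad}\otimes 1_{A_1\otimes A_2})\circ {\mathcal M}_{\ev_{X_1\times X_2}}
=({\mathcal M}_{\ev_{X_1}}\otimes 1_{A_2})\circ {\mathcal M}_{\ev_{X_2}},
\]
which does hold, and in fact on the nose rather than merely up to homotopy: with the Koszul convention $\zeta\bigl((b_i\otimes a_j)^*\bigr)=(-1)^{|b_i||a_j|}\,b_i^*\otimes a_j^*$, the prefactor $(-1)^{|\alpha_1||\alpha_2|}$ in ${\mathcal M}_{\ad}$ cancels exactly that sign, and both composites send $v$ to $\sum_{i,j}(-1)^{|b_i|+|a_j|}\rho^{-1}\bigl({\mathcal M}_{\inc}(v\otimes a_j^*)\otimes b_i^*\bigr)\otimes b_i\otimes a_j$; this check is essentially the computation that closes the paper's proof, transported across the adjunction. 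What your route buys is economy --- no \cref{BSmodel:times}, no surjective trick, no lifting lemma. What it costs is the one step you assert without proof: that composing with the evaluation model \emph{reflects} homotopy, i.e.\ that $\Phi\mapsto(\Phi\otimes 1)\circ{\mathcal M}_{\ev_{X_1\times X_2}}$ is injective on homotopy classes of CDGA maps. This is strictly stronger than \cite[Theorem 1.1]{BM06}, which only supplies the evaluation model, and it is delicate here because $\Map(X_1\times X_2,Y)$ is disconnected and the Brown--Szczarba algebra $\wedge(V\otimes(A_1\otimes A_2)_*)$ is non-connected with generators in degrees $\leq 0$, so the Sullivan--de Rham bijection $\mu$ is not available off the shelf; the paper runs an analogous uniqueness-of-adjoint argument in the proof of \cref{lem:adjoint_alg}, but only for honest nilpotent spaces where $\mu$ is a bijection. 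To make your argument airtight you should either cite the precise adjunction statement underlying the Brown--Szczarba correspondence (the model is built exactly so that maps into it classify adjoints) or, as the paper does, stay within the established functoriality statements and avoid the reflection principle altogether.
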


\begin{proof} The map $\ad$ fits in the commutative diagram
\[
\xymatrix@C3pt@R12pt{
\Map (X_1, \Map_f (X_2 , Y))
\ar[rd]^-{ - \times id}
\ar[rr]^-{\ad}
&
&
\Map (X_1 \times X_2 , Y)
\\
&
\Map (X_1 \times X_2 , \Map_f (X_2 , Y) \times X_2),
\ar[ru]^-{\ev_*}
&
}
\]
where $\ev : \Map_f (X_2 , Y) \times X_2 \to Y$ is the evaluation map.
The result  \cite[Theorem 1.1]{BM06} enables us to obtain a Sullivan model
\[
{\mathcal M}_{\ev} : \wedge V \to \wedge S_\varphi \otimes A_2
\]
for $\ev$ defined by
${\mathcal M}_{\ev}(v) = \sum_i (-1)^{|a_i|} {\mathcal M}_{\inc}(v\otimes a_i^*)\otimes a_i$,
where $\{ a_i \}$ is a basis of $A_2$ and $\{ a_i^* \}$ is the dual basis of $A_{2*}$.
By the surjective trick \cite[p.148]{FHT1}, there exist a Sullivan model $\wedge W$ for $X_2$ and a surjective Sullivan representative $\sigma : \wedge W \to A_2$ for the identity on $X_2$.
The lifting lemma \cite[Lemma 12.4]{FHT1} shows that there exists a morphism ${\mathcal M}'_{\ev} : \wedge V \to \wedge S_\varphi \otimes \wedge W$ such that $(1\otimes \sigma) \circ {\mathcal M}'_{\ev} = {\mathcal M}_{\ev}$.
Lemma \ref{BSmodel:times} is applicable  to the map $- \times id$. Thus we see that the composite
\begin{equation}\label{model:ad2}
\xymatrix@C10pt@R15pt{
\wedge (V \otimes (A_1\otimes A_2)_*)
\ar[r]^-{\rho}_-{\cong}
&
\wedge (\wedge V \otimes (A_1\otimes A_2)_*)/{\mathcal I}
\ar[d]^-{\wedge ({\mathcal M}'_{\ev} \otimes 1)}
\\
&
\wedge (\wedge S_\varphi \otimes \wedge W \otimes (A_1\otimes A_2)_*)/{\mathcal I}
\ar[d]^-{\rho^{-1}}_-{\cong}
\\
\wedge ( S_\varphi \otimes A_{1*})
&
\wedge ( (S_\varphi \oplus \wedge W) \otimes (A_1\otimes A_2)_*)
\ar[l]_-{\widehat{\psi}}
}
\end{equation}
is a model for $\ad$, where $\widehat{\psi}$ is the morphism of CDGAs defined in the paragraph before Lemma \ref{BSmodel:times}.
Explicitly, we compute
\begin{align}
&\widehat{\psi} \circ \rho^{-1} \circ \wedge ({\mathcal M}'_{\ev} \otimes 1) \circ \rho (v\otimes \alpha)
\\
=& \rho^{-1} \circ \widetilde{\eta} \circ \wedge ({\mathcal M}_{\ev} \otimes 1)(v\otimes \alpha)
\\
=& \rho^{-1} \circ \widetilde{\eta} \left( \sum_i (-1)^{|a_i|} {\mathcal M}_{\inc}(v\otimes a_i^*)\otimes a_i \otimes \alpha \right)
\\
=& \rho^{-1} \left( \sum_i \sum (-1)^{|a_i| + |\alpha_2|(|a_i| + |\alpha_1|)} {\mathcal M}_{\inc}(v\otimes a_i^*)\cdot \alpha_2(a_i)  \otimes \alpha_1  \right)
\\
=&  \sum (-1)^{|\alpha_1||\alpha_2|} \rho^{-1}\left( {\mathcal M}_{\inc}(v\otimes \alpha_2)  \otimes \alpha_1 \right).
\end{align}
Therefore, the model \eqref{model:ad2} for the adjoint map $\ad$ is nothing but the morphism ${\mathcal M}_{\ad}$ mentioned in the assertion.
\end{proof}

\section{An equivariant version of the Lie derivative $L$}\label{app:bar{L}}

\subsection{A geometric construction of a Lie derivative}\label{subsect:geom_bar{L}}
In this section, we assume
that the underlying field is of arbitrary characteristic. 

We discuss an equivariant cohomology (cyclic homology) version of the Lie derivative  $L$ that we consider in \cref{sect:geom_L_e}.
We begin by recalling a morphism of Lie algebras related to the Hochschild homology and the cyclic homology of an algebra.
Let $A$ be an unital algebra over a commutative ring $k$. For a derivation $D$ on $A$, we define a map $L_D$ on the Hochschild complex $C_*(A)$ by
\[
L_D(a_0, ..., a_n) = \sum_{i\geq 0} (a_0, .., a_{i-1}, Da_i, a_{i+1}, ..., a_n).
\]
We also recall the Hochschild cohomology $HH^*(A, A)$ of $A$. In particular, the first cohomology $HH^1(A, A)$ is
isomorphic to $\Der(A) /\{\text{inner derivations} \}$ as a $k$-module.
Then, we have

\begin{prop}\label{prop:Loday'sDer} {\em (\cite[4.1.6 Corollary]{Loday})}
  There are well-defined homomorphisms of Lie algebras $[D] \mapsto L_D$:
  \[
    HH^1(A, A) \to \End_k(HH_n(A)) \ \  \text{and} \ \ \ HH^1(A, A) \to \End_k(HC_n(A)).
  \]
\end{prop}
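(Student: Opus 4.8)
The plan is to establish three facts: that each $L_D$ induces a degree-zero endomorphism of $HH_n(A)$ and of $HC_n(A)$; that the induced endomorphism depends only on the class of $D$ in $HH^1(A,A)=\Der(A)/\mathrm{Inn}(A)$; and that the assignment respects brackets.

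First I would verify that $L_D$ is an endomorphism of the mixed complex $(C_*(A), b, B)$. The relation $[b, L_D]=0$ is a direct consequence of the Leibniz rule: whenever a face map of $b$ multiplies two adjacent entries, the two summands $(Da_i)a_{i+1}$ and $a_i(Da_{i+1})$ produced by $L_D$ reassemble into $D(a_ia_{i+1})$, and the wrap-around face behaves identically. For $[B, L_D]=0$ one uses $B=s(1+t_n+\cdots+t_n^n)$ together with $D(1)=0$ (from $D(1)=D(1\cdot 1)=2D(1)$), the fact that $L_D$ commutes with the cyclic operator $t_n$ because $D$ is applied symmetrically to every slot, and $L_D s=sL_D$. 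Being a degree-zero morphism of the mixed complex, $L_D$ then induces endomorphisms of $HH_n(A)=H_n(C_*(A),b)$ and of $HC_n(A)$ computed from the $(b,B)$-bicomplex.

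The Lie-algebra property holds already at the chain level. In $L_DL_{D'}-L_{D'}L_D$ the terms in which $D$ and $D'$ act on distinct tensor slots cancel in pairs, while the diagonal terms combine into $\sum_i(a_0,\ldots,(DD'-D'D)a_i,\ldots,a_n)=L_{[D,D']}$; since $L_D$ has homological degree zero no signs intervene. Thus $[L_D,L_{D'}]=L_{[D,D']}$ on $C_*(A)$, and this descends to $HH_n$ and $HC_n$. As the bracket on $HH^1(A,A)$ is induced by the commutator of derivations, both assignments are morphisms of Lie algebras once they are known to factor through $HH^1$.

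The crux, and the main obstacle, is the vanishing of $L_D$ on homology when $D=\ad_a$ is inner. For Hochschild homology I would exhibit the explicit homotopy $H\colon C_n(A)\to C_{n+1}(A)$ inserting the element $a$ into each slot and check $bH+Hb=\pm L_{\ad_a}$ directly; conceptually this is the identity $L_{\ad_a}=L_{\delta a}=\pm[b,L_a]$, which encodes $\ad_a=[\mu,a]_G$ for the multiplication $2$-cochain $\mu$ together with $L_\mu=\pm b$ in the Gerstenhaber calculus. Either way $L_{\ad_a}$ is a graded commutator with $b$ and so acts as $0$ on $HH_n(A)$. The delicate point is to transport this vanishing to cyclic homology: there one must produce a null-homotopy compatible with the entire mixed structure, not merely with $b$, since the insertion operator $L_a$ does not commute with $B$ on the nose. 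This is exactly Goodwillie's theorem that inner derivations act trivially on $HC_*$, and I would prove it by refining $L_a$ by a $B$-correction term so that $L_{\ad_a}$ becomes null-homotopic on the total complex of the $(b,B)$-bicomplex. Granting this, both maps descend to $HH^1(A,A)$, and combined with the chain-level bracket identity this yields the two Lie-algebra homomorphisms, as in \cite[4.1.6]{Loday}.
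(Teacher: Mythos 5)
The paper offers no proof of this proposition at all: it is quoted directly from Loday (4.1.6 Corollary), and your proposal reconstructs precisely the argument of that cited source --- commutation of $L_D$ with both $b$ and $B$ (using $D(1)=0$, $L_Dt_n=t_nL_D$ and $L_Ds=sL_D$), the chain-level identity $[L_D,L_{D'}]=L_{[D,D']}$, the insertion homotopy killing $L_{\mathrm{ad}_a}$ on Hochschild homology, and the Rinehart--Goodwillie $B$-corrected null-homotopy on the $(b,B)$-bicomplex for the cyclic case. Your proof is correct and takes essentially the same route as the proof the paper points to, with the one genuinely delicate step (the cyclic vanishing for inner derivations, Loday's 4.1.4--4.1.5) correctly identified as the crux, even though you only sketch the correction term rather than writing it out.
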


In the body of this manuscript, we discuss a geometric description of the Lie derivative  on the endomorphism algebra of the Hochschild homology of a DGA. The above result motivates us to consider its cyclic version. In this section, we deal with the topics. As a consequence, our main theorem, \cref{thm:LieAlg-New} below is obtained.

We work on the category of compactly generated spaces \cite{Steenrod}
or the category $\mathsf{NG}$ of numerically generated spaces, which is obtained by adjoint functors between the category of topological spaces and that of diffeological spaces; see \cite{SYH}. Thus, we can consider a space in such a Cartesian closed category without changing the weak homotopy type. Observe that the category $\mathsf{NG}$ is also complete and cocomplete.



Let $X$ be a simply-connected space of finite type and $\aut X$ the monoid of self-homotopy equivalences on $X$. We recall that
the homotopy group $\pi_*(\aut X)$ is a Lie algebra with the Samelson product; see \cite[Chapter III]{Whitehead}.
For an element $\theta$ in the homotopy group $\pi_n(\aut X)$ for $n > 1$,
we define a map $u_\theta$ by the composite
\[
  \xymatrix@C18pt@R20pt{
    u_\theta := L( \ ) \circ inc \circ \theta : S^n \ar[r] & \aut X \ar[r] & \Map(X, X) \ar[r]^-{L} & \Map(LX, LX),
  }
\]
where $inc$ denotes the inclusion and $L$ is the map which assigns $Lf : LX \to LX$ defined by ${Lf}(l) = f\circ l$ to a map $f :  X \to X$.
Then, the adjoint map $ad(u_\theta) : S^n \times LX \to LX$ gives rise to the derivation
\[
  \xymatrix@C20pt@R20pt{
    L'_\theta : H^*(LX) \ar[r]^-{(ad(u_\theta))^*} & H^*(S^n)\otimes H^*(LX)  \ar[r]^-{\int_{S^n}} &H^{*-n}(LX)
  }
\]
on the cohomology $H^*(LX)$, where $\int_{S^n}$ denotes the integration along the fiber.
The map $L_\theta$ in (\ref{defn:L_geom}) is regarded as the composite $\int_{S^n}\circ (s\times 1)^*\circ L(ad(\theta))^*$, where $s : S^n \to LS^n$ is the section of the evaluation map $\ev_0$ defined by $s(x)(t) = x$ for $x \in S^n$ and $t \in S^1$. Since $ad(u_\theta) = L(ad(\theta))\circ (s\times 1)$, it follows that $L'_\theta$ coincides with $L_\theta$ in (\ref{defn:L_geom}). In what follows, we may write $L_\theta$ for $L'_\theta$.


Observe that the adjoint map $ad(u_\theta) : S^n \times LX \to LX$ is an $S^1$-equivariant map, where the $S^1$-action on  $S^n$ is defined to be trivial. Thus, we have a map
$\overline{ad(u_\theta)\times_{S^1}1} : (S^n\times LX)\times_{S^1}ES^1 \to LX\times_{S^1}ES^1$ between the Borel constructions.
Therefore, the same construction as that of
$L_\theta$ with the integration enables us to obtain a derivation
\[
  \xymatrix@C20pt@R20pt{
    \overline{L}_\theta : H^*_{S^1}(LX) \ar[r] &H^{*-n}_{S^1}(LX)
  }
\]
of degree $-n$.

The assertion below describes geometric counterparts of the morphisms of Lie algebras described in \cref{prop:Loday'sDer}.

\begin{thm}\label{thm:LieAlg} The map
    $\overline{L}_{( \ )} : \pi_*(\aut X) \to \Der_{*}(H^*_{S^1}(LX))$
  is a morphism of Lie algebras.
\end{thm}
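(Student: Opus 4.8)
The plan is to factor $\overline{L}_{(\ )}$ as a homomorphism of group-like topological monoids followed by a universal ``infinitesimal action'' map, reducing the Lie-algebra claim to a single cochain-level computation. First I would note that $f\mapsto Lf$ is a homomorphism of topological monoids $\aut X\to\operatorname{aut}_1^{S^1}(LX)$ into the $S^1$-equivariant self-equivalences of $LX$, since $L(f\circ g)=Lf\circ Lg$ and $Lf$ commutes with the rotation action. Because the Borel construction is functorial and product-preserving, it yields a further monoid homomorphism $\operatorname{aut}_1^{S^1}(LX)\to\aut W$, where $W=LX\times_{S^1}ES^1$; let $G\colon\aut X\to\aut W$ be the composite. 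Using that $S^1$ acts trivially on $S^n$, so that $(S^n\times LX)\times_{S^1}ES^1\cong S^n\times W$, one checks from the definitions that $\overline{L}_\theta=D_{G_*(\theta)}$, where for a space $Z$ and $\eta\in\pi_n(\aut Z)$ the operator $D_\eta\in\End^{-n}(H^*(Z))$ is $\int_{S^n}\circ\,\ad(\eta)^*$. Since $\pi_*$ of a homomorphism of group-like monoids is natural for the Samelson product, $G_*$ is a morphism of graded Lie algebras, and the theorem reduces to the universal assertion that $D\colon\pi_*(\aut Z)\to\Der_*(H^*(Z))$ is a morphism of graded Lie algebras, the geometric counterpart of \cref{prop:Loday'sDer}.

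For the universal assertion I would first dispatch the formal properties of $D_\eta$. Since $\eta$ is based at $\mathrm{id}_Z$, the adjoint $\ad(\eta)$ restricts to the identity on $\{\ast\}\times Z$, so $\ad(\eta)^*$ is a ring homomorphism equal to the identity modulo $\tilde H^*(S^n)$; as $\tilde H^*(S^n)$ is concentrated in degree $n$, I may write $\ad(\eta)^*x=1\otimes x+\sigma_n\otimes D_\eta x$ with $\sigma_n$ a generator of $H^n(S^n)$. Multiplicativity of $\ad(\eta)^*$ together with $\sigma_n^2=0$ then forces the Leibniz rule for $D_\eta$, so $D_\eta$ is a derivation, and linearity of $\eta\mapsto D_\eta$ (in particular $D_{-\eta}=-D_\eta$) follows from additivity of $\int_{S^n}$ under the pinch map $S^n\to S^n\vee S^n$.

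The heart of the proof is bracket compatibility, $D_{\langle\theta,\rho\rangle}=[D_\theta,D_\rho]$ for $\theta\in\pi_p(\aut Z)$ and $\rho\in\pi_q(\aut Z)$. I would represent the Samelson product by the commutator in the group-like monoid $\aut Z$, so that after precomposing with the degree-one collapse $S^p\times S^q\to S^{p+q}$ one has $D_{\langle\theta,\rho\rangle}=\int_{S^p\times S^q}\Gamma^*$, where $\Gamma\colon S^p\times S^q\times Z\to Z$ is the adjoint of the commutator, $\Gamma(s,t,z)=\theta(s)\rho(t)\theta(s)^{-1}\rho(t)^{-1}(z)$ with $\theta(s)^{-1}$ and $\rho(t)^{-1}$ homotopy inverses. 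To expand $\Gamma^*$ cleanly I would replace the two reused sphere coordinates by independent copies, exhibiting $\Gamma$ as a four-fold composite of based parametrized self-maps precomposed with a diagonal; then $\Gamma^*$ becomes the ordered product of the factors $1\otimes(\ )+\sigma\otimes D_{(\ )}$ followed by the diagonal. Because $\sigma_p^2=\sigma_q^2=0$, only the terms with exactly one active $\theta$-factor and one active $\rho$-factor survive; using $D_{\theta^{-1}}=-D_\theta$, $D_{\rho^{-1}}=-D_\rho$, and the rule that the more inner factor acts on the left, the four surviving terms combine, upon reading off the coefficient of $\sigma_p\otimes\sigma_q$, to the graded commutator $[D_\theta,D_\rho]$.

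The principal obstacle is not conceptual but the Koszul sign bookkeeping: one must pin down the signs in $\ad(\eta)^*x=1\otimes x+\sigma_n\otimes D_\eta x$, in the collapse $S^p\times S^q\to S^{p+q}$ and the fibre integrations, and in commuting $\sigma_p$ past $\sigma_q$ and past cohomology classes, and then check that the outcome matches the convention $[\phi,\psi]=\phi\psi-(-1)^{|\phi||\psi|}\psi\phi$ on $\Der_*$ and the chosen sign for the Samelson product. A secondary point, routine in the convenient category of \cite{Steenrod} or \cite{SYH}, is justifying that the commutator map factors through the diagonal as the stated four-fold composite and that the collapse induces an isomorphism on top cohomology; once the parametrized self-maps are organized functorially, both are standard.
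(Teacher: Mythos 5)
Your proposal is correct, and at its computational core it is the same argument the paper uses: the paper proves \cref{thm:LieAlg} simultaneously with \cref{thm:LieAlgL} by transplanting the strategy of \cite{FLS} --- additivity of $\overline{L}_{( \ )}$ via the pinch map $S^n\to S^n\vee S^n$, the identity $\overline{L}_{\overline{\theta}}=-\overline{L}_{\theta}$, and bracket compatibility via the commutator representative $\gamma(x,y)=\theta(x)\circ\theta'(y)\circ\overline{\theta}(x)\circ\overline{\theta'}(y)$ of the Samelson product, expanded through the diagonal--transposition--four-fold-composite diagram --- which is exactly the skeleton of your ``universal assertion''. Where you genuinely differ is in how the equivariance is packaged. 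The paper never leaves the equivariant setting: every map in its FLS-style diagrams carries the decoration $\times_{S^1}1_{ES^1}$ (their $\ad(L_*(\theta))^{\sigma}$), and the commutativity and sign computations are cited from \cite{FLS} rather than redone. You instead factor $\overline{L}_{(\ )}=D\circ G_*$, where $G\colon \aut X\to\aut W$ with $W=LX\times_{S^1}ES^1$ is the composite of the monoid homomorphism $f\mapsto Lf$ with the Borel construction, use $(S^n\times LX)\times_{S^1}ES^1\cong S^n\times W$ (trivial $S^1$-action on $S^n$) to identify $\overline{L}_\theta$ with $D_{G_*(\theta)}$, and reduce, via naturality of Samelson products under homomorphisms of group-like monoids, to the single non-equivariant statement that $D_\eta=\int_{S^n}\circ\,\ad(\eta)^*$ defines a Lie morphism $\pi_*(\aut Z)\to\Der_*(H^*(Z))$ for any $Z$. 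This modularization buys you automatic well-definedness on homotopy classes (the paper argues by hand that homotopic representatives yield $S^1$-equivariantly homotopic adjoints), an explicit Leibniz rule from multiplicativity of $\ad(\eta)^*$ together with $\sigma_n^2=0$ (the paper asserts the derivation property without proof), and \cref{thm:LieAlgL} \cref{item:LieAlgL-i} for free as the case $Z=LX$ with $G$ replaced by $L$, matching the paper's own remark that the same argument handles $L_{( \ )}$. The cost is that you must carry out the Koszul-sign bookkeeping that the paper outsources to \cite{FLS}; you locate its sources correctly (the order reversal in $\Gamma^*$, the identities $D_{\eta^{-1}}=-D_\eta$, transposing $\sigma_p$ past $\sigma_q$ and past operators of degrees $-p$, $-q$), and the four surviving terms do assemble into the graded commutator, so this is verification rather than a gap. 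Two points you should make explicit when writing it up: the K\"unneth identification $H^*(S^n\times Z)\cong H^*(S^n)\otimes H^*(Z)$ over field coefficients (valid since $S^n$ is finite) underlying the expansion $\ad(\eta)^*x=1\otimes x+\sigma_n\otimes D_\eta x$, and the paper's standing restriction $n>1$ in the definition of $u_\theta$, which your universal statement should inherit.
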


\begin{proof}[Proofs of Theorems \ref{thm:LieAlgL} and \ref{thm:LieAlg}]
As mentioned above, the map $ad(u_\theta)$ is an $S^1$-equivariant map. Then, the operation $L'_\theta$ commutes with the BV operator. We have  Theorem \ref{thm:LieAlgL} \cref{item:LieAlgL-ii}.

In order to prove Theorem \ref{thm:LieAlgL} \cref{item:LieAlgL-i} and \ref{thm:LieAlg},
  we first recall that
  \[
    L_\theta = \int_{S^n} \circ H^*(ad(L_*(\theta))) \ \text{ and}  \ \
    \overline{L}_\theta = \int_{S^n} \circ H^*(ad(L_*(\theta))\times_{S^1} 1_{ES^1})
  \]
  for $\theta \in \pi_n(\aut X)$. We may write $ad(L_*(\theta))^\sigma$ for
  $ad(L_*(\theta))\times_{S^1} 1_{ES^1}$.
  The map $L$  mentioned above induces a homomorphism
  $L_* : \pi_*(\aut X) \to \pi_*({\rm aut}_1 (LX))$.  Let $\theta_1$ and $\theta_2$ be
  homotopic maps which represent an element in $\pi_n(\aut X)$. Then, we see that the maps
  $ad(L_*(\theta_1))$ and $ad(L_*(\theta_2))$ from $S^n \times LX$ to $LX$ are homotopic with an $S^1$-equivariant homotopy. This implies that ${L}_\theta$  and $\overline{L}_\theta$ are well defined.
  In what follows, we prove that $\overline{L}_{( \ )}$ is a morphism of Lie algebras. The same argument as that for $\overline{L}_{( \ )}$ is applicable to showing the result on $L_{( \ )}$.
  As a consequence, we have Theorem \ref{thm:LieAlgL} \cref{item:LieAlgL-i}.

  We apply the same strategy as that for \cite[Lemma 4.1 and Theorems 3.6, 4.2 and 4.3]{FLS}.
  In order to prove that $\overline{L}_{( \ )}$ is a homomorphism, we consider a diagram
  \[
    \xymatrix@C100pt@R15pt{
      (S^n \vee S^n) \times LX\times_{S^1} ES^1 \ar[r]^-{(ad(L_*(\theta))^\sigma\mid ad(L_*(\theta'))^\sigma)} & LX\times_{S^1} ES^1 \\
      S^n \times LX\times_{S^1} ES^1 \ar[u]^{\tau \times 1}\ar[ru]_-{\hspace{1.8cm } ad(L_*(\theta + \theta'))^\sigma= ad(L_*(\theta) + L_*(\theta'))^\sigma}
    }
  \]
  in which $(ad(L_*(\theta))^\sigma\mid ad(L_*(\theta'))^\sigma) \circ i_1 = ad(L_*(\theta))^\sigma$ and $(ad(L_*(\theta))^\sigma\mid ad(L_*(\theta'))^\sigma) \circ i_2 = ad(L_*(\theta'))^\sigma$, where $\tau$ is the pinch map and $i_j$ is the map induced by the inclusion $S^n \to S^n \vee S^n$ in the $j$ factor.
  Then, it follows that the horizontal arrow assigns $\chi + u\overline{L}_\theta(\chi) + v\overline{L}_{\theta'}(\chi)$ to an element
  $\chi \in H^*(LX\times_{S^1}ES^1)$, where $(u, 0)$ and $(0, v)$ denotes the generators of $H^n(S^n\vee S^n)$.
  The definition of the summation in $\pi_*(\text{aut}_1(LX))$ implies that the diagram above is commutative. Moreover, by definition, the slant arrow induces $\overline{L}_{\theta + \theta'}$. This yields that $\overline{L}_\theta + \overline{L}_{\theta'} = \overline{L}_{\theta + \theta'}$.

  Let $\overline{\theta}$ be the inverse of $\theta$ in $\pi_*(\aut X)$
  with respect to the multiplication of the monoid $\aut X$. Since $\overline{L}_{( \ )}$ is a homomorphism, it follows that
  $\overline{L}_{\overline{\theta}} = - \overline{L}_{\theta}$.

  We recall the Samelson product $\langle \ , \ \rangle$ on the homotopy group $\pi_*(\aut X)$. For elements $\theta : \pi_p(\aut X)$
  and $\theta' \in \pi_q(\aut X)$, the product is induced by the map
  $\gamma : S^p \times S^q \to \aut X$ defined by $\gamma(x, y) = \theta(x)\circ \theta'(y)\circ \overline{\theta}(x) \circ \overline{\theta'}(y)$.
  Then, we have
  \[
    (L\circ \gamma)(x, y) = L_*(\theta)(x)\circ L_*(\theta')(y) \circ L_*(\overline{\theta})(x) \circ L_*(\overline{\theta'})(y).
  \]
  Observe that $L$ is a morphism of monoids. Therefore, the adjoint $\Gamma$ to $L\circ \gamma$ fits in the commutative diagram
  \[
    \xymatrix@C55pt@R12pt{
      S^p \times S^q\times LX \times_{S^1} ES^1 \ar[rdd]_{\Gamma \times_{S^1} 1} \ar[r]^-{\text{Diag}\times \text{Diag} \times_{S^1} 1} & S^p \times S^p\times S^q \times S^q \times LX \times_{S^1} ES^1\ar[d]^{1_{S^p} \times T \times 1_{S^q} \times 1_{LX}\times_{S^1} 1_{ES^1}} \\
      &  S^p \times S^q\times S^p \times S^q \times LX \times_{S^1} ES^1\ \ar[d]^{[F,G]\times _{S^1}1_{ES^1}}\\
      & LX \times_{S^1} ES^1
    }
  \]
  where $\text{Diag}$ is the diagonal map, $T$ denotes the transposition and $[F, G]$ is defined by the composite
  \[
    ad(L_*(\theta))\circ (1_{S^p}\times ad(L_*(\theta')))\circ (1_{S^p\times S^q}\times  ad(L_*(\overline{\theta})))
    \circ  (1_{S^p\times S^q\times S^p}\times  ad(L_*(\overline{\theta'}))).
  \]
  The commutativity follows from the same consideration as in the proof of \cite[Theorem 4.3]{FLS}.
  Moreover, the same computation as in \cite[page 394]{FLS} works well on homology. It turns out that
  $\overline{L}_{\langle \theta , \theta' \rangle} =
  \overline{L}_\theta\overline{L}_{\theta'}-(-1)^{pq}\overline{L}_{\theta'}\overline{L}_\theta$.
  This completes the proof of \cref{thm:LieAlg}.
\end{proof}

\subsection{An algebraic construction of $\overline{L}$}
In what follows, we assume that the underling field is rational.
The assertion below shows that the geometric derivations $L_{( \ )}$ and $\overline{L}_{( \ )}$ are related to the
Loday's derivations $_aL_{( \ )}$  and $_a\overline{L}_{( \ )}$ in \cref{prop:Loday'sDer}, respectively.
Observe that $_aL_{\theta}$  is the derivation $L_\theta$ in Definition  \ref{defn:derivationSullivan}.

%
\begin{proof}[Proof of Proposition  \ref{prop:LieAlg-New}] The standard algebraic model for the evaluation map $\ev : LX \times S^1 \to X$ plays an important role in our proof; see \cite{V-S} for the model for $\ev$.
  We consider the following commutative diagram consisting of continuous maps
  \begin{equation}\label{eq:adjoints_top}
    \xymatrix@C20pt@R18pt{
      \Map(S^n, \Map(X, X))\ar[r]^-{L_*} \ar[d]_{ad}^{\cong}& \Map(S^n, \Map(LX, LX))\ar[d]^{ad_1}_{\cong} \\
      \Map(S^n\times X, X) \ar[r]^-{ad(L_*)} \ar[rd]_-{\psi:= ad_2\circ ad(L_*)}& \Map(S^n \times LX, LX) \ar[d]^{ad_2}_{\cong} \\
      & \Map(S^n\times LX \times S^1, X). }
  \end{equation}
  It follows that $ad_1(u_\theta) = ad(L_*)(ad(\theta))$ for
  $\theta \in \Map(S^n, \Map(X, X))$
  and $\psi(\phi) = \phi\circ (1\times \ev)$ for $\phi \in  \Map(S^n\times X,  X)$.
  In what follows, we may assume that $X$ is a rational space.
  Then, we have the following diagram for the homotopy sets
  \[
    \xymatrix@C15pt@R15pt{
      \pi_n(\aut X) \ar[r]^k & [S^n \times X, X] \ar[r]^-{ad(L_*)} \ar[d]_{\mu}^{\cong} &  [S^n \times LX, LX] \ar[r]^-{ad_2}_-{\cong} \ar[d]_{\mu}^{\cong}&
      [S^n\times LX \times S^1, X]  \ar[d]_{\mu}^{\cong}\\
      & [{\mathcal M}_X, {\mathcal M}_{S^n\times X}] &  [{\mathcal M}_{LX}, {\mathcal M}_{S^n\times LX}]  &  [{\mathcal M}_{X},  {\mathcal M}_{S^n\times LX\times S^1}]
    }
  \]
  which are given by the sets of continuous maps mentioned above. Here $k$ is induced by the adjoint $ad$ mentioned above and ${\mathcal M}_Y$ denotes a minimal Sullivan model for a space $Y$ and $\mu$ is the Sullivan--de Rham correspondence between rational spaces and CDGAs; see, for example,
  \cite{BG}.
  We use the same notation for a map as that for its homotopy class.

  We may replace ${\mathcal M}_{S^n\times X}$ with the CDGA $H^*(S^n;\Q)\otimes {\mathcal M}_X$; see \cite[Proposition 12.9]{FHT1}. Then we write
  $(\mu\circ k)(\theta') = 1\otimes 1_{{{\mathcal M}_X}} + \iota \otimes \theta$, where $\iota$ is the generator of  $H^n(S^n;\Q)$. Observe that, by definition,
  $\Phi(\theta') = \theta$ for the map $\Phi$ in \cref{prop:LieAlg-New}. In order to prove Proposition  \ref{prop:LieAlg-New}, it suffices to show

  \begin{lem}\label{lem:adjoint_alg} For $\theta' \in \pi_n(\text{\em aut}_1(X))$, one has
    $(\mu\circ ad(L_*) \circ k)(\theta') =
    1\otimes 1_{{\mathcal M}_{LX}} + \iota{}_aL_\theta$.
  \end{lem}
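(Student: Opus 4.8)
The plan is to reduce everything to Sullivan representatives already produced in the paper: the representative of a looped map in \cref{lem:ModelLf} together with the geometric factorisation of $ad(u_\theta)$ coming from the construction of $u_\theta$. Let $\varphi\colon{\mathcal M}_X\to{\mathcal M}_{S^n}\otimes{\mathcal M}_X$ be a Sullivan representative of $ad(\theta)=k(\theta')$, so that $(\mu\circ k)(\theta')$ is the class of $\varphi$; by hypothesis $\varphi(v)\equiv 1\otimes v+u\otimes\theta(v)$ modulo $({\mathcal M}_{S^n})^{>n}\otimes{\mathcal M}_X$, where $[u]=\iota$ and $\Phi(\theta')=\theta$. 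Recall from the construction of $u_\theta$ that $ad(u_\theta)=L(ad(\theta))\circ(s\times 1)$, with $s\colon S^n\to LS^n$ the constant-loop section of $\ev_0\colon LS^n\to S^n$. Since $ad(L_*)\circ k$ carries $\theta'$ to the class of $ad(u_\theta)$, it remains to compute a Sullivan representative of this composite and extract its part linear in $\iota$.

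First I would model the two factors. By \cref{lem:ModelLf} a representative of $L(ad(\theta))$ is $\HochschildModel\varphi\colon\HochschildModel\to\HochschildModel_{S^n}\otimes\HochschildModel$ given by $\HochschildModel\varphi(v)=\varphi(v)$ and $\HochschildModel\varphi(\bar{v})=s\varphi(v)$, where $s$ denotes the suspension derivation of the tensor product. For the section I would use the retraction $p\colon\HochschildModel_{S^n}\to{\mathcal M}_{S^n}$ with $p(u)=u$ and $p(\bar{u})=0$; it is a CDGA morphism compatible with the differential — the only substantive check being the even sphere, where $p$ respects $du'=u^2$ and sends $d\bar{u}'$, a multiple of $u\bar{u}$, to $0=d\,p(\bar{u}')$ — so $p$ models $s$. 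Hence $(\mu\circ ad(L_*)\circ k)(\theta')=(p\otimes 1)\circ\HochschildModel\varphi$.

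Next I would run the computation, which is identical to the one in the proof of \cref{prop:Lie_e}. Using $\varphi(v)\equiv 1\otimes v+u\otimes\theta(v)$ and that $s$ is a degree $(-1)$ derivation with $s(u)=\bar{u}$ and $s(v)=\bar{v}$, one gets
\[
\HochschildModel\varphi(\bar{v})\equiv 1\otimes\bar{v}+\bar{u}\otimes\theta(v)+(-1)^n u\otimes s\theta(v)
\]
modulo $(\HochschildModel_{S^n})^{>n}\otimes\HochschildModel$. Applying $p\otimes 1$ annihilates the $\bar{u}$-term, so on generators the composite sends $v\mapsto 1\otimes v+\iota\otimes\theta(v)$ and $\bar{v}\mapsto 1\otimes\bar{v}+(-1)^n\iota\otimes s\theta(v)$. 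Since $\theta=\Phi(\theta')$ has degree $-n$, \cref{defn:derivationSullivan} gives $L_\theta(v)=\theta(v)$ and $L_\theta(\bar{v})=(-1)^{|\theta|}s\theta(v)=(-1)^n s\theta(v)$, which are exactly the coefficients of $\iota$ just found. As $\iota^2=0$ in $H^*(S^n;\Q)$, the assignment $\chi\mapsto 1\otimes\chi+\iota\otimes L_\theta(\chi)$ is a CDGA morphism, so agreement on generators extends to all of $\HochschildModel$; recalling ${}_aL_\theta=L_\theta$, this yields $(\mu\circ ad(L_*)\circ k)(\theta')=1\otimes 1_{{\mathcal M}_{LX}}+\iota\,{}_aL_\theta$.

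I expect the main difficulty to be bookkeeping rather than conceptual: confirming that $p$ genuinely models the section (differential-compatibility on $\HochschildModel_{S^n}$, including the even-sphere case) and tracking the Koszul sign $(-1)^n=(-1)^{|\theta|}$ through the suspension derivation so that it matches the sign built into \cref{defn:derivationSullivan}.
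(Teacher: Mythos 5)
Your route is genuinely different from the paper's — you factor $ad(u_\theta)=L(ad(\theta))\circ(s\times 1)$, model $L(ad(\theta))$ by \cref{lem:ModelLf}, and rerun the computation from the proof of \cref{prop:Lie_e} through a model of the constant section — and the computation itself, including the sign $(-1)^n=(-1)^{\deg{\theta}}$ against \cref{defn:derivationSullivan}, is correct. The genuine gap is the step ``$p$ is a CDGA morphism compatible with the differential \dots so $p$ models $s$''. Being a differential-compatible retraction of the model of $\ev_0$ does \emph{not} determine the homotopy class of the section being modeled, and the case you flag as the only substantive check, the even sphere, is exactly where this fails: for $n=2$ one has $\deg{\bar u'}=2n-2=2=\deg{u}$, so every map $r_\lambda\colon \HochschildModel_{S^2}\to{\mathcal M}_{S^2}$ with $r_\lambda(u)=u$, $r_\lambda(u')=u'$, $r_\lambda(\bar u)=0$, $r_\lambda(\bar u')=\lambda u$ is a chain-map retraction, and distinct $\lambda$ give maps with distinct linear parts between minimal Sullivan algebras, hence pairwise non-homotopic; they realize the one-parameter family of rational homotopy classes of sections of $\ev_0\colon LS^2\to S^2$ coming from $\pi_3(S^2)\otimes\Q\cong\Q$. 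The choice matters for your conclusion: if $\varphi(v)$ contains a term $u'\otimes w$ — precisely the kind of term hidden in your congruence modulo $({\mathcal M}_{S^n})^{>n}\otimes\wedge V$ — then $s(u'\otimes w)$ contributes $\bar u'\otimes w$, which under $r_\lambda\otimes 1$ and the projection to $H^*(S^2;\Q)\otimes\HochschildModel$ becomes an extra term $\lambda\,\iota\otimes w$, destroying the asserted formula. So as written your argument proves the lemma only when degree reasons make the retraction unique (odd $n$, and even $n\geq 4$, where $({\mathcal M}_{S^n})^{2n-2}=0$ forces $r(\bar u')=0$), but not for $n=2$, which the lemma must cover.

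To close the gap you must show the constant section corresponds to $\lambda=0$, i.e.\ that the projection killing $\bar u,\bar u'$ is homotopic to a Sullivan representative of $s$. The natural repair is an adjunction-uniqueness argument: $s$ is the unique map with $\ev\circ(s\times 1_{S^1})=\pr_{S^n}$ under the bijection $\Map(S^n, LS^n)\cong\Map(S^n\times S^1, S^n)$, and one checks on generators, using $p(\bar u)=p(\bar u')=0$ and the Vigu\'e--Sullivan model $\mu(\ev)(\omega)=\omega\otimes 1+(-1)^{\deg{\omega}-1}s\omega\otimes t$, that $(p\otimes 1)\circ\mu(\ev)$ is exactly the model of $\pr_{S^n}$, whence $p\simeq\mu(s)$. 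Note, however, that this is precisely the mechanism of the paper's own proof, which applies the uniqueness of adjoints directly to $ad_2\colon[S^n\times LX, LX]\to[S^n\times LX\times S^1, X]$ and the model of $\ev\colon LX\times S^1\to X$, thereby bypassing any model of the section (and any case analysis on the parity of $n$). With that step added your proof becomes correct, at the cost of essentially embedding the paper's argument, run for $X=S^n$, as a lemma inside yours.
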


  \noindent
  In fact,
  applying the integration $\int_{S^n}$  to the equality in \cref{lem:adjoint_alg} on the cohomology yields the commutativity of the diagram in \cref{prop:LieAlg-New}.
\end{proof}

\begin{proof}[Proof of \cref{lem:adjoint_alg}] We consider the adjoint map $ad_2$. The uniqueness of the adjoint correspondence shows that if we have a morphism $L(\theta')$ of CDGAs which makes the following triangle
  \[
    \xymatrix@C15pt@R10pt{
      H^*(S^n;\Q)\otimes {\mathcal M}_{LX}\otimes \wedge(t) & &  {\mathcal M}_{LX}\otimes \wedge(t) \ar[ll]_-{L(\theta')\otimes 1} \\
      & {\mathcal M}_X =(\wedge V, d) \ar[ur]_{\mu(\ev)} \ar[ul]^-(0.6){
      \mu(1\times \ev)\circ \mu(k(\theta'))=\mu (k(\theta') \circ (1\times \ev))\ \ \ \ \ \ \ \ \ \ \ } &
    }
  \]
  commutative up to homotopy, the map $L(\theta')$ is nothing but the map $(\mu\circ ad(L_*) \circ k)(\theta')$. In fact,
    the map $ad_2$ assigns the realization $|L(\theta')|$
    of $L(\theta')$ to the realization $|\mu (k(\theta') \circ (1\times \ev))|$, which is homotopic to
    $ad_2\circ ad(L_*)(\theta')=k(\theta') \circ (1\times \ev))$.
    Observe that the equality follows from the commutativity of the diagram (\ref{eq:adjoints_top}).
    Then we have
    $ad_2(|L(\theta')|)\simeq |\mu (k(\theta') \circ (1\times \ev))|\simeq  ad_2(ad(L_*)(k(\theta')))$. The injectivity of the map $ad_2$ yields that $|L(\theta')| \simeq ad(L_*)(k(\theta'))$.  This implies that the map $L(\theta')$ is a model for $ad(L_*)(k(\theta'))$; that is, $L(\theta')= (\mu\circ ad(L_*) \circ k)(\theta')$.

  We recall the Sullivan model ${\mathcal M}_{LX} = \HochschildModel$ described in \cref{sect:preliminaries}.
  Moreover, we may choose a model $\mu(\ev)$ for the evaluation map so that $\mu(\ev)(\omega) = \omega \otimes 1 + (-1)^{\deg{\omega}-1}s\omega \otimes t$ for $\omega \in V$; see \cite{V-S}.
  Since $(\mu\circ k)(\theta') = 1\otimes 1_{{\mathcal M}_X} + \iota \otimes \theta$, it follows from the commutativity for the triangle that
  \begin{eqnarray*}
    &&L(\theta')\omega \otimes 1 + (-1)^{\deg{\omega}-1}L(\theta')(s\omega)\otimes t \\
    &=& 1\otimes (\omega\otimes 1 + (-1)^{\deg{\omega}-1}s\omega \otimes t) + \iota \otimes (\theta(\omega)\otimes 1 + (-1)^{\deg{\theta(\omega)}-1}s\theta(\omega) \otimes t )
  \end{eqnarray*}
  for $\omega \in V$.  Therefore, we see that $L(\theta')\omega = 1\otimes \omega + \iota \theta(\omega)$ and
  $L(\theta')(s\omega)= 1\otimes s\omega + (-1)^{\deg{\theta}}s\theta(\omega)$. The definition of ${}_aL_\theta$ shows that $L(\theta') =1\otimes 1_{{\mathcal M}_{LX}} + \iota{}_aL_\theta$. This completes the proof.
\end{proof}


Next we review the relationship with cyclic homology. Let \(\mixed  = (\mixed , d, B) \) be a non-negatively graded mixed complex.
We introduce a variable \(u\) of degree $2$ and consider the graded module \(\mixed [u] = \mixed\otimes\Q[u]\).

\begin{defn}
  The \textit{cyclic complex}
  \(\cyclic{\mixed}\) of \(\mixed\) is the complex \((\mixed [u], \dcyc)\),
  where \(\dcyc\) is the \(\Q[u]\)-linear map defined by \(\dcyc = \done + u\dtwo\).
  Its cohomology will be called the \textit{cyclic cohomology} of \(\mixed\)
  and denoted by \(HC(\mixed)\).
\end{defn}


We recall the mixed DGA $(\HochschildModel, d, s)$ mentioned in Section \ref{sect:preliminaries}. With the model, the minimal Sullivan model $\cyclicModel$ of the orbit space $ES^1\times_{S^1}LX$ is defined  by $\cyclicModel:=(\HochschildModel[u], d+us)$; see \cite[Theorem A]{V-B1}. Thus we have an isomorphism $H(\cyclicModel) \cong H^{*} (ES^1\times_{S^1}LX; \Q)$.
Observe that \(\cyclic{\HochschildModel}\) is nothing but the complex $\cyclicModel$ defined above and then
\(H(\cyclic{\HochschildModel})\)
is isomorphic to the cyclic homology \cite{BV88} of \((\wedge V, d)\).

  Let \((\lie, \cce, \ccl, \ccs, \cct)\) be a homotopy Cartan calculus on \(\mixed\).
  For \(\theta\in\lie\), define \({}_a\cycl_\theta\in\End(\cyclic{\mixed})\)
  by extending \(\ccl_\theta\)  to a \(\Q[u]\)-linear map.
  This gives a linear map \({}_a\cycl\colon\lie\to\End(\cyclic{\mixed})\).

\begin{lem}[{\cite[Lemmas 3.4 and 3.10]{FK}}]
  The map \({}_a\cycl\colon\lie\to\End(\cyclic{\mixed})\)
  is a morphism of dg Lie algebras.
\end{lem}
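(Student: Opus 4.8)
The plan is to deduce the statement from \cref{lem:Lie_homo} via a single extra commutation relation, after which everything is formal. Write $(\lie, \cce, \ccl, \ccs, \cct)$ for the underlying homotopy Cartan calculus on the mixed complex $(\mixed, \done, \dtwo)$, and for $f \in \End(\mixed)$ let $\widetilde{f} \in \End(\cyclic{\mixed})$ denote its $\Q[u]$-linear extension, so that ${}_a\cycl_\theta = \widetilde{\ccl_\theta}$ by definition. The assignment $f \mapsto \widetilde{f}$ is multiplicative, $\widetilde{fg} = \widetilde{f}\,\widetilde{g}$, and since $u$ has even degree it preserves the graded commutator, $[\widetilde{f}, \widetilde{g}] = \widetilde{[f, g]}$. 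By \cref{lem:Lie_homo} the Lie derivative $\ccl \colon \lie \to \End(\mixed)$ is a morphism of dg Lie algebras; in particular $\ccl_{[\theta, \rho]} = [\ccl_\theta, \ccl_\rho]$ and $[\done, \ccl_\theta] = \ccl_{\dlie\theta}$. The bracket identity ${}_a\cycl_{[\theta, \rho]} = [{}_a\cycl_\theta, {}_a\cycl_\rho]$ is then immediate from multiplicativity of $\widetilde{(-)}$.

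It remains to check that ${}_a\cycl$ is a chain map, i.e.\ $[\dcyc, {}_a\cycl_\theta] = {}_a\cycl_{\dlie\theta}$, where $\dcyc = \done + u\dtwo$. First I would record the elementary computation that, for any $\theta$,
\[
  [\dcyc, {}_a\cycl_\theta] = \widetilde{[\done, \ccl_\theta]} + u\,\widetilde{[\dtwo, \ccl_\theta]},
\]
obtained by letting both sides act on $x\otimes u^k$ and separating by $u$-degree (the even degree of $u$ makes the Koszul signs of the two summands agree with those of the graded commutators in $\End(\mixed)$). By \cref{lem:Lie_homo} the first summand is $\widetilde{\ccl_{\dlie\theta}} = {}_a\cycl_{\dlie\theta}$, so the lemma reduces to the vanishing of the coefficient of $u$, that is, to proving
\[
  [\dtwo, \ccl_\theta] = 0,
\]
which says that the Lie derivative commutes with Connes' operator $\dtwo$.

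This commutation is the one genuinely computational point, and I expect it to be the main obstacle, since it is the only place where the full mixed-complex structure—rather than the mere dg structure invoked in \cref{lem:Lie_homo}—enters. To prove it I would substitute the defining pre-Cartan identity $\ccl_\theta = [\dtwo, \cce_\theta] + [\done, \ccs_\theta] + \ccs_{\dlie\theta}$ of \cref{defn:preCartan_generalization}, apply $[\dtwo, -]$, and expand each of the three resulting summands by the graded Jacobi identity: $[\dtwo, [\dtwo, \cce_\theta]] = 0$ because $\dtwo^2 = 0$; $[\dtwo, [\done, \ccs_\theta]] = 0$ because $[\dtwo, \done] = 0$ together with the pre-Cartan identity $[\dtwo, \ccs_\theta] = 0$; and $[\dtwo, \ccs_{\dlie\theta}] = 0$ by that same identity applied to $\dlie\theta$. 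Summing gives $[\dtwo, \ccl_\theta] = 0$, which completes the verification that ${}_a\cycl$ is a chain map and hence, with the bracket identity above, a morphism of dg Lie algebras. As a sanity check, one may note that in the explicit calculus of \cref{prop:HCartan_Hochschild} the relation $[\dtwo, \ccs_\theta] = 0$ needed here is precisely identity (iii), $[B, S_\theta] = 0$, established in its proof.
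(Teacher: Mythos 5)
Your proof is correct. Note first that the paper itself offers no proof of this lemma: it is quoted with the citation to Fiorenza--Kowalzig (their Lemmas 3.4 and 3.10), so there is no in-paper argument to compare against; what you have written is a complete, self-contained verification inside the paper's own axiomatics, and it is essentially the argument underlying the cited lemmas. Your reduction is the right one: $\Q[u]$-linear extension preserves composition and (since $\deg u = 2$ is even) graded commutators, so bracket preservation and the $\done$-part of the chain-map condition come straight from \cref{lem:Lie_homo}, and everything hinges on the single relation $[\dtwo, \ccl_\theta] = 0$. Your derivation of that relation is sound: substituting $\ccl_\theta = \dtwo(\cce_\theta) + \done(\ccs_\theta) + \ccs_{\dlie\theta}$ and applying $\dtwo$ kills the first term by $\dtwo^2 = 0$, the second by $[\done, \dtwo] = 0$ together with $\dtwo(\ccs_\theta) = 0$, and the third by $\dtwo(\ccs_{\dlie\theta}) = 0$ — all of which are pre-Cartan axioms from \cref{defn:preCartan_generalization} plus the mixed-complex relations. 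Two small remarks. First, your argument only ever uses the pre-Cartan data $(\cce, \ccl, \ccs)$; the Gelfan'd--Daletski\u\i--Tsygan homotopy $\cct$ is never needed, which is worth noting since the lemma is stated for a full homotopy Cartan calculus. Second, you work in the endomorphism case $\H = \End(\mixed)$ with commutators and Jacobi identities, which matches the lemma as stated ("on $\mixed$"); the same computation runs even more directly in the abstract mixed dg Lie algebra $\H$ of \cref{defn:Cartan_generalization}, where one simply computes $\dtwo(\ccl_\theta) = \dtwo\dtwo(\cce_\theta) + \dtwo\done(\ccs_\theta) + \dtwo(\ccs_{\dlie\theta}) = 0$ without invoking Jacobi at all. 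Your closing sanity check against identity (iii) of \cref{prop:HCartan_Hochschild} is apt: in the Hochschild calculus the relation $[\dtwo, \ccs_\theta] = 0$ is exactly $[B, S_\theta] = 0$.
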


For the homotopy Cartan calculus in Proposition \ref{prop:CartanCalculusSullivanModel}, we obtain the morphism ${}_a\overline{L}_{( \ )} : \Der (\wedge V) \to \Der (\cyclicModel )$ defined by $_a\overline{L}_\theta = {}_aL_\theta \otimes 1_{\Q[u]}$ on $\cyclicModel$ for $\theta \in \Der (\wedge V)$.
We recall the cobar-type Eilenberg-Moore spectral sequence (EMSS) in \cite[Theorem 7.5]{KNWY} converging
to the string cohomology $H^*_{S^1}(LX; \Q)$ with
\[
E_2^{*,*} \cong \text{Cotor}_{H^*(S^1; \Q)}^{*,*}(H^*(LX; \Q), \Q).
\]
Let $\{F^p\}_{\geq 0}$ be the decreasing filtration of $H^*_{S^1}(LX;\Q)$ associated with the EMSS.

\begin{thm}\label{thm:LieAlg-New}
   There exists a commutative diagram
  \[
    \xymatrix@C20pt@R20pt{
      \pi_*(\aut X)\otimes \Q \ar[r]^{\overline{L}_{( \ )}}  \ar[d]_{\cong}^{\Phi}& \Der_{*}(H^*_{S^1}(LX;\Q)) \ar[d]^{\cong}\\
      H_*(\Der(\wedge V)) \ar[r]_{{}_a\overline{L}_{( \ )}} & \Der_{*}(H^*(\cyclicModel))
    }
  \]
  modulo the filtration of the EMSS in the sense that $({}_a\overline{L}_{\theta} - \overline{L}_\theta )(F^p) \subset F^{p+1}$ for
  $\theta$ in $\pi_*(\aut X)\otimes \Q$ and $p \geq 0$, where $\{F^p\}$ is the filtration of $H^*_{S^1}(LX;\Q)$ associated with the EMSS mentioned above.
\end{thm}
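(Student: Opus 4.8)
The plan is to deduce the statement from the non-equivariant comparison \cref{prop:LieAlg-New} by comparing the two operators on the associated graded of the cobar-type Eilenberg--Moore spectral sequence, rather than trying to produce an on-the-nose commutative square. First I would set up the filtration. The cyclic model $\cyclicModel = (\HochschildModel[u], d+us)$, filtered by powers of $u$, is the filtered complex whose spectral sequence is the cobar-type EMSS of \cite[Theorem 7.5]{KNWY}; in particular the decreasing filtration $\{F^p\}$ on $H^*_{S^1}(LX;\Q)\cong H(\cyclicModel)$ is realised by the $u$-adic filtration, and $E_2^{*,*}\cong \text{Cotor}_{H^*(S^1;\Q)}(H^*(LX;\Q),\Q)\cong H^*(LX;\Q)\otimes\Q[u]$. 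Thus proving the claim $({}_a\overline{L}_\theta - \overline{L}_\theta)(F^p)\subset F^{p+1}$ is the same as showing that $\overline{L}_\theta$ and ${}_a\overline{L}_{\Phi(\theta)}$ induce the same endomorphism of $E_\infty=\bigoplus_p F^p/F^{p+1}$.

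Next I would check that both operators preserve this filtration and identify their associated-graded maps. On the algebraic side ${}_a\overline{L}_\theta = {}_aL_\theta\otimes 1_{\Q[u]}$ is $\Q[u]$-linear by construction (using $[{}_aL_\theta, s]=0$, so that it commutes with $d+us$), hence filtration-preserving, and its induced endomorphism on $E_2\cong H(\HochschildModel)\otimes\Q[u]$ is tautologically $H({}_aL_\theta)\otimes 1$. On the geometric side $\overline{L}_\theta$ is induced by the map $\overline{ad(u_\theta)\times_{S^1}1}$ of Borel constructions covering the identity of $BS^1$; being a morphism of $H^*(BS^1;\Q)$-modules it is compatible with the EMSS and its filtration.

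Then I would compute the $E_2$-level action of the geometric operator. Because the $S^1$-action on $S^n$ is trivial, $(S^n\times LX)\times_{S^1}ES^1$ splits as $S^n\times(ES^1\times_{S^1}LX)$, and restricting the equivariant adjoint to the fiber of the projection to $BS^1$ recovers the non-equivariant adjoint $ad(u_\theta)$ whose induced operator is the geometric $L_\theta$ of \eqref{defn:L_geom}. Since the map is the identity on the base direction, it acts as the identity on the polynomial factor, so the induced map on $E_2\cong H^*(LX;\Q)\otimes\Q[u]$ is $H(L_\theta)\otimes 1$. Now \cref{prop:LieAlg-New} (equivalently \cref{lem:adjoint_alg}) gives, under $\Phi$ and the isomorphism $H^*(LX;\Q)\cong H(\HochschildModel)$, the equality $L_\theta = {}_aL_{\Phi(\theta)}$ on $H^*(LX;\Q)$. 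Hence the two induced maps agree on $E_2$, therefore on $E_\infty$, which is exactly the desired $({}_a\overline{L}_\theta - \overline{L}_\theta)(F^p)\subset F^{p+1}$.

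Finally I would note that $\overline{L}_{(\ )}$ is a morphism of Lie algebras by \cref{thm:LieAlg}, so the diagram is one of Lie algebras. The hard part will be making the geometric half of the second and third steps rigorous: verifying that $\overline{L}_\theta$ genuinely induces a morphism of the cobar-type EMSS and that its $E_2$-action is $H(L_\theta)\otimes 1$. This requires relating the equivariant evaluation and adjoint maps to the $u$-filtered cyclic model $\cyclicModel$, and it is precisely here that the genuine equivariant Sullivan representative differs from the naive $\Q[u]$-linear extension ${}_a\overline{L}_\theta$ by terms of strictly higher $u$-filtration. That discrepancy is what forces the ``modulo filtration'' weakening and explains why an exact commutative square, as in \cref{prop:LieAlg-New}, is unavailable in the equivariant setting.
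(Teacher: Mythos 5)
Your overall route is the same as the paper's: realize the EMSS by the $u$-adic filtration on $\cyclicModel=(\HochschildModel[u],d+us)$, lift both operators to filtration-preserving cochain maps, reduce to the non-equivariant comparison (\cref{prop:LieAlg-New}, via \cref{lem:adjoint_alg}) on the fiber, and conclude that the difference shifts filtration; the paper's explicit manipulation with cocycles $x=(x^0,x^1,\dots)$ and the homotopy $h'_\theta$ is precisely the hands-on instantiation of your ``equal on the associated graded'' principle. One bookkeeping error first: your identification $E_2^{*,*}\cong H^*(LX;\Q)\otimes\Q[u]$ is false in general. That is the $E_1$-term of the $u$-filtered complex; $E_2\cong\mathrm{Cotor}_{H^*(S^1;\Q)}(H^*(LX;\Q),\Q)$ is the homology of $(H^*(LX;\Q)\otimes\Q[u],\,u\Delta)$, and $\Delta$ need not vanish. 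This is repairable, since your computation is genuinely an $E_1$-level computation and agreement of induced maps at $E_1$ already propagates through all pages to $E_\infty$, hence to the associated graded of the abutment.

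The genuine gap is the step you yourself defer as ``the hard part'': you never produce a filtration-preserving cochain representative of $\overline{L}_\theta$. The observation that $\overline{ad(u_\theta)\times_{S^1}1}$ covers the identity of $BS^1$, so that the induced map on cohomology is $H^*(BS^1;\Q)$-linear, does not by itself make $\overline{L}_\theta$ a morphism of the filtered complex computing the EMSS, nor does the splitting $(S^n\times LX)\times_{S^1}ES^1\simeq S^n\times(LX\times_{S^1}ES^1)$ identify its $E_1$-action; module structure on cohomology lives on the abutment, not on the filtered model. The paper closes exactly this hole: it takes relative Sullivan models of the two fibrations over $BS^1$ following \cite[(15.9), pages 204--205]{FHT1}, obtaining a $\Q[u]$-linear model ${\mathcal M}(\overline{v_\theta})\colon\cyclicModel\to H^*(S^n;\Q)\otimes\cyclicModel$ whose reduction modulo $u$ is a Sullivan representative $\overline{{\mathcal M}(\overline{v_\theta})}$ of the fiber map $ad(u_\theta)$; then \cref{lem:adjoint_alg} supplies the chain homotopy $h'_\theta$ with $\int_{S^n}\circ\overline{{\mathcal M}(\overline{v_\theta})}-{}_aL_\theta=dh'_\theta+h'_\theta d$ on $\HochschildModel$. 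Since $h'_\theta$ need not commute with $s$, passing to $D=d+us$ produces the correction term $-us\,h_\theta x^0$, which lies one step deeper in the filtration --- this is simultaneously the content of the paper's cocycle computation and the structural reason the square only commutes modulo $\{F^p\}$, as you correctly anticipated. So your proposal is the paper's proof in outline, with the central technical construction (the $\Q[u]$-linear filtered model) acknowledged but not supplied; without it the argument does not start, and with it your spectral-sequence packaging and the paper's direct computation coincide.
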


\begin{proof}
  The key to proving the result is that the {\it projection} of a model for the derivation $\overline{L}_\theta$ on $H^*_{S^1}(LX;\Q)$ is the model ${}_aL_\theta$ the derivation on $\HochschildModel$ considered in the proof of \cref{prop:LieAlg-New}. Let $u_\theta$ be the map stated in section \ref{subsect:geom_bar{L}}. Consider the commutative diagram
  \[
    \xymatrix@C18pt@R5pt{
      & (S^n \times LX) \times_{S^1} ES^1 \ar[ld] \ar[dd]^{ad_1(u_\theta) \times 1_{ES^1}=\overline{v_\theta}} &  S^n \times LX \ar[l] \ar[dd]^{ad_1(u_\theta)=v_\theta}\\
      BS^1 & &  \\
      & LX \times_{S^1} ES^1 \ar[lu]  &  LX  \ar[l]
    }
  \]
  whose row sequences are the fibrations associated with the universal $S^1$-bundle $S^1 \to ES^1 \to BS^1$. For simplicity, we put $v_\theta := ad_1(u_\theta)$ and $\overline{v_\theta}:= ad_1(u_\theta) \times 1_{ES^1}$. Observe that $v_\theta$ is an $S^1$-equivariant map. We moreover consider the relative Sullivan models for the fibrations and the morphism between them described in \cite[(15.9) pages 204--205]{FHT1}. Then we have a commutative diagram
  \[
    \xymatrix@C18pt@R5pt{
      & H^*(S^n;\Q) \otimes  \cyclicModel  \ar[r] & H^*(S^n;\Q) \otimes \HochschildModel \\
      \Q[u] \ar[ru] \ar[rd] & &  \\
      &  \cyclicModel \ar[uu]_{{\mathcal M}(\overline{v_\theta})} \ar[r] &  \HochschildModel  \ar[uu]_{\overline{{\mathcal M}(\overline{v_\theta})}}
    }
  \]
  in which ${\mathcal M}(\overline{v_\theta})$ and $\overline{{\mathcal M}(\overline{v_\theta})}$ are algebraic models for $\overline{v_{\theta}}$ and $v_\theta$, respectively.
  Then \cref{lem:adjoint_alg} implies that $\int_{S^n}\circ \overline{{\mathcal M}(\overline{v_\theta})}$ is chain homotopic to ${}_aL_\theta$; that is, there exists a homotopy $h_\theta'$ of degree $-1$ with $\int_{S^n}\circ \overline{{\mathcal M}(\overline{v_\theta})} - {}_aL_\theta = d h_\theta' + h_\theta' d$ in $ \HochschildModel$.
  Observe that ${\mathcal M}(\overline{v_\theta})$ is a morphism of $\Q[u]$-modules. Thus we see that for $x \in  \widetilde{F}^p:= \cyclicModel \cdot \Q^{\geq p}[u]$,
  \begin{equation}\label{eq:onE}
    \left( \int_{S^n} \circ {\mathcal M}(\overline{v_\theta}) - {}_aL_\theta\otimes1_{\Q[u]} \right)x = (d h_\theta + h_\theta d)x + \alpha_{\theta, x}
  \end{equation}
  with $h_\theta = 1_{H^*(S^n)}\otimes h'_\theta$ and for some $\alpha_{\theta, x}$ in $\widetilde{F}^{p+1}$.
  By construction, the filtration $\{\widetilde{F}^p\}_{p\geq 0}$ gives rise to the EMSS that we deal with. Moreover, the filtration $\{F_p\}_{p\geq 0}$ associated with the EMSS is induced by $\{\widetilde{F}^p\}_{p\geq 0}$.

  Suppose that  $x$ is a cocycle with respect to the differential $D:= d + us$ of $\cyclicModel$.
  We may write $x= (x^0, x^1, ... )$. By applying $D$ to the both sides of the equality \cref{eq:onE}, we have
  $0 = D(d h_\theta + h_\theta d)x^0 + D\alpha_{\theta, x}'$, where
  $\alpha_{\theta, x}':= (d h_\theta + h_\theta d)x^{\geq 1} + \alpha_{\theta, x}$.
  Observe $\int_{S^n} \circ {\mathcal M}(\overline{v_\theta})$ and ${}_aL_\theta\otimes1_{\Q[u]}$ are cochain maps.
  Since $d x^0 =0$, it follows that $0=  us(d h_\theta x^0) + D\alpha_{\theta, x}'$. Thus we see that the element
  $-us  h_\theta x^0 + \alpha_{\theta, x}'$ is a cocycle in $\widetilde{F}^{p+1}$. It turns out that
  \[
    \left(\int_{S^n} \circ {\mathcal M}(\overline{v_\theta}) - {}_aL_\theta\otimes1_{\Q[u]}\right)x = Dh_\theta'x^0 + (-us  h_\theta x^0 + \alpha_{\theta, x}').
  \]
  By definition, we have  $\int_{S^n} \circ {\mathcal M}(\overline{v_\theta}) = \overline{L}_{\theta}$ and  ${}_aL_\theta\otimes1_{\Q[u]}= {}_a\overline{L}_{\theta}$ on the homology. This fact and the equality above yield the result.
\end{proof}

In a particular case, the square in \cref{thm:LieAlg-New} is commutative. To see this,
we first recall the BV-exactness of a space, which is a new homotopy invariant introduced in \cite{KNWY}.

\begin{defn}\label{defn:BV}(\cite[Definition 2.9]{KNWY})
A simply-connected space $X$ is {\it BV exact} if
$\im \widetilde{\Delta} = \ker \widetilde{\Delta}$ for the reduced BV operator
$\widetilde{\Delta} : \widetilde{H}_*(LX) \to  \widetilde{H}_{*+1}(LX)$.
\end{defn}

We observe that a formal space and a space which admits  positive weights are BV exact; see \cite[Assertion 1.2]{KNWY}.

\begin{cor}\label{cor:bv}
  Let $X$ be a BV exact space. Then the diagram in \cref{thm:LieAlg-New}
  is indeed commutative.
\end{cor}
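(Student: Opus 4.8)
The plan is to promote the filtration estimate of \cref{thm:LieAlg-New} to an exact equality by climbing the filtration of the Eilenberg--Moore spectral sequence, using BV-exactness to kill the correction terms at every stage. Write $\Xi_\theta := {}_a\overline{L}_\theta - \overline{L}_\theta$ for $\theta\in\pi_*(\aut X)\otimes\Q$. Both ${}_a\overline{L}_\theta$ and $\overline{L}_\theta$ are $\Q[u]$-linear derivations of $H^*(\cyclicModel)\cong H^*_{S^1}(LX;\Q)$: the former since ${}_a\overline{L}_\theta = {}_aL_\theta\otimes 1_{\Q[u]}$, and the latter because $\overline{L}_\theta = \int_{S^n}\circ{\mathcal M}(\overline{v_\theta})$ with ${\mathcal M}(\overline{v_\theta})$ a morphism of $\Q[u]$-modules, as recorded in the proof of \cref{thm:LieAlg-New}. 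Hence $\Xi_\theta$ is a $\Q[u]$-linear derivation with $\Xi_\theta(F^p)\subseteq F^{p+1}$. Reducing modulo $u$ identifies $\cyclicModel/(u)=(\HochschildModel,d)$ with the loop cohomology and carries the pair $({}_a\overline{L}_\theta,\overline{L}_\theta)$ to $({}_aL_\theta,L_\theta)$, which coincide by \cref{thm:e_and_L} and \cref{prop:LieAlg-New}; so $\Xi_\theta$ vanishes on the filtration-$0$ quotient, consistently with $\Xi_\theta(F^0)\subseteq F^1$. The target is to show $\Xi_\theta(F^p)\subseteq\bigcap_k F^{p+k}$ and then invoke Hausdorffness.

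Next I would record the structural input supplied by \cref{defn:BV}. In the cobar-type EMSS one has $E_1 = H^*(LX;\Q)[u]$ with $d_1 = u\Delta$, where $\Delta = H(s)$ is the BV operator. BV-exactness says exactly that the reduced operator $\widetilde\Delta$ is exact, so in positive $u$-filtration the $E_2$-page is the cokernel $E_2^p=\ker\Delta/\im\Delta$ for $p\geq 1$ (and $\ker\Delta$ for $p=0$), and the spectral sequence collapses at $E_2$; this is the degeneration established in \cite{KNWY}. The decisive feature is that in every positive filtration $E_\infty^p=\ker\Delta/\im\Delta$ is a quotient by $\im\Delta=\im H(s)$, so any class lying in the image of the operator induced by $s$ becomes zero on $E_\infty^{\geq 1}$.

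Then I would feed this into the correction-term computation from the proof of \cref{thm:LieAlg-New}. There the filtration-raising part of $\Xi_\theta$ is produced by the homotopy $h'_\theta$, and its leading contribution is $\pm\,u\,s\,h'_\theta$ applied to the leading representative (using $[s,{}_aL_\theta]=0$ and $s^2=0$). Consequently, for a class $[x]$ of filtration exactly $p$, the image $\Xi_\theta[x]$, read in $F^{p+1}/F^{p+2}\cong E_\infty^{p+1}=\ker\Delta/\im\Delta$, is represented by an $s$-image and therefore lies in $\im\Delta$, i.e. it vanishes. This yields the improved estimate $\Xi_\theta(F^p)\subseteq F^{p+2}$. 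The same analysis of the correction term applies at each higher stage: the obstruction read in $F^{p+k}/F^{p+k+1}\cong E_\infty^{p+k}$ is again an $s$-image, killed in the cokernel, so by induction $\Xi_\theta(F^p)\subseteq F^{p+k}$ for all $k$. Since $X$ is of finite type, each cohomological degree of $H^*_{S^1}(LX;\Q)$ is finite-dimensional and the $u$-filtration is Hausdorff with $\bigcap_k F^{p+k}=0$ (a purely degree-counting fact, as multiplication by $u$ raises total degree by $2$); therefore $\Xi_\theta=0$, which is precisely the commutativity of the square in \cref{thm:LieAlg-New}.

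The main obstacle I anticipate is the inductive step: I must control the \emph{entire} correction term, not merely its leading summand $\pm\,u\,s\,h'_\theta$, and verify uniformly at every filtration level that the map induced on $E_\infty^{p+k}$ factors through the operator induced by $s$ (hence through $\im\Delta$). This amounts to showing that the higher associated-graded maps of the $\Q[u]$-linear derivation $\Xi_\theta$ all vanish; the vanishing of the first one alone gives only $\Xi_\theta(F^p)\subseteq F^{p+2}$, so the careful bookkeeping of $\alpha'_{\theta,x}$ through the iteration, together with $\Q[u]$-linearity and $s^2=0$, is the delicate point on which the argument rests.
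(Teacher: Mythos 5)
Your induction collapses at exactly the point you flag as delicate, and that point is not fixable by the bookkeeping you describe. From the proof of \cref{thm:LieAlg-New}, the correction term for a cocycle $x=(x^0,x^1,\dots)$ of filtration $p$ is $-us\,h_\theta x^0+\alpha'_{\theta,x}$ with $\alpha'_{\theta,x}=(dh_\theta+h_\theta d)x^{\geq 1}+\alpha_{\theta,x}$, and nothing in that proof identifies $\alpha'_{\theta,x}$, or the class of the \emph{total} cocycle in the associated graded, as lying in $\im H(s)=\im\Delta$. Even your leading summand does not literally do what you claim: $s\,h_\theta x^0$ need not be a $d$-cocycle, so it has no homology class of its own, and only the full cocycle $-us\,h_\theta x^0+\alpha'_{\theta,x}$ determines an element of $E_\infty^{p+1}$; the assertion that this element "is an $s$-image, killed in the cokernel" is unsupported. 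Consequently even the first improvement $\Xi_\theta(F^p)\subseteq F^{p+2}$ is unproven, and the whole inductive climb $\Xi_\theta(F^p)\subseteq F^{p+k}$ rests on it.

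The deeper issue is that you use BV-exactness far too weakly, which is why you are forced into this uncontrolled induction at all. BV-exactness says $\ker\widetilde{\Delta}=\im\widetilde{\Delta}$, so (apart from the split $\Q[u]$ summand in the decomposition of the EMSS) the $E_2$-page is \emph{zero} in positive filtration --- there is no nontrivial quotient $\ker\Delta/\im\Delta$ in which particular classes need to be shown to die; everything there dies. This is precisely what the paper invokes: by \cite[Corollary 7.4]{KNWY} the EMSS collapses at $E_2$, and by \cite[Lemma 7.5]{KNWY} one has $F^p=0$ for $p\geq 1$; then the single estimate $({}_a\overline{L}_\theta-\overline{L}_\theta)(F^0)\subseteq F^1=0$ from \cref{thm:LieAlg-New} already gives commutativity, with no analysis of the homotopy $h'_\theta$ whatsoever. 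Note that your own stated inputs essentially contain this shortcut: vanishing of the reduced $E_2^{\geq 1}$ plus your (correct) observation that the filtration is finite in each total degree give $F^1=\bigcap_k F^k=0$ directly, making the correction-term induction unnecessary. As written, however, the proposal both misses this reduction and leaves its central step unjustified.
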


\begin{proof}By assumption, the space $X$ is BV exact.
  Then it follows from \cite[Corollary 7.4]{KNWY} that the EMSS collapses at the $E_2$-term. Moreover, the result \cite[Lemma 7.5]{KNWY} implies that $F^p = 0$ for $p \geq 1$. This completes the proof.
\end{proof}

\begin{rem}
Let $M$ be a BV-exact manifold. 
Then the results \cite[Theorem 2.15 and Corollary 2.16]{KNWY} assert that the string bracket $[ \ , \ ]$ on $H_*^{S^1}(LM)$ is a restriction of the loop bracket $\{ \  , \  \}$ on the loop homology $\mathbb{H}_*(LM)$. 
More precisely, we have a commutative diagram
\[
\xymatrix@C50pt@R30pt{
H_*^{S^1}(LM; \K)^{\otimes 2} \ar[d]_{[ \ , \ ]} \ar[r]_-{\cong}^-{\Phi\otimes \Phi} &
(\ker \widetilde{\Delta} \oplus \K[u])^{\otimes 2} \ar[r]^-{(inc. \oplus 0)^{\otimes 2}} \ar[d]_{\pm\{ \ , \ \}}
 &  H_*(LM; \K)^{\otimes 2} \ar[d]_{\text{the loop product}}^{\bullet} \\
H_*^{S^1}(LM; \K)  \ar[r]_-{\cong}^-{\Phi}& (\ker \widetilde{\Delta} \oplus \K[u]) &
H_*(LM; \K), \ar[l]^-{\Delta}
}
\]
where $\pm \{a, b\} := (-1)^{|a|}\{a, b\}$ for $a, b \in \ker \widetilde{\Delta}$, $\deg u =2$, $\Delta$ is the BV operator,  $inc.$ denotes the inclusion and $\Phi$ is the isomorphism described in \cite[Theorem 2.15]{KNWY}.
%
\end{rem}



It seems that the representation $\overline{L}$ has a different property from that for $L$.

\begin{ex} (cf. \cref{ex:CP^2}) We determine explicitly the Lie representation $\overline{L}: \pi_*(\aut X)\otimes \Q \cong
H_*(\Der ({\mathcal M}_X))\to \Der_* (H_{S^1}^*(LX;\Q))$ for a simply-connected space $X$ whose rational cohomology is isomorphic to $\Q[x]/(x^{n+1})$ as an algebra, where  $n\geq 1$.
Let ${\mathcal M}_X$ be the minimal model for $X$. We see that
 ${\mathcal M}_X\cong (\wedge (x,y),d)$ in which $dx=0$ and $dy=x^{n+1}$.
Then the results  \cite[Theorem 2.2]{KY97} and
 \cite[Theorem 0.2]{KY00} yield that $$H^*_{S^1}(LX;\Q )\cong \oplus_{k\geq 0,1\leq j\leq n}\Q\{ \alpha (j,k)\}  \oplus \Q[u] $$
as an algebra, 
where $\alpha(j,k)=[x^{j-1}\bar{x}\bar{y}^k]$.
Moreover, we see that  $$H_*(\Der ({\mathcal M}_X))=\Q\{(y,1), (y,x), \cdots , (y,x^{n-1})\}.
$$
Since
 ${}_a\overline{L}_{(y,x^i) }(x^{j-1}\bar{x}\bar{y}^k)=kix^{j-1}\bar{x}x^{i-1}\bar{x}\bar{y}^{k-1}=0$ for $0< i< n$ and
 ${}_a\overline{L}_{(y,x^i) }(x^{j-1}\bar{x})=0$,
it follows that  ${}_a\overline{L}: H_*(\Der ({\mathcal M}_X))\to \Der_*(H_{S^1}^*(LX;\Q ))$ is trivial.
The space $X$ is formal and especially BV-exact. Thus Corollary \ref{cor:bv} yields that $\overline{L}=0$.
\end{ex}

We conclude this appendix with a brief discussion on the Lie representation $\overline{L}$ for a more general simply-connected space $X$, which is not necessarily BV exact. We consider a behavior of the operator $\overline{L}_{\theta}$ in the EMSS for each element $\theta \in \pi_*(\aut X)\otimes \Q$.
Let $\{E_r^{*,*}, d_r\}$ be the EMSS mentioned above and   $\{F^p\}_{p\geq 0}$ the filtration of the target $H^*_{S^1}(LX;\Q)$ associated with the EMSS.
Then, we have a decomposition
\[
  H^*_{S^1}(LX;\Q) = (H^*_{S^1}(LX;\Q)/F^1) \oplus F^1.
\]
Moreover, it follows that the map $i^* :  H^*_{S^1}(LX;\Q) \to H^*(LX;\Q)$ defined by the inclusion of the fibration
$LX \stackrel{i}{\to}  LX\times_{S^1} ES^1\to BS^1$ induces a monomorphism
\[
  i^* :  H^*_{S^1}(LX;\Q)/F^1 \to H^*(LX;\Q).
\]
We also recall the decomposition of the EMSS
\[
  \{E_r^{*,*}, d_r\} = \bigoplus_{N \in {\mathbb Z}}\{_{(N)}E_r^{*,*}, d_r\} \oplus \{\Q[u], 0\}
\]
introduced in \cite[Section 7]{KNWY}.

The derivation $\overline{L}_\theta$ is well-behaved in the vertical edge of the EMSS while it acts trivially apart from the edge.
As seen in the proof of the proposition below, the Cartan calculus, in particular, the contraction $e$ plays a crucial role in describing the property of  $\overline{L}$ in the EMSS.

\begin{prop}\label{prop:filtrations}
  For each $\theta$ in $\pi_*(\aut X)\otimes \Q$ and $p \geq 1$, $\overline{L}_\theta (F^p) \subset F^{p+1}$.
\end{prop}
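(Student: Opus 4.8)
The plan is to reduce the statement for the geometric Lie derivative $\overline{L}_\theta$ to its algebraic counterpart ${}_a\overline{L}_\theta={}_aL_\theta\otimes 1_{\Q[u]}$ on the cyclic model $\cyclicModel=(\HochschildModel[u],\,d+us)$, and then to exploit the Cartan magic formula that holds on $\HochschildModel$ \emph{before} passing to homology. By \cref{thm:LieAlg-New} we have $({}_a\overline{L}_\theta-\overline{L}_\theta)(F^p)\subset F^{p+1}$ for all $p$, so it suffices to prove ${}_a\overline{L}_\theta(F^p)\subset F^{p+1}$ for $p\geq 1$. Since $\pi_*(\aut{X})\otimes\Q\cong H_*(\Der(\wedge V))$, I may represent $\theta$ by a \emph{cocycle} in $\Der(\wedge V)$, i.e. assume $\delta\theta=0$. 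Recall that $H^*_{S^1}(LX;\Q)\cong H(\cyclicModel)$ and that the filtration $\{F^p\}$ is induced by $\widetilde{F}^p=\HochschildModel\otimes u^p\Q[u]$; in particular every element of $\widetilde{F}^p$ with $p\geq 1$ is divisible by $u$.

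First I would record the chain-level identity on $\cyclicModel$. Extend the contraction $e_\theta$ of \cref{prop:CartanCalculusSullivanModel} to a $\Q[u]$-linear operator, still denoted $e_\theta$. Using that $d+us$ is $\Q[u]$-linear, that $u$ is central and even, and the two relations $[d,e_\theta]=-e_{\delta\theta}$ and $L_\theta=[s,e_\theta]$ from \cref{prop:CartanCalculusSullivanModel}, a direct computation gives
\[
  [\,d+us,\ e_\theta\,]=[d,e_\theta]+u\,[s,e_\theta]=-e_{\delta\theta}+u\cdot{}_a\overline{L}_\theta .
\]
Since $\delta\theta=0$ this reads $u\cdot{}_a\overline{L}_\theta=[\,d+us,\ e_\theta\,]$; the sign bookkeeping here is the only delicate point, as both $d+us$ and $e_\theta$ are of odd degree, so the bracket is an anticommutator.

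Next I would apply this to a class. Let $\xi\in F^p$ with $p\geq 1$, represented by a cocycle $z\in\widetilde{F}^p$, and write $z=uw$ with $w\in\widetilde{F}^{p-1}$. Because $d+us$ is $\Q[u]$-linear and multiplication by $u$ is injective on the free $\Q[u]$-module $\HochschildModel[u]$, the equality $(d+us)z=u\,(d+us)w=0$ forces $(d+us)w=0$. Then $\Q[u]$-linearity of ${}_a\overline{L}_\theta$ together with the identity above yields
\[
  {}_a\overline{L}_\theta(z)=u\cdot{}_a\overline{L}_\theta(w)=[\,d+us,\ e_\theta\,](w)=(d+us)(e_\theta w)+e_\theta\big((d+us)w\big)=(d+us)(e_\theta w).
\]
Hence ${}_a\overline{L}_\theta(z)$ is a coboundary, so ${}_a\overline{L}_\theta(\xi)=0$ and a fortiori ${}_a\overline{L}_\theta(\xi)\in F^{p+1}$. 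Combined with \cref{thm:LieAlg-New}, $\overline{L}_\theta(F^p)\subset {}_a\overline{L}_\theta(F^p)+F^{p+1}=F^{p+1}$, which is the claim.

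The genuine content sits in the operator identity $u\cdot{}_a\overline{L}_\theta=[\,d+us,\ e_\theta\,]$, and this is exactly where the homotopy Cartan calculus — and in particular the contraction $e$ — is indispensable: the magic formula $L_\theta=[s,e_\theta]$ is available on the chain level $\HochschildModel$ (not merely on homology), and the factor $u$ produced by the cyclic differential $d+us$ is what converts this exactness into a filtration shift on the $u$-divisible part of $\cyclicModel$. The main thing to be careful about, beyond signs, is the reduction to a cocycle representative of $\theta$, which is legitimate because both $\overline{L}$ and ${}_a\overline{L}$ factor through $H_*(\Der(\wedge V))$ via $\Phi$; for $p=0$ the argument genuinely breaks down, reflecting that $\overline{L}_\theta$ may act nontrivially on the edge term $F^0/F^1$.
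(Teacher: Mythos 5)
Your proposal is correct and takes essentially the same route as the paper: its proof of \cref{prop:filtrations} likewise combines the chain-level relations of \cref{lem:e-L} (i.e.\ $[e_\theta,d]=0$ and $[s,e_\theta]={}_aL_\theta$ for a cocycle representative $\theta$) into the identity $[e_\theta u^{-1},\,d+us]={}_aL_\theta$ on $\Q^+[u]\cdot\cyclicModel$, concludes $({}_a\overline{L}_\theta)(F^p)=0$ for $p>0$, and then transfers to $\overline{L}_\theta$ via \cref{thm:LieAlg-New}. Your factorization $z=uw$ together with injectivity of multiplication by $u$ on the free module $\HochschildModel[u]$ is simply a careful unpacking of the paper's shorthand $e_\theta u^{-1}$.
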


In order to prove \cref{prop:filtrations},
we recall the Cartan calculus on a Sullivan algebra in \cref{sect:CartanOnSullivan}.
The proof of \cref{prop:CartanCalculusSullivanModel} allows us to obtain
\begin{lem} \label{lem:e-L} For each $\theta \in \pi_*(\aut X)\otimes \Q$, one has $[e_\theta, d] = 0$, $[e_\theta, B] = {}_aL_\theta$.
\end{lem}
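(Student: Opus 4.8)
The plan is to observe that both identities are immediate specializations of the homotopy pre-Cartan relations recorded in \cref{prop:CartanCalculusSullivanModel}. There we verified that $(\Der(\wedge V), \Der(\HochschildModel), e, L, 0, 0)$ is a homotopy Cartan calculus on the mixed complex $(\HochschildModel, \done, \susp)$; in the present notation $B$ denotes the degree $(-1)$ operator $\susp$ of \cref{defn:derivationSullivan}, which through the quasi-isomorphism $\Theta$ of \cref{thm:e_and_L} is Connes' $B$-operator on the Burghelea--Vigué-Poirrier model, and ${}_aL_\theta$ is exactly the derivation $L_\theta$ of \cref{defn:derivationSullivan}. Since an element of $\pi_*(\aut X)\otimes\Q\cong H_*(\Der(\wedge V))$ is represented by a cocycle, I may assume throughout that $\dlie\theta=[\done,\theta]=0$.

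For the first identity I would invoke the pre-Cartan relation $[\done, e_\theta]+e_{\dlie\theta}=0$ established in the proof of \cref{prop:CartanCalculusSullivanModel}. Linearity of $e$ together with $\dlie\theta=0$ gives $e_{\dlie\theta}=0$, whence $[\done, e_\theta]=0$; graded antisymmetry, $[e_\theta,\done]=-(-1)^{\deg{e_\theta}\deg{\done}}[\done,e_\theta]$, then yields $[e_\theta,\done]=0$. For the second identity I would invoke the relation $L_\theta=[\susp, e_\theta]$ from the same proof, that is, the first pre-Cartan identity $\ccl_\theta=\dtwo(\cce_\theta)+\done(\ccs_\theta)+\ccs_{\dlie\theta}$ of \cref{defn:preCartan_generalization} taken with $\ccs=0$ and with the mixed differential of $\Der(\HochschildModel)$ equal to $[\susp,\,\cdot\,]$. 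Writing $B$ for the operator $\susp$ and reordering the bracket by graded antisymmetry produces the asserted $[e_\theta, B]={}_aL_\theta$.

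I expect no genuine difficulty here: the content of the lemma is precisely two of the structural equations of the calculus of \cref{prop:CartanCalculusSullivanModel}, read with $\ccs=\cct=0$ and at a cocycle $\theta$. The only points requiring care are bookkeeping, namely confirming the identification of $B$ with $\susp$ and tracking the Koszul signs that arise when passing between $[\susp, e_\theta]$ and $[e_\theta, B]$; these are absorbed into the sign conventions already fixed for $B$ and for the degree $(-1)$ normalization of $\susp$, and they involve no new computation beyond the one already carried out on $V\oplus\overline{V}$ in the proof of \cref{prop:CartanCalculusSullivanModel}.
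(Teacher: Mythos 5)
Your proposal is correct and is essentially the paper's own proof: the paper deduces the lemma directly from the identities $[\done, e_\theta] + e_{\dlie\theta} = 0$ and $L_\theta = [\susp, e_\theta]$ established in the proof of \cref{prop:CartanCalculusSullivanModel}, with $B = \susp$ on the mixed complex $(\HochschildModel, \done, \susp)$ and $\theta$ represented by a $\done$-cocycle, exactly as you do. One caveat worth recording: graded antisymmetry actually gives $[e_\theta, B] = (-1)^{\deg{\theta}}[\susp, e_\theta] = (-1)^{\deg{\theta}}\,{}_aL_\theta$ (since $\deg{e_\theta} = \deg{\theta}+1$ and $\deg{\susp} = -1$), so the sign you describe as ``absorbed into the conventions'' is genuinely present for odd-degree $\theta$ --- but this imprecision sits in the paper's own statement as well (compare ${}_aL_\eta = [B, e_\eta]$ in \cref{thm:CartanCal}), so your argument is faithful to the source.
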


\begin{proof}[Proof of \cref{prop:filtrations}] By \cref{lem:e-L}, we see that $[e_\theta u^{-1}, d + uB] = {}_aL_\theta$
  in $\Q^+[u]\cdot \cyclicModel$. This implies that $({}_a\overline{L}_{\theta})(F^p) = 0$ for
  $\theta$ in $\pi_*(\text{aut}_1(X))\otimes \Q$ and $p > 0$.
  By virtue of \cref{thm:LieAlg-New}, we have the result.
\end{proof}

\begin{prop}\label{prop:EMSS}
  \begin{enumerate}[wide]
    \item\label{item:EMSS-i}
      For $r \geq 2$ and $n > 1$, there exist morphisms of Lie algebras
      \begin{equation}\label{eq:EMSS}
        \overline{L}_{( \ )} : \pi_n(\aut X)\otimes \Q \to \Der_{n, 0}(E_r^{*, *})  \  \  \ \text{and}
      \end{equation}
      \begin{equation}\label{eq:EMSS_N}
        \overline{L}_{( \ )} : \pi_n(\aut X)\otimes \Q \to \End_{n, 0}({_{(N)}}E_r^{*, *})
      \end{equation}
      for which $\overline{L}_\theta$ is compatible with the differential $d_r$ and respects to the derivation $\overline{L}_\theta$ on $H^*_{S^1}(LX; \Q)$ in the
      $E_\infty$-term for each $\theta \in \pi_n(\aut X)\otimes \Q$. Moreover, up to isomorphism,
      the morphism $\overline{L}_{( \ )}$ of Lie algebras coincides with the map ${}_a\overline{L}_{( \ )}$.
    \item\label{item:EMSS-ii}
      The map $\overline{L}_{( \ )}$ acts trivially on $E_r^{*, q}$ for $q >1$.  As a consequence, for $\theta \in \pi_n(\aut X)\otimes \Q$, one has a commutative diagram
      \[
      \xymatrix@C20pt@R20pt{
      H^*_{S^1}(LX;\Q)/ F^1 \ar[r]^-{i^*} \ar[d]_{\overline{L_\theta}}& H^*(LX;\Q) \ar[d]^{L_\theta} \\
      H^{*-n}_{S^1}(LX;\Q)/ F^1 \ar[r]^-{i^*} & H^{*-n}(LX;\Q).
      }
      \]
  \end{enumerate}
\end{prop}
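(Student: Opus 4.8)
The plan is to build the spectral-sequence operators from the purely algebraic model and only afterwards match them with the geometric $\overline{L}_\theta$. Recall from \cref{sect:preliminaries} that the equivariant model is $\cyclicModel=(\HochschildModel[u], d+uB)$, that $B$ restricts to the derivation $s$ on $\HochschildModel$, and that ${}_a\overline{L}_\theta={}_aL_\theta\otimes 1_{\Q[u]}$. Since ${}_aL_\theta$ is a derivation of $\HochschildModel$ with $[{}_aL_\theta,d]=0$ and $[{}_aL_\theta,s]=0$ (\cref{defn:derivationSullivan}; this identity is already used in the proof of \cref{thm:e_and_L}), its $\Q[u]$-linear extension ${}_a\overline{L}_\theta$ commutes with the total differential $d+uB$ and preserves the filtration $\widetilde{F}^p=\cyclicModel\cdot\Q^{\geq p}[u]$ \emph{exactly}. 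By the standard functoriality of filtered complexes it therefore induces, for every $r\geq 2$, an operator $\overline{L}_\theta$ on $E_r^{*,*}$ commuting with $d_r$. Because $\theta\mapsto {}_aL_\theta$ is a morphism of Lie algebras (\cref{lem:Lie_homo}, \cref{thm:LieAlgL}), these assemble into the morphism of Lie algebras $\pi_n(\aut X)\otimes\Q\to\Der_{n,0}(E_r^{*,*})$. Moreover ${}_aL_\theta$ preserves the word-length grading $\HochschildModel=\bigoplus_k\HochschildModel_{(k)}$ (it sends $v\mapsto\theta v\in\HochschildModel_{(0)}$ and $\bar v\mapsto\pm s\theta v\in\HochschildModel_{(1)}$, and Leibniz propagates this), so restricting to the summands $_{(N)}E_r$ of the Hodge-type decomposition gives the map $\End_{n,0}({}_{(N)}E_r^{*,*})$ — an endomorphism rather than a derivation, since a derivation does not restrict to a single weight.

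To identify this page operator with the geometric one on the $E_\infty$-term, I would invoke \cref{thm:LieAlg-New}, which gives $(\overline{L}_\theta-{}_a\overline{L}_\theta)(F^p)\subset F^{p+1}$. Hence the two operators induce the same map on $E_\infty=\operatorname{gr}H^*_{S^1}(LX;\Q)$. As ${}_a\overline{L}_\theta$ preserves $\widetilde{F}^p$ and already produces the page operators, passing to $E_\infty$ recovers precisely the filtered operator $\overline{L}_\theta$; this settles the ``respects $\overline{L}_\theta$ in the $E_\infty$-term'' clause and the final ``coincides with ${}_a\overline{L}_{(\ )}$'' assertion of \cref{item:EMSS-i}.

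For \cref{item:EMSS-ii} the mechanism is \cref{lem:e-L}: $[e_\theta,d]=0$ and $[e_\theta,B]={}_aL_\theta$. Writing $D=d+uB$ and using that $u,u^{-1}$ are central, one obtains
\[
  [D,\,e_\theta u^{-1}] = u\,u^{-1}\,[B,e_\theta] = \pm\,{}_aL_\theta ,
\]
exhibiting $\overline{L}_\theta$ as a commutator with $D$, i.e.\ as a coboundary in the endomorphism complex of the filtered model. The homotopy $e_\theta u^{-1}$ lowers the $u$-power by one, so it carries $\widetilde{F}^q$ into $\widetilde{F}^{q-1}$ and is a genuine self-homotopy of the positively filtered subcomplex — and hence of the pages — exactly where $q-1\geq 1$, that is for $q\geq 2$. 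On that region the commutator identity forces $\overline{L}_\theta$ to be null on $E_r^{*,q}$ for all $r$, giving triviality for $q>1$. Combined with $\overline{L}_\theta(F^1)\subset F^2$ from \cref{prop:filtrations}, the operator is concentrated on the bottom line $E_\infty^{*,0}=H^*_{S^1}(LX;\Q)/F^1$. Since the edge map $i^*\colon H^*_{S^1}(LX;\Q)/F^1\hookrightarrow H^*(LX;\Q)$ is the bottom-line inclusion for the fibration $LX\xrightarrow{i} LX\times_{S^1}ES^1\to BS^1$, and both $\overline{L}_\theta$ and $L_\theta$ arise from the single adjoint map $ad(u_\theta)$ (equivariantly and non-equivariantly, respectively; see \cref{subsect:geom_bar{L}}), the naturality of $i^*$ yields the asserted commuting square.

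The main obstacle I anticipate is the boundary case $q=1$. There $e_\theta u^{-1}$ drops a positive $u$-power to $u^{0}$, leaving the positively filtered subcomplex, so it is no longer a self-homotopy of the pages; this is precisely why \cref{item:EMSS-ii} is stated for $q>1$ rather than $q\geq 1$, and why it must be reconciled with the strictly weaker $E_\infty$-level statement in \cref{prop:filtrations}. Verifying that $e_\theta u^{-1}$ really commutes through the higher differentials $d_r$ so that the nullhomotopy persists on every page, and keeping the filtration bookkeeping consistent across the page-level and $E_\infty$-level claims, will require the most care.
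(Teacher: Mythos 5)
Your proof is correct, but it runs the paper's argument in the opposite direction. The paper constructs the page operators \emph{geometrically}: the filtration-preserving CDGA model ${\mathcal M}(\overline{v_\theta})$ of $\overline{v_\theta}$ from the proof of \cref{thm:LieAlg-New} induces $\overline{L}_\theta$ on every $E_r$, and the chain homotopy \cref{eq:onE} identifies this operator with ${}_aL_\theta$ already on the $E_1$-term, after which the Lie-morphism property, the restriction to the summands ${}_{(N)}E_r^{*,*}$, and the $E_\infty$-compatibility are all read off from ${}_aL_\theta$. You instead take the algebraic ${}_a\overline{L}_\theta = {}_aL_\theta\otimes 1_{\Q[u]}$ as primary, which makes the $d_r$-compatibility, the Lie-algebra structure (via \cref{lem:Lie_homo} and \cref{prop:CartanCalculusSullivanModel}) and the weight restriction immediate from $[{}_aL_\theta, d]=[{}_aL_\theta, s]=0$, and you defer the geometric comparison to the $E_\infty$-term via the modulo-filtration statement of \cref{thm:LieAlg-New}; this is weaker than the paper's $E_1$-level identification, but it is exactly what the proposition asserts, it makes the final ``coincides with ${}_a\overline{L}$'' clause definitional, and it also yields that the geometric $\overline{L}_\theta$ preserves $\{F^p\}$ so that it acts on the associated graded at all. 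Two points you leave implicit deserve a line each. First, independence of your page operator from the cocycle chosen to represent $\Phi(\theta)$: since $L$ is a morphism of dg Lie algebras, ${}_aL_{\delta h}\otimes 1 = [D, {}_aL_h\otimes 1]$ with ${}_aL_h\otimes 1$ filtration-preserving, so homotopic choices induce equal maps on $E_r$ for $r\geq 1$. Second, the worry you flag about the nullhomotopy ``persisting through $d_r$'' dissolves into routine filtration bookkeeping rather than any commutation with $d_r$: if $x\in\widetilde{F}^q$ with $q\geq 2$ and $Dx\in\widetilde{F}^{q+r}$, then ${}_aL_\theta x = D(e_\theta u^{-1}x)\pm e_\theta u^{-1}(Dx)$ with $e_\theta u^{-1}x\in\widetilde{F}^{q-1}$ and $e_\theta u^{-1}(Dx)\in\widetilde{F}^{q+r-1}\subseteq\widetilde{F}^{q+1}$, which is precisely the condition that the class of ${}_aL_\theta x$ vanishes in $E_r^{*,q}$ for $r\geq 2$ --- in effect what the paper does by invoking the proof of \cref{prop:filtrations} together with \cref{item:EMSS-i}. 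Finally, your closing square obtained from naturality of the fiber inclusion against the $S^1$-equivariant adjoint $ad(u_\theta)$ (the fibration diagram in the proof of \cref{thm:LieAlg-New}) is a slightly more geometric shortcut than the paper's factorization through $E_\infty^{*,0}$, which uses \cref{item:EMSS-i} for the left-hand square and \cref{prop:LieAlg-New} for the right-hand one; your route needs only that $\overline{L}_\theta$ descends to $H^*_{S^1}(LX;\Q)/F^1$, which your citation of \cref{prop:filtrations} supplies.
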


\begin{proof}
  We first observe that the multiplication on $E_r^{*,*}$ induces the map
  ${_{(N)}}E_r^{p, q} \otimes {_{(N')}}E_r^{p', q'} \to {_{(N+ N')}}E_r^{p+p', q+q'}$. The definition of the decomposition gives the result; see the discussion after \cite[Remark 7.1]{KNWY}.

  We use a rational model $\cyclicModel$ for the Borel construction $LX \times_{S^1} ES^1$ described above. Then, for an element $\theta \in \pi_n(\aut X)$,
  the morphism ${\mathcal M}(\overline{v_\theta}) : \cyclicModel \to H^*(S^n;\Q)\otimes \cyclicModel$ of CDGAs in the proof of \cref{thm:LieAlg-New} gives rise to
  a linear map $\overline{L}_\theta : E_r^{p, q} \to E_r^{p-n, q}$ for $p, q \geq 0$. In fact, the map ${\mathcal M}(\overline{v_\theta})$ preserves the filtration which constructs the EMSS. Therefore, we also see that $\overline{L}_\theta$ is compatible with the differential of each term of the EMSS.
  The equality \cref{eq:onE} enables us to deduce that the map $\overline{L}_\theta$ on $E_1^{*,*}$ coincides with the derivation
  ${}_aL_\theta$. The map ${}_aL_{( \ )} : \pi_{*}(\aut X)\otimes \Q \to \Der^{-*, 0}(E_r^{*',*'})$ is a morphism of Lie algebras and the so is
  $\overline{L}_{( \ )}$. This completes the proof of \cref{item:EMSS-i}.

  \cref{item:EMSS-ii} Let $\theta$ be a representative of an element in $\pi_n(\aut X)\otimes \Q$.
  A mentioned in the proof of \cref{prop:filtrations}, it follows that ${}_aL_\theta (x) = 0$ for $x \in E_1^{*, q}$ with $q >1$. In fact, such element $x$ is represented by one in the ideal $\Q^+[u]\cdot \cyclicModel$. Thus, the first half of the assertion of \cref{item:EMSS-ii} follows  from the result \cref{item:EMSS-i}.

  As for the latter half of the assertion, we have a commutative diagram
  \[
    \xymatrix@C20pt@R20pt{
      H^*_{S^1}(LX;\Q)/ F^1 \ar[d]_{\overline{L_\theta}} \ar[r]_-{\cong} \ar@/^12pt/[rr]^{i^*}& E_\infty^{*, 0} \ar[d]^{{}_aL_\theta}  \   \ar@{>->}[r] & H^*(LX;\Q) \ar[d]^{L_\theta} \\
      H^{*-n}_{S^1}(LX;\Q)/ F^1  \ar[r]^-\cong  \ar@/_12pt/[rr]_{i^*} & E_\infty^{*-n, 0}  \   \ar@{>->}[r] & H^{*-n}(LX;\Q).
    }
  \]
  The commutativity of the diagrams containing $i^*$ follows from a property of the EMSS.
  By the definition of ${}_aL_\theta$ and  \cref{prop:LieAlg-New}, we see that the right-hand side diagram is commutative.
  It follows from \cref{item:EMSS-i} that  the left-hand side diagram is commutative. We have the result.
\end{proof}

\begin{cor}\label{cor:cocycles}
  Let $x$ be an element in the image of the derivation $\overline{L}_\theta : E_r^{0, *} \to  E_r^{0, *}$ for some $\theta$. Then $d_r(x) = 0$.
\end{cor}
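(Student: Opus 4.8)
The plan is to exploit two facts about the operator $\overline{L}_\theta$ acting on the cobar-type Eilenberg--Moore spectral sequence that are already in hand: that $\overline{L}_\theta$ is compatible with each differential $d_r$ (\cref{prop:EMSS} \cref{item:EMSS-i}), and that it is annihilated on the columns of positive cobar-filtration degree (\cref{prop:EMSS} \cref{item:EMSS-ii}, which ultimately rests on the homotopy formula ${}_aL_\theta = [e_\theta u^{-1}, d+us]$ extracted from \cref{lem:e-L}). Granting these, the corollary becomes a short bookkeeping argument.

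Concretely, I would write $x = \overline{L}_\theta(y)$ for some $y\in E_r^{0,*}$, where I read the relevant grading as the cobar-filtration degree, i.e.\ the power of $u$ in the model $\cyclicModel$. Since $\overline{L}_\theta = {}_aL_\theta\otimes 1_{\Q[u]}$ is $\Q[u]$-linear, it preserves this filtration degree, so it commutes (up to an immaterial sign) with $d_r$, giving $d_r(x)=d_r(\overline{L}_\theta y)=\pm\,\overline{L}_\theta(d_r y)$. The differential $d_r$ raises the cobar-filtration degree by $r$, so $d_r y$ lies in a column of filtration degree $r\geq 2>1$. By \cref{prop:EMSS} \cref{item:EMSS-ii}, $\overline{L}_\theta$ acts as zero on every such column, whence $\overline{L}_\theta(d_r y)=0$ and therefore $d_r(x)=0$. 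This is the entire argument; it merely transports $x$ through $d_r$ into a region where $\overline{L}_\theta$ vanishes.

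The only genuinely delicate point, and the one I would be careful to justify rather than wave at, is the vanishing of $\overline{L}_\theta$ on the positive-filtration columns at the \emph{finite} page $E_r$ and not merely on $E_\infty$. On $E_1$ the operator is essentially $H({}_aL_\theta)\otimes 1$ and is typically nonzero in every $u$-degree, so the vanishing is forced only after passing to $E_2$, and it is forced by the homotopy $e_\theta u^{-1}$: this homotopy is defined exactly on the subcomplex $\Q^+[u]\cdot\cyclicModel$ of positive $u$-degree and lowers the filtration by one, and a filtration-lowering nullhomotopy of a filtration-preserving chain map kills the induced map on $E_r$ for all $r\geq 2$. Because $E_{r+1}^{p,*}$ is a subquotient of $E_r^{p,*}$ in the same column, the vanishing then propagates to all later pages by induction on $r$. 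This mechanism is precisely what is recorded in \cref{prop:filtrations} and \cref{prop:EMSS} \cref{item:EMSS-ii}, so in the write-up I would simply cite those statements; if a self-contained proof is preferred, I would insert this one-line homotopy-plus-subquotient observation. Everything else is formal.
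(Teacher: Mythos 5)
Your proposal is correct and takes essentially the same route as the paper: the paper's proof of \cref{cor:cocycles} is precisely the two-step observation that $\overline{L}_\theta$ commutes with $d_r$ (\cref{prop:EMSS} \cref{item:EMSS-i}) and that $d_r$ carries $E_r^{0,*}$ into columns of filtration degree $r\geq 2$, where $\overline{L}_\theta$ acts trivially by \cref{prop:EMSS} \cref{item:EMSS-ii}, so $d_r(x)=d_r(\overline{L}_\theta y)=\pm\overline{L}_\theta(d_r y)=0$. Your closing paragraph on why the vanishing holds at finite pages (the filtration-lowering nullhomotopy $e_\theta u^{-1}$ from \cref{lem:e-L}, propagated through subquotients) is a justification of the cited \cref{prop:EMSS} \cref{item:EMSS-ii}, which the paper likewise disposes of by citation, so nothing further is needed.
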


\begin{proof} The operation $\overline{L}_\theta$ is compatible with the differential $d_r$.
  \cref{prop:EMSS} \cref{item:EMSS-ii} implies the result.
\end{proof}

\bibliographystyle{alpha_plainsort}
\bibliography{bibliography}

\end{document}